\newtheorem{thm}{Theorem}[section]
\newtheorem{lemma}[thm]{Lemma}
\newtheorem{prop}[thm]{Proposition}
\newtheorem{cor}[thm]{Corollary}
\newtheorem{question}[thm]{Question}
\newtheorem{definition}[thm]{Definition}
\theoremstyle{definition}
\newtheorem{example}[thm]{Example}
\newtheorem{remark}[thm]{Remark}
\def\bbs{{\mathbb{S}}}
\def\yshort{\ytableaushort}
\def\rev{\operatorname{rev}}
\def\trace{\operatorname{trace}}
\def\rank{\operatorname{rank}}
\def\diag{\operatorname{diag}}
\def\rev{\operatorname{rev}}
\def\gl{GL}
\def\ZZ{{\mathbb{Z}}}
\def\NN{{\mathbb{N}}}
\def\schurfunctor{{\mathbb{S}}}
\newcommand{\kk}{\mathbf k}
\renewcommand{\aa}{\mathbf a}
\newcommand{\bb}{\mathbf b}
\newcommand{\cc}{\mathbf c}
\newcommand{\pp}{\mathbf p}
\newcommand{\qq}{\mathbf q}
\newcommand{\xx}{\mathbf x}
\def\Ext{\operatorname{Ext}}
\def\ext{\operatorname{Ext}}
\def\Tor{\operatorname{Tor}}
\def\tor{\operatorname{Tor}}
\def\Hom{\operatorname{Hom}}
\def\ch{\operatorname{ch}}
\def\coker{\operatorname{coker}}
\def\im{\operatorname{im}}
\def\rows{\operatorname{rows}}
\def\cols{\operatorname{cols}}
\def\specht{\mathcal{S}}
\def\PP{{\mathcal{P}}}
\def\HHH{{\mathcal{H}}}
\def\sgn{\operatorname{sgn}}
\def\cone{\operatorname{cone}}
\def\kk{{\mathbf{k}}}
\def\symm{{\mathfrak{S}}}
\def\fancyR{\mathscr{R}}
\def\fancyA{\mathscr{A}}
\def\fancyB{\mathscr{B}}
\def\xx{{\mathbf{x}}}
\def\aa{{\mathbf{a}}}
\def\bb{{\mathbf{b}}}
\DeclarePairedDelimiter\abs{\lvert}{\rvert}%
\newcommand{\ra}{\rightarrow}
\renewcommand{\hom}{\operatorname{Hom}}
\newcommand{\veralg}[1]{{S^{(#1)}}}
\newcommand{\veralgplus}[1]{{S_+^{(#1)}}}
\newcommand{\vermod}[2]{{S^{({\geq #2},{#1})}}}
\title[Equivariant resolutions over Veronese rings]{Equivariant resolutions over Veronese rings}
\author{Ayah Almousa}
\author{Michael Perlman}
\author{Alexandra Pevzner}
\author{Victor Reiner}
\email{almou007@umn.edu, mperlman@umn.edu, pevzn002@umn.edu, reiner@umn.edu}
\address{School of Mathematics, University of Minnesota, Minneapolis, MN 55455}
\author{Keller VandeBogert}
\email{kvandebo@nd.edu}
\address{Department of Mathematics, Notre Dame, IN 46556 }
\keywords{syzygy, Veronese, Koszul, Schur functor, ribbon, skew hook, Hamel-Goulden}
\subjclass{13D02, 
05E40
}
\begin{document}

\begin{abstract}
Working in a polynomial ring $S=\kk[x_1,\ldots,x_n]$
where $\kk$ is an arbitrary commutative ring with $1$, we consider the $d^{th}$ Veronese subalgebras $R=\veralg{d}$, as well as natural $R$-submodules $M=\vermod{d}{r}$ inside $S$.
We develop and use characteristic-free theory of Schur functors associated to ribbon skew diagrams as a tool to construct simple $GL_n(\kk)$-equivariant 
minimal free $R$-resolutions for the quotient ring $\kk=R/R_+$
and for these modules $M$.  These also lead to
elegant descriptions of $\Tor^R_i(M,M')$ for all $i$ and $\hom_R(M,M')$ for any pair of these modules $M,M'$.
\end{abstract}

\maketitle

\section{Introduction}
\label{intro-section}

Let $S=\kk[x_1,\ldots,x_n]$ be a polynomial ring 
where $\kk$ is a commutative ring with unit.
Consider $S$ as a standard graded $\kk$-algebra $S=\oplus_j S_j$ in which $S_j$ are the homogeneous polynomials of degree $j$.  Then $S$ contains for each $d=1,2,3,\ldots$ 
a {\it Veronese subalgebra}
$$
R=\veralg{d}:=
\bigoplus_{j \equiv 0 \bmod{d}} S_j.
$$
Fixing $d$, our goal is
to study various $R$-modules $M$ inside $S$ of the form 
$$
M=\vermod{d}{ r}:=
\bigoplus_{\substack{j \geq r\\j \equiv r \bmod{d}}} S_j
=
S_r \oplus S_{r+d} \oplus S_{r+2d}
\oplus S_{r+3d} \oplus \cdots
$$
where $r=0,1,2,\ldots.$
Our first main result, Theorem~\ref{skew-Schur-functor-theorem} below, describes a (generally infinite) minimal\footnote{We do not assume $\kk$ is a field, so ``minimal" here  means the differentials have all entries in $R_+=\vermod{d}{d}$.} $R$-free resolution for each such $M$ as an $R$-module. When $\kk$ is a field, it is known (see Section~\ref{tor-koszulity-subsection} below) that $M$ always has a {\it linear} $R$-free resolution\footnote{That is, after rescaling the grading of $R=\veralg{d}$ to make its generators of degree $1$ (not degree $d$) and shifting the grading on $M=\vermod{d}{ r}$
to make its $R$-module generators of degree $0$ (not degree $r$).}
\begin{equation}
    \label{generic-MFR}
0 \leftarrow M 
\leftarrow R^{\beta_0}
\leftarrow R(-1)^{\beta_1}
\leftarrow R(-2)^{\beta_2}
\leftarrow \cdots
\end{equation}
However, we will aim for more precise {\it equivariant} descriptions using the action of the 
{\it general linear group} $GL(V)$ on the {\it symmetric algebra}
$$
S=\kk[x_1,\ldots,x_n]=S(V),
$$
in which $V=\kk^n$ is a free $\kk$-module
with $\kk$-basis $x_1,\ldots,x_n$, so that $S_j \cong S^j(V)$, the $j^{th}$ symmetric power of $V$.  We should note that, by contrast, there has been more extensive study of equivariant descriptions of
the {\it finite} free resolutions for both $R$ itself and the modules $M$ over the {\it polynomial} ring $S(S^d(V))$; see \cite{bruce2020conjectures, GrecoMartino,JPWmatrices,  OttavianiPaoletti,reiner2000minimal, sam2014derived}

Our results use polynomial representations of $GL(V)$, including the {\it Schur functors} $V \mapsto \schurfunctor^\lambda(V)$ corresponding to number partitions $\lambda$, and {\it skew Schur functors} $V \mapsto \schurfunctor^{\lambda/\mu}(V)$ associated to a {\it skew diagram} $\lambda/\mu$; see
 Akin, Buchsbaum, and Weyman \cite{akin1982schur}, Macdonald \cite[Appendix I.A]{Macdonald}.  Certain skew diagrams play an important role here, namely  the {\it ribbon} (= {\it skew/rim hook} = {\it border strip}) diagrams
 that we denote $\sigma(\alpha)$, whose row sizes from bottom to top are specified by an (ordered) {\it composition} $\alpha=(\alpha_1,\alpha_2,\ldots,\alpha_\ell)$, with one column of overlap between consecutive rows.  For example, 
$\sigma((3,1,1,2,4))$ is this diagram:
$$
\ytableausetup{boxsize=0.5em}
\begin{ytableau} 
\none& \none& \none & & & &\\ 
 \none& \none&  & \\  
 \none& \none&   \\  
 \none& \none&    \\  
   & &    \\  
\end{ytableau}
$$

\begin{thm}
\label{skew-Schur-functor-theorem}
Fix $d, r \geq 1$ and let $R=\veralg{d}$ and $M=\vermod{d}{ r}$ as above. Then one has an explicit $GL(V)$-equivariant minimal $R$-free resolution of $M$ of the form
$$
0 \leftarrow M 
\leftarrow R \otimes_\kk \schurfunctor^{(r)}(V)
\leftarrow R \otimes_\kk \schurfunctor^{\sigma(d,r)}(V)
\leftarrow R \otimes_\kk \schurfunctor^{\sigma(d,d,r)}(V)
\leftarrow R \otimes_\kk \schurfunctor^{\sigma(d,d,d,r)}(V)
\leftarrow \cdots
$$
whose $i^{th}$ resolvent
$R \otimes_\kk \schurfunctor^{\sigma(d^i,r)}(V)$
has $R$-basis elements in degree $di+r$. 
\end{thm}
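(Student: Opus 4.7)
The plan is to construct the complex and its differentials explicitly from natural transformations of ribbon Schur functors, verify $\partial^2 = 0$ and minimality by inspection, and then prove exactness. The cornerstone is the characteristic-free short exact sequence of ribbon Schur functors
\begin{equation*}
0 \to \bS^{\sigma(\alpha_1, \alpha_2, \ldots, \alpha_k)}(V) \to S^{\alpha_1}(V) \otimes \bS^{\sigma(\alpha_2, \ldots, \alpha_k)}(V) \to \bS^{\sigma(\alpha_1 + \alpha_2, \alpha_3, \ldots, \alpha_k)}(V) \to 0
\end{equation*}
obtained by peeling off the bottom row of the ribbon. The $k=2$ case recovers the classical identification $\bS^{\sigma(d, r)} \cong \ker(S^d \otimes S^r \twoheadrightarrow S^{d+r})$. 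I take this SES to be one of the tools developed in the paper's characteristic-free framework for ribbon Schur functors.

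Given this peel SES, I specialize to $(\alpha_1, \alpha_2, \ldots, \alpha_k) = (d, d, \ldots, d, r)$ to read off the inclusion $\bS^{\sigma(d^i, r)}(V) \hookrightarrow S^d(V) \otimes \bS^{\sigma(d^{i-1}, r)}(V)$. Composing with multiplication $S^d(V) = R_d \subset R$ defines the differential $\partial_i: R \otimes \bS^{\sigma(d^i, r)}(V) \to R \otimes \bS^{\sigma(d^{i-1}, r)}(V)$, which is automatically $R$-linear, $GL(V)$-equivariant, and degree-preserving. The augmentation $\partial_0: R \otimes \bS^{(r)}(V) \to M$ is the natural multiplication. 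To check $\partial_i \partial_{i+1} = 0$, I iterate the peel SES: the composition factors through the subfunctor $\bS^{\sigma(d, d)}(V) \otimes \bS^{\sigma(d^{i-1}, r)}(V) \subset S^d(V) \otimes S^d(V) \otimes \bS^{\sigma(d^{i-1}, r)}(V)$, and $\bS^{\sigma(d, d)} = \ker(S^d \otimes S^d \twoheadrightarrow S^{2d})$, so the composition dies under the commutative multiplication $S^d \otimes S^d \to S^{2d} \subset R$. Minimality is immediate, since the image of $\bS^{\sigma(d^i, r)}$ sits inside $R_d \otimes \bS^{\sigma(d^{i-1}, r)} \subset R_+ \cdot C_{i-1}$.

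For exactness, the plan is to check it one internal degree at a time. In degree $r + \ell d$ the complex restricts to the finite complex
\begin{equation*}
0 \to \bS^{\sigma(d^\ell, r)}(V) \to S^d(V) \otimes \bS^{\sigma(d^{\ell-1}, r)}(V) \to \cdots \to S^{\ell d}(V) \otimes \bS^{(r)}(V) \to S^{\ell d + r}(V) \to 0,
\end{equation*}
whose Euler characteristic of characters vanishes by a Jacobi-Trudi calculation for the ribbon $\sigma(d^\ell, r)$. I would prove exactness by induction on $\ell$, splicing the peel SESs (both the peel-bottom version and the peel-top variant $0 \to \bS^{\sigma(d^i, r)} \to \bS^{\sigma(d^i)} \otimes S^r \to \bS^{\sigma(d^{i-1}, d+r)} \to 0$) and running a standard homological diagram chase. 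Equivalently, one can identify the Tor groups directly: the short exact sequence $0 \to \vermod{d}{r+d} \to \vermod{d}{r} \to S^r(V) \to 0$ and its long exact sequence in $\Tor^R(-,\kk)$, combined with the resolution of $\kk$ as $R$-module (the $r = d$ case with the augmentation $R \twoheadrightarrow \kk$ prepended, taken as a base case), inductively yields $\Tor^R_i(M, \kk) \cong \bS^{\sigma(d^i, r)}(V)$; matching Betti ranks with the constructed complex then finishes the argument.

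The main obstacle is executing the exactness step in a characteristic-free setting. Over a field of characteristic zero, every short exact sequence of $GL(V)$-representations splits, so both the peel SES and the $\Tor$ long exact sequence reduce to direct-sum computations and Pieri bookkeeping. For arbitrary commutative $\kk$, however, extensions between Schur functors are nontrivial and must be tracked carefully, which is precisely where the paper's characteristic-free theory of ribbon Schur functors and the Hamel--Goulden style decompositions earn their keep.
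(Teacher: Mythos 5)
Your construction of the complex, its differentials, the verification of $\partial^2 = 0$, and the minimality argument all match the paper's approach, and your ``peel SES'' is precisely the paper's Proposition~\ref{concatenation-near-concatenation-prop} specialized to $\alpha = (\alpha_1)$, $\beta = (\alpha_2,\ldots,\alpha_k)$. The gap is in the exactness step, which you yourself flag as the obstacle, and I want to name concretely what is missing.

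In the paper the mechanism for exactness is Lemma~\ref{lem:ribbonCxLemma}, whose part (i) identifies
$\ker(\partial^\alpha)_{p+|\alpha|} \cong \bbs^{\sigma(p,\alpha)}$
by writing $\partial^\alpha$ as an injection followed by a surjection and then invoking
Proposition~\ref{prop:intersectingSchurs}: for three compositions one has
$\bbs^{\sigma(\alpha,\beta,\gamma)} = (\bbs^{\sigma(\alpha,\beta)}\otimes_\kk\bbs^{\sigma(\gamma)}) \cap (\bbs^{\sigma(\alpha)}\otimes_\kk\bbs^{\sigma(\beta,\gamma)})$
inside the triple tensor product. That intersection statement is not an elementary consequence of the two peel SESs; in the paper it is extracted from exactness in two spots of the $\ell = 3$ Hamel--Goulden co-resolution (Theorem~\ref{lem:HGcatComplex}, displayed as~\eqref{lift-of-HG-for-n=3}). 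Your ``splicing the peel SESs and running a standard homological diagram chase'' does not surface this lemma, and over a general commutative base $\kk$ the diagram chase cannot be made to go through without it, precisely because the intermediate extensions are not split.

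Your second route has a separate problem: you take ``the resolution of $\kk$ as $R$-module (the $r=d$ case~\dots) as a base case'', but the $r = d$ case is itself an instance of the theorem, so this is circular unless you supply an independent argument for it. One could try to repair this by citing Koszulity from the literature (Barcanescu--Manolache for $R$, Aramova--Barcanescu--Herzog plus truncation for $M$) to pin down the graded Betti \emph{ranks}, and then use a rank-matching argument against the minimal constructed complex; this would also require separately establishing exactness at homological degree $0$, which happily only needs the $\ell = 2$ peel SES. But even with that repair, the equivariant identification $\Tor_i^R(M,\kk) \cong \bbs^{\sigma(d^i,r)}(V)$ as a $GL(V)$-module over arbitrary $\kk$ does not follow from characters alone, and the long exact sequence in $\Tor$ that you propose produces connecting maps whose vanishing you would need to check. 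In short: the skeleton is right, but the crucial characteristic-free ingredient you are missing is the three-fold intersection property of ribbon Schur functors, which is where the paper's Hamel--Goulden co-resolution does real work.
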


\begin{cor}
\label{main-tor-corollary}
In the setting of Theorem~\ref{skew-Schur-functor-theorem}, 
$\Tor_i^R (M,\kk)_j$ vanishes for $j\neq di+r$, and
$$
\Tor_i^R (M,\kk)_{di+r}
\cong \schurfunctor^{\sigma(d^i,r)}(V),
$$
as a polynomial $GL(V)$-representation.
\end{cor}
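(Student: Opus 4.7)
The corollary is essentially an immediate consequence of Theorem~\ref{skew-Schur-functor-theorem}, so the plan is short. The approach is to compute $\Tor^R_i(M,\kk)$ by applying $-\otimes_R \kk$ to the explicit $GL(V)$-equivariant $R$-free resolution of $M$ produced by that theorem, and then to invoke minimality to see that every differential becomes zero.

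More concretely, first I would take the resolution
$$
0 \leftarrow M \leftarrow R\otimes_\kk \schurfunctor^{(r)}(V) \leftarrow R\otimes_\kk \schurfunctor^{\sigma(d,r)}(V) \leftarrow R\otimes_\kk \schurfunctor^{\sigma(d,d,r)}(V) \leftarrow \cdots
$$
and apply the functor $-\otimes_R \kk$. Since all modules $R\otimes_\kk \schurfunctor^{\sigma(d^i,r)}(V)$ are free $R$-modules tensored with a $GL(V)$-representation, this yields a complex whose $i^{th}$ term is $\kk\otimes_\kk \schurfunctor^{\sigma(d^i,r)}(V) = \schurfunctor^{\sigma(d^i,r)}(V)$. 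Because the resolution is minimal in the sense spelled out in the footnote, each differential has all matrix entries in $R_+$. These entries act by zero on $\kk=R/R_+$, so every map in the complex $(-\otimes_R\kk)$ vanishes identically. Hence the homology is just the sequence of terms, giving the stated isomorphism of polynomial $GL(V)$-representations.

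For the grading statement, I would read off directly from Theorem~\ref{skew-Schur-functor-theorem} that the $R$-basis elements of the $i^{th}$ resolvent are in internal $S$-degree $di+r$, so this concentration carries over to $\Tor^R_i(M,\kk)_j$, which therefore vanishes for $j\neq di+r$ and equals $\schurfunctor^{\sigma(d^i,r)}(V)$ when $j=di+r$.

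Since everything is transparent given the theorem, there is really no main obstacle here: the only thing to verify is that the minimality convention of the paper (entries in $R_+$, without assuming $\kk$ is a field) is precisely what is needed to force the differentials in $-\otimes_R\kk$ to vanish, and that the $GL(V)$-equivariance of the resolution passes through the tensor product so that the resulting identification is an isomorphism of $GL(V)$-representations rather than just of $\kk$-modules.
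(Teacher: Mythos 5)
Your proposal is correct and is precisely the standard argument the paper is implicitly invoking when it presents the corollary as an immediate consequence of Theorem~\ref{skew-Schur-functor-theorem}: apply $-\otimes_R\kk$ to the minimal $GL(V)$-equivariant resolution, use the fact that minimality (entries in $R_+$) kills all the differentials, and read off the terms together with their degree shifts. Nothing more is needed, and your remark that the paper's notion of minimality (over a not-necessarily-field $\kk$) is exactly what makes the differentials vanish after tensoring is the right thing to flag.
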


\begin{example}\label{ex: complex for r=4, d=3}
Taking $d=3$ and $r=4$, 
so that $R=\veralg{3}$ and $M=\vermod{3}{ 4}$, the
$R$-free resolution of $M$ has the form
$$
R \otimes_\kk \schurfunctor^{
\ytableausetup{boxsize=0.3em}
\begin{ytableau}
\, & & & 
\end{ytableau}}(V)
\leftarrow R \otimes_\kk \schurfunctor^{
\ytableausetup{boxsize=0.3em}
\begin{ytableau} 
 \none &\none & & & &\\ 
  & &  
\end{ytableau}
}(V)
\leftarrow R \otimes_\kk \schurfunctor^{
\ytableausetup{boxsize=0.3em}
\begin{ytableau} 
\none& \none& \none &\none & & & &\\ 
 \none& \none&  & & \\  
   & &     
\end{ytableau}
}(V)
\leftarrow \cdots
$$
\end{example}

\begin{example}
We intentionally
allow for the case where $d=1$, so that $R=\veralg{1}=S$, and
$M=\vermod{1}{r}=(x_1,x_2,\ldots,x_n)^r$.
For example, when $d=1$ and $r=4$, 
so that $R=\veralg{1}=S$, then the $S$-free resolution of $M=\vermod{1}{4}=(x_1,x_2,\ldots,x_n)^4$ 
looks like 
$$
S \otimes_\kk \schurfunctor^{
\ytableausetup{boxsize=0.3em}
\begin{ytableau}
\,& & &  
\end{ytableau}}(V)
\leftarrow S \otimes_\kk \schurfunctor^{
\ytableausetup{boxsize=0.3em}
\begin{ytableau} 
\, & & & \\ 
    \\  
\end{ytableau}
}(V)
\leftarrow S \otimes_\kk \schurfunctor^{
\ytableausetup{boxsize=0.3em}
\begin{ytableau} 
\, & & & \\ 
    \\  
    \\     
\end{ytableau}
}(V)
\leftarrow S \otimes_\kk \schurfunctor^{
\ytableausetup{boxsize=0.3em}
\begin{ytableau} 
\, & & & \\ 
    \\  
    \\   
    \\
\end{ytableau}
}(V)
\leftarrow \cdots
$$
This recovers $GL(V)$-equivariant $S$-resolutions of powers $(x_1,\ldots,x_n)^r$ of the irrelevant ideal $S_+$ discussed by  Buchsbaum and Rim \cite{BuchsbaumRim}, Buchsbaum and Eisenbud \cite{buchsbaum1975generic},
and Srinivasan \cite{Srinivasan};  see also Kustin \cite[\S4,5]{kustin2016canonical}.
For $r=1$, it gives a Koszul complex resolving $(x_1,\ldots,x_n)$.
\end{example}

\begin{example}
\label{Veronese-maximal-ideal-example}
Note that since Theorem~\ref{skew-Schur-functor-theorem} 
assumes $r \geq 1$, it deliberately excludes the case $r=0$, where the $R$-free resolution of the module $M=\vermod{d}{0}=\veralg{d}=R$  is trivial.
On the other hand, taking $r=d$,
one has 
$$
M=\vermod{d}{ d}=\veralgplus{d}=R_+,
$$
the irrelevant ideal of $R=\veralg{d}$ whose quotient defines the module $\kk=R/R_+$.  Thus the $R$-free resolution of $M$ also gives the resolution of $\kk$, and 
$\Tor_i^R(\kk,\kk) \cong \Tor_{i-1}^R(R_+,\kk)$.
\end{example}

Building up to the proof of 
 Theorem~\ref{skew-Schur-functor-theorem},
Section~\ref{rep-theory-preliminaries-section}
reviews equivariant resolutions, polynomial $GL(V)$-representations, and skew Schur functors.
Section~\ref{ribbon-schur-functor-section} then
derives some special properties of skew Schur functors of ribbon shape that are crucial for our proof. In particular, given a sequence of ribbons, we construct in Definition \ref{def: Hamel-Goulden-categorification} a right resolution of the Schur functor associated to the \textit{concatenation} of the ribbons in terms of Schur functors associated to various \textit{near-concatenations} of the ribbons. This complex, which may be of independent interest, turns out to be a categorification of a special case of the Hamel-Goulden determinantal identity (see \cite{HamelGoulden}) for skew Schur functions. The modules and differentials of this complex are simple to describe, and analysis of a special case of this complex gives us an essential tool for computing the modules $\Tor_i^R(M,M')$.

The maps in the resolution turn out to be part of an unexpected complex of $S$-modules $(\fancyR_\bullet,\partial)$, dubbed the {\it complex of ribbons} in Section~\ref{ribbon-complex-subsection}.  The differential in the complex of ribbons is the restriction of a simple tensor-degree-lowering map $\partial$ on $S \otimes_\kk T_\kk(S)$, where
here $T_\kk(S)$ is a tensor algebra.  However, this $\partial$ is {\it not} a differential $(\partial^2 \neq 0)$ on all $S \otimes_\kk T_\kk(S)$; rather it only satisfies
$\partial^2=0$ when restricted to the complex of ribbons $\fancyR_\bullet$.
After proving Theorem~\ref{skew-Schur-functor-theorem}
in Section~\ref{resolution-subsection},
the next two subsections discuss a few of its corollaries, including a symmetric function identity (Section~\ref{symmetric-function-identity-section}), and a poset homology calculation
(Section~\ref{poset-homology-section}).

A striking feature of the minimal resolution
in Theorem~\ref{skew-Schur-functor-theorem} is its uniformity, both in the resolvents and the form of its differentials. This uniformity is utilized to great effect in Section~\ref{tor-and-ext-between-modules-section}, where we equivariantly describe 
$\Tor_i^R(M,M')$ and also $\hom_R(M,M')$  
for $R=\veralg{d}$ and all $M,M'$ of the form $\vermod{d}{ r}, \vermod{d}{ r'}$. Even at the level of the standard {\it un-derived} functors $M \otimes_R M'$ and $\hom_R (M,M')$, 
one already sees evidence of the consistent structure 
for these modules in the next two results, proven in Sections~\ref{tensor-section} and \ref{hom-section}.

\begin{thm}\label{thm: tensor-product} 
Fix $d,r,r' \geq 1$ and let $R=\veralg{d}$ with the three $R$-modules 
$$
\begin{aligned}
M&=\vermod{d}{ r},\\
M'&=\vermod{d}{ r'}, \\
M''&=\vermod{d}{ r''}, \quad \text{ where }r''=r+r'.
\end{aligned}
$$
\begin{itemize}
\item[(i)]
The multiplication map $M \otimes_R M' \overset{\varphi}{\rightarrow} M''$ gives rise to a $GL(V)$-equivariant short exact sequence of $R$-modules 
$$
0 \to \bbs^{\sigma(r,r')} (V)(-r'') 
\to 
M \otimes_R M' 
\to M'' \to 0
$$
with the $R$-module 
$\bbs^{\sigma(r,r')} (V)(-r'')$ concentrated in degree $r''$,
annihilated by $R_+$.
\item[(ii)] The sequence splits as $R$-modules, giving an $R$-module
isomorphism
$$
M \otimes_R M' \cong M''
\oplus \schurfunctor^{\sigma(r,r')}(V)(-r'').
$$
\item[(iii)]
When $\binom{r+r'}{r}$ lies in $\kk^\times$,
the sequence also splits as
$\gl (V)$-representations.
\end{itemize}
\end{thm}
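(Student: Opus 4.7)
The plan is to prove Part~(i) directly and then derive Parts~(ii) and~(iii) as formal consequences.

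\textbf{Part (i).} First I would verify that $\varphi$ is surjective. Since $M$ (resp.\ $M'$) is generated as an $R$-module by its minimal-degree piece $M_r = S^r(V)$ (resp.\ $M'_{r'} = S^{r'}(V)$), the tensor product $M \otimes_R M'$ is generated in degree $r''=r+r'$ by $S^r(V) \otimes_\kk S^{r'}(V)$ (no $R$-bilinear relation lands in the minimal degree). In that degree, $\varphi$ is the usual surjective multiplication $S^r(V) \otimes S^{r'}(V) \twoheadrightarrow S^{r+r'}(V)$, forcing $\varphi$ to be surjective in every degree. The kernel in degree $r''$ is the kernel of this multiplication, which the two-row ribbon Jacobi--Trudi identity $s_{\sigma(r,r')} = h_r h_{r'} - h_{r+r'}$ (equivalently Pieri's rule) identifies with $\bbs^{\sigma(r,r')}(V)$.

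It remains to show that $\ker\varphi$ vanishes in every degree $r''+id$ with $i \geq 1$. For this, I would apply $- \otimes_R M'$ to the first two terms of the minimal resolution of $M$ from Theorem~\ref{skew-Schur-functor-theorem}, producing
\[
M \otimes_R M' \;=\; \operatorname{coker}\!\Bigl(\bbs^{\sigma(d,r)}(V) \otimes_\kk M' \;\longrightarrow\; \bbs^{(r)}(V) \otimes_\kk M'\Bigr),
\]
with the map induced from the canonical inclusion $\bbs^{\sigma(d,r)}(V) \hookrightarrow S^d(V) \otimes S^r(V)$ by letting $S^d(V)$ act on $M'$ through its $R$-module structure; explicitly $(s \otimes y) \otimes m' \mapsto y \otimes (sm')$. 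The problem in degree $r''+id$, $i \geq 1$, thus reduces to the $GL(V)$-equivariant identity
\[
\operatorname{image}\!\Bigl(\bbs^{\sigma(d,r)}(V) \otimes_\kk S^{r'+(i-1)d}(V) \to S^r(V) \otimes_\kk S^{r'+id}(V)\Bigr) \;=\; \bbs^{\sigma(r, r'+id)}(V),
\]
where the right-hand side is the kernel of multiplication onto $S^{r+r'+id}(V)$. This identification is the main obstacle of the proof. I expect to prove it using the ribbon Schur functor machinery of Section~\ref{ribbon-schur-functor-section}: both sides are $GL(V)$-subrepresentations of $S^r(V) \otimes_\kk S^{r'+id}(V)$, and the three-row ribbon $\sigma(r, d, r'+(i-1)d)$ interpolates between them via the Hamel--Goulden categorification of Definition~\ref{def: Hamel-Goulden-categorification}, yielding the equality integrally rather than merely at the character level. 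Once established, this gives $(M \otimes_R M')_{r''+id} \cong S^{r''+id}(V) = M''_{r''+id}$ for all $i \geq 1$, completing~(i).

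\textbf{Parts (ii) and (iii).} Part~(ii) follows formally from~(i). Because $\bbs^{\sigma(r,r')}(V)(-r'')$ is concentrated in degree $r''$ and annihilated by $R_+$, an $R$-linear retraction onto it reduces to a $\kk$-linear retraction of the inclusion $\bbs^{\sigma(r,r')}(V) \hookrightarrow S^r(V) \otimes_\kk S^{r'}(V)$ extended by zero in the other degrees; such a retraction exists because $S^{r+r'}(V)$ is a free $\kk$-module, so the short exact sequence splits over $\kk$, and the extension-by-zero is automatically $R$-linear. For Part~(iii), the divided comultiplication $S^{r+r'}(V) \to S^r(V) \otimes_\kk S^{r'}(V)$ from the Hopf-algebra structure on $\Sym V$ is $GL(V)$-equivariant and composes with multiplication to $\binom{r+r'}{r} \cdot \operatorname{id}$; when this scalar is a unit, rescaling produces an equivariant section, upgrading~(ii) to a $GL(V)$-equivariant splitting.
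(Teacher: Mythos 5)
Your Part (i) takes a genuinely different route from the paper, and as you yourself flag, it has a gap. The paper's proof of (i) does not invoke the free resolution at all: Lemma~\ref{lem: unique-multidegrees} shows by a direct gcd-manipulation on monomials that $\varphi$ is a $\kk$-module isomorphism in every multidegree $\gamma$ with $|\gamma| > r''$, so $\ker\varphi$ is concentrated in degree $r''$, where it is then identified via Proposition~\ref{concatenation-near-concatenation-prop}. You instead feed the resolution of Theorem~\ref{skew-Schur-functor-theorem} into $-\otimes_R M'$ and reduce to the image-equals-kernel identity in degree $r''+id$. That identity is correct --- it is precisely Lemma~\ref{lem:ribbonCxLemma}(ii) with $\alpha=(r)$, $q=d$, $p=r'+id$ (valid since $d \leq r'+id$ for $i\geq 1$), together with part (i) of that lemma for the isomorphism type --- and the Hamel--Goulden machinery you gesture at is indeed what the paper uses to prove that lemma. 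But ``I expect to prove it'' is the gap: you have not carried out the argument, and the interpolating ribbon you name, $\sigma(r,d,r'+(i-1)d)$, is not quite the one that appears in the actual proof of Lemma~\ref{lem:ribbonCxLemma}. If you cite or reprove that lemma, your approach works, but it carries far more machinery than the elementary multidegree argument of Lemma~\ref{lem: unique-multidegrees}.

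Your Part (ii) is correct and in fact cleaner than the paper's. The paper constructs an $R$-linear \emph{section} $\psi: M'' \to M \otimes_R M'$ of the surjection (Lemma~\ref{lem: product-map-split-surjection}), which requires choosing coefficients $c_{\alpha,\beta}$ and verifying $R$-linearity via Lemma~\ref{lem: unique-multidegrees}. You instead produce an $R$-linear \emph{retraction} of the inclusion: a $\kk$-linear retraction of $\bbs^{\sigma(r,r')}\hookrightarrow S^r \otimes_\kk S^{r'}$ exists because the quotient $S^{r+r'}$ is $\kk$-free, and extending it by zero in other degrees is automatically $R$-linear since $R_+$ raises degree while annihilating the target. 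That is a nice observation, though it still depends logically on Part (i) for the fact that $\ker\varphi$ lives only in degree $r''$. Part (iii) takes essentially the paper's route (rescale the comultiplication); the paper spends more care than you do showing the resulting section extends $GL(V)$-equivariantly from degree $r''$ to all of $M''$, again via Lemma~\ref{lem: unique-multidegrees}, but your outline is sound.
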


\begin{thm}
\label{ext-zero-theorem}
Assume $n \geq 2$, so that $S=\kk[x_1,\ldots,x_n]$
is not univariate\footnote{See Proposition~\ref{prop: one-variable-ext-tor} for the simple answer in the univariate case $n=1$.}.  Fix an integer $d \geq 1$, defining $R=\veralg{d}$.
For $r, r' \geq 0$,
consider three $R$-modules 
$$
\begin{aligned}
M&=\vermod{d}{ r},\\
M'&=\vermod{d}{ r'}, \\
M''&=\vermod{d}{ r''},
\end{aligned}
$$
defining
$r'':= r'-r$ if $r \leq r'$,
otherwise if $r>r'$, defining $r''$ to be the unique integer in $[0,d)$ congruent to $r'-r \bmod{d}$.
Then one has a $GL(V)$-equivariant $R$-module isomorphism 
$$
\begin{array}{rcl}
M'' &\longrightarrow &\hom_R (M , M' )\\
m'' &\longmapsto & (m \mapsto m''\cdot m).
\end{array}
$$
\end{thm}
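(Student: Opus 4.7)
The plan is to prove directly that the stated $R$-linear $GL(V)$-equivariant map $\Phi\colon M'' \to \hom_R(M,M')$ is well-defined, injective, and surjective. Well-definedness reduces to matching degree congruences and inequalities: for $m'' \in M''_t$ and $m \in M_j$ one has $j + t \equiv r' \pmod d$ and $j + t \geq r'$, so $m''m \in M'$. Injectivity is immediate, since if $m'' \neq 0$ then $m''\cdot x_1^r \neq 0$ in $S$ (as $x_1^r \in M$ and $x_1$ is a non-zero-divisor in $S$).

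The main work is surjectivity. Given a graded-homogeneous $R$-linear map $\phi\colon M \to M'$ of degree $t$, $\phi$ is determined by its restriction $\phi|_{S_r}$ since $S_r$ generates $M$ as an $R$-module. The key step is the commutation relation
$$
m' \phi(m) = m\,\phi(m') \quad\text{in } S, \quad\text{for all } m, m' \in S_r.
$$
I would establish this by taking $f := x_1^{r(d-1)} \in S$, noting that $fm$ and $fm'$ both lie in $S_{rd} \subseteq R$, and then applying $R$-linearity of $\phi$ to the equation $(fm')\cdot m = (fm)\cdot m' \in M$ to obtain $fm'\phi(m) = fm\phi(m')$; canceling the non-zero-divisor $f$ yields the identity.

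Specializing $m' = x_1^r$ in this relation gives $x_1^r\cdot\phi(m) = m\cdot\phi(x_1^r)$ in $S$ for every $m \in S_r$. Taking $m = x_j^r$ for some $j \geq 2$---which is possible precisely because $n \geq 2$---forces $x_1^r \mid x_j^r\phi(x_1^r)$, and comparing monomial coefficients in the $\kk$-basis of $S$ then gives $x_1^r \mid \phi(x_1^r)$. Writing $\phi(x_1^r) = x_1^r\cdot s$ with $s \in S_t$, the commutation relation yields $\phi(m) = s\cdot m$ for all $m \in S_r$, and hence $\phi = \Phi(s)$ by $R$-linearity of both sides. A short degree analysis---combining $t \equiv r'-r \pmod d$, $t \geq r' - r$, and $t \geq 0$---then places $s$ in $M''$ in either case of the definition of $r''$.

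The main obstacle I anticipate is the divisibility step $x_1^r \mid x_j^r g \Rightarrow x_1^r \mid g$ when $\kk$ is an arbitrary commutative ring, which must be handled without appealing to unique factorization; I would verify it by directly comparing coefficients in the monomial basis of $S$. This is also the unique place where the hypothesis $n \geq 2$ is essential, and for $n = 1$ the conclusion genuinely fails when $r > r'$, producing ``non-multiplicative'' $R$-homomorphisms $M \to M'$ of negative degree.
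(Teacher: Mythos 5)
Your argument is correct, and it takes a genuinely different route from the paper's. The paper proves the theorem by a chain of reductions: a Hom-tensor adjunction lemma (itself relying on the split exact sequence of Theorem~\ref{thm: tensor-product}), a grade/depth vanishing lemma $\ext^1_R(\veralg{d}/(R_+)^i, \vermod{d}{r})=0$ for $0\le r<d$ (proved via Cohen--Macaulayness of $\vermod{d}{r}$ as an $R$-module), the special cases $r=r'$ and $r'=0$ handled by a snake-lemma argument, and finally two inductions on $r$. Your argument is self-contained and elementary, working directly with the monomial basis of $S$. The pivot is the commutation identity $m'\phi(m)=m\,\phi(m')$ on $S_r$, obtained by multiplying into the subring $R$ by $x_1^{r(d-1)}$ and canceling the non-zero-divisor $x_1^{r(d-1)}$; specializing $m'=x_1^r$ and $m=x_j^r$ with $j\ge 2$ then forces $x_1^r\mid\phi(x_1^r)$, and writing $\phi(x_1^r)=x_1^r s$ gives $\phi(m)=sm$ on $S_r$, hence on all of $M$. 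Your monomial-basis verification of $x_1^r\mid x_j^r g \Rightarrow x_1^r \mid g$ over arbitrary $\kk$ is exactly the statement that $x_j$ is a non-zero-divisor on $S/(x_1^r)$, and your degree bookkeeping placing $s$ in $M''$ is correct in both cases of $r''$. Two small remarks. First, you do not actually need to reduce to graded-homogeneous $\phi$: the commutation identity holds for arbitrary $R$-linear $\phi$, and applying the resulting constraint to each homogeneous piece of $s$ places $s$ in $M''$ directly, so the invocation of the graded decomposition of $\hom_R(M,M')$ (a standard fact for $M$ finitely generated, but worth flagging since $\kk$ is not assumed Noetherian) can be skipped entirely. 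Second, what your approach buys is logical independence from Theorem~\ref{thm: tensor-product} and the Cohen--Macaulay/grade machinery; what the paper's buys is a toolkit (the adjunction and grade lemmas) that it reuses to relate the result to Auslander's theorem and, per the closing remark, to study the higher $\ext$ modules.
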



Section~\ref{tor-between-the-modules-subsection}  uses Theorem~\ref{skew-Schur-functor-theorem} to equivariantly describe $\Tor_i^R(M,M')$ for $i \geq 1$, which is even simpler.  Note that since $\kk = R/R_+$ where $R_+=\vermod{ d}{d}$,  dimension-shifting
lets one rephrase Theorem~\ref{skew-Schur-functor-theorem} as
saying that for $M,M'$ being $\vermod{d}{r}, \vermod{d}{d}$, one has 
$
\Tor_{i-1}^R(M,M') \cong \schurfunctor^{\sigma(d,d^{i-1},r)}(V)
$
for $i \geq 1$.
Our last main result elegantly generalizes this.

\begin{thm}
\label{higher-tor-is-all-socle-and-Artinian}
Fix $d,r,r' \geq 1$,
and let $R,M,M'$ denote
$\veralg{d}, \vermod{d}{ r}, \vermod{d}{ r'}$, as usual.
Then for $i \geq 1$, the $R$-module $\Tor^R_i(M,M')$ is annihilated by $R_+$, and as a module over $\kk=R/R_+$, has a $GL(V)$-isomorphism
$$
\Tor^R_i(M,M')
\cong
\schurfunctor^{\sigma(r,d^i,r')}(V).
$$
\end{thm}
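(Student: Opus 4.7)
The plan is to compute $\Tor^R_i(M,M')$ by tensoring $M'$ with the minimal free resolution $F_\bullet$ of $M$ provided by Theorem~\ref{skew-Schur-functor-theorem}, where $F_i = R\otimes_\kk \schurfunctor^{\sigma(d^i,r)}(V)$ has $R$-basis in degree $di+r$. Tensoring over $R$ gives $F_i \otimes_R M' = M' \otimes_\kk \schurfunctor^{\sigma(d^i,r)}(V)$, with $M'$'s generating copy $\schurfunctor^{(r')}(V)$ contributing in total $S$-degree $di+r+r'$.

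First I would use the internal grading to break $F_\bullet \otimes_R M'$ into finite subcomplexes indexed by total degree. In total degree $n = r+r'+md$ with $m\geq 0$, the subcomplex takes the form
\begin{equation*}
0 \to S_{r'}\otimes_\kk\schurfunctor^{\sigma(d^m,r)}(V) \to S_{r'+d}\otimes_\kk\schurfunctor^{\sigma(d^{m-1},r)}(V) \to \cdots \to S_{r'+md}\otimes_\kk \schurfunctor^{(r)}(V) \to 0,
\end{equation*}
with leftmost term in homological position $m$ and rightmost in position $0$. Its differentials are composites of the Pieri-type inclusion $\schurfunctor^{\sigma(d^i,r)}(V) \hookrightarrow \schurfunctor^{(d)}(V) \otimes_\kk \schurfunctor^{\sigma(d^{i-1},r)}(V)$ (the restriction of the $F_\bullet$-differential to generators) with multiplication $S_{r'+(m-i)d}\otimes_\kk S_d \to S_{r'+(m-i+1)d}$ in $S$.

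Next I would identify each such subcomplex with the Hamel--Goulden right resolution of $\schurfunctor^{\sigma(r,d^m,r')}(V)$ from Definition~\ref{def: Hamel-Goulden-categorification}, applied to the sequence of $m+2$ single-row ribbons $\sigma(r),\sigma(d),\ldots,\sigma(d),\sigma(r')$. This categorification provides an exact complex whose kernel at the leftmost slot is $\schurfunctor^{\sigma(r,d^m,r')}(V)$ and which is acyclic at every other position. Summing over all $m\geq 0$, the only nonzero contribution to $\Tor^R_i(M,M')$ for $i\geq 1$ comes from the $m=i$ subcomplex in total degree $di+r+r'$, yielding
\begin{equation*}
\Tor^R_i(M,M') \;\cong\; \schurfunctor^{\sigma(r,d^i,r')}(V),
\end{equation*}
concentrated in a single $R$-degree and therefore annihilated by $R_+$.

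The hard part will be this identification: matching the differentials inherited from the complex of ribbons in Section~\ref{ribbon-complex-subsection} with those of Definition~\ref{def: Hamel-Goulden-categorification}, confirming precise agreement as $GL(V)$-equivariant maps in each graded piece. Once this compatibility is established, the acyclicity built into the Hamel--Goulden categorification simultaneously produces both the homology calculation and the single-degree concentration responsible for the $R_+$-annihilation.
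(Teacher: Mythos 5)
Your setup is sound: tensoring the resolution of $M$ with $M'$, filtering by total internal degree, and describing the resulting degree-$n$ subcomplex and its differentials (Pieri inclusion followed by multiplication in $S$) are all correct, and invoking the Hamel--Goulden categorification is the right instinct. However, the pivotal step — literally identifying the degree-$n$ subcomplex with $\HHH(\underline{\alpha})$ for $\underline{\alpha}=(\sigma(r),\sigma(d),\ldots,\sigma(d),\sigma(r'))$ — is false, and there is no way to patch it into an identification.

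Concretely, taking $\ell=m+2$, the complex $\HHH(\underline{\alpha})$ from Definition~\ref{def: Hamel-Goulden-categorification} has cochain positions $0$ through $m+1$, and its $j$-th term is a direct sum of $\binom{m+1}{j}$ summands $\bbs^{\underline{\alpha}(I)}$, for $2^{m+1}$ summands in total with tensor lengths ranging from $m+2$ down to $1$. Your subcomplex has only $m+1$ terms, each a single two-factor tensor $S_{r'+(m-i)d}\otimes_\kk\schurfunctor^{\sigma(d^i,r)}(V)$. The two are not even quasi-isomorphic: by Theorem~\ref{lem:HGcatComplex}, $\HHH(\underline{\alpha})$ is exact at every positive cochain position (so in particular its last map is surjective), whereas your subcomplex must have nonzero homology at \emph{both} ends — at the right-hand end it computes $(M\otimes_R M')_n$, which for $m\ge 1$ is $S_n\neq 0$ by Theorem~\ref{thm: tensor-product}. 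So the identification cannot hold, and the acyclicity you wanted to import does not apply.

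What actually makes the argument go through is Lemma~\ref{lem:ribbonCxLemma}, applied one internal degree at a time. The graded boundary of the tensored complex is identified with a component $(\partial^\alpha)_{p+|\alpha|}$ in the complex of ribbons; part (i) of that lemma computes its kernel in the minimal degree, yielding the desired skew Schur functor, and part (ii) shows that in every strictly higher degree this kernel equals the image of the preceding differential, so the homology vanishes. The Hamel--Goulden categorification does enter, but only via its length-two and length-three special cases (Propositions~\ref{concatenation-near-concatenation-prop} and~\ref{prop:intersectingSchurs}), which underlie the proof of Lemma~\ref{lem:ribbonCxLemma}; the full complex $\HHH(\underline{\alpha})$ is not what appears. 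To repair the proof, replace the claimed identification with a direct appeal to Lemma~\ref{lem:ribbonCxLemma} at each internal degree. (One minor additional point: since you resolve $M$ rather than $M'$, the lemma produces $\bbs^{\sigma(r',d^i,r)}$; you would then need the reversal isomorphism $\bbs^{\sigma(\alpha)}\cong\bbs^{\sigma(\rev(\alpha))}$ to reach the stated $\bbs^{\sigma(r,d^i,r')}$, or simply resolve $M'$ instead.)
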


\begin{example}
Note that Theorem~\ref{higher-tor-is-all-socle-and-Artinian} is interesting even if $d=1$, so 
$R = S= \kk[x_1 , \dots , x_n]$. It asserts, for instance, that there is a $\gl (V)$-equivariant isomorphism
$$
\tor_3^S ((S_+)^2 , (S_+)^3) \cong \bbs^{{\tiny \ydiagram{1+3,1+1,1+1,1+1,2}}} (V).
$$
\end{example}

\section*{Acknowledgements}
The authors thank Francesca Gandini for helpful conversations, and Darij Grinberg for helpful edits, 
including a shortening of the proof of Proposition~\ref{skew-concatenation-near-concatenation-prop}(iii). They thank an anonymous referee for helpful edits. The first author was partially supported by NSF grant DMS-1745638, third and fourth authors by NSF grant DMS-2053288, and the fifth author by NSF grant DMS-2202871.

\section{Representation theory preliminaries}
\label{rep-theory-preliminaries-section}


\subsection{Some generalities on equivariant resolutions}
\label{equivariant-resolutions-section}

We mention a fact about equivariant resolutions that can be found in Broer, Reiner, Smith and Webb \cite[\S2]{BRSW}.
Let $R$ be a commutative, graded Noetherian $\kk$-algebra, and $G$ a group of $\kk$-algebra automorphisms of $R$.  For
$M$ a Noetherian graded $R$-module with a $G$-action via
$\kk$-module automorphisms, say 
$G$ acts {\it compatibly}\footnote{Alternatively, $M$ is a module for the {\it skew group algebra} $R \#G$; see Leuschke and Wiegand \cite{LeuschkeWiegand}. } on $M$ if $g(r \cdot m)=g(r) g(m)$ for
all $r$ in $R$, $m$ in $M$, $g$ in $G$.

\begin{prop}
\label{BRSW-prop}
In the above setting, one has the following.
If $M,N$ are two graded Noetherian $R$-modules with compatible $G$-actions as above, then $\Tor_i^R(M,N)$ and $\Ext^i_R(M,N)$ for all $i \geq 0$ are also
    graded Noetherian $R$-modules with compatible $G$-actions.
\end{prop}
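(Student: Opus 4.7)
The plan is to construct a $G$-equivariant graded free $R$-resolution $F_\bullet \to M$ and then derive the proposition by tracking the $G$-action through the standard computation of $\Tor$ and $\Ext$ from $F_\bullet$. To build $F_\bullet$ inductively, I would apply the following step starting with $K := M$: locate a graded $G$-stable $\kk$-submodule $W \subseteq K$ which is finitely generated and $\kk$-free in each degree and which generates $K$ as an $R$-module, and then set $F := R \otimes_\kk W$ (with the diagonal $G$-action), so that the multiplication map $F \to K$ is a $G$-equivariant surjection of graded $R$-modules. That the $R$-module structure on $F$ is compatible with the $G$-action uses the hypothesis that $G$ acts on $R$ by $\kk$-algebra automorphisms. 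Because $R$ is Noetherian and $K$ is finitely generated, one may choose $W$ so that each $F_i$ is finitely generated as an $R$-module. The kernel of $F \to K$ is then again a graded Noetherian $R$-module with compatible $G$-action, and iterating yields the resolution $F_\bullet$.

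The main obstacle is producing such a $\kk$-free, $G$-stable $W$ when $\kk$ is an arbitrary commutative ring; in the setting of this paper the syzygy modules are polynomial $GL(V)$-representations, whose graded pieces are automatically finite $\kk$-free modules, and the general case is handled via the technique of \cite[Section~2]{BRSW}. Given such an $F_\bullet$, the complexes $F_\bullet \otimes_R N$ and $\Hom_R(F_\bullet, N)$ inherit $G$-actions (diagonal on tensor products, conjugation on Hom groups) that commute with the $R$-module structure, i.e.\ are compatible. The differentials are $R$-linear and $G$-equivariant, so passing to homology endows $\Tor_i^R(M,N) = H_i(F_\bullet \otimes_R N)$ and $\Ext^i_R(M,N) = H^i(\Hom_R(F_\bullet,N))$ with the structure of graded $R$-modules with compatible $G$-actions. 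For Noetherianity at each fixed $i$, observe that since $F_i$ is a finitely generated graded free $R$-module, both $F_i \otimes_R N$ and $\Hom_R(F_i, N)$ are finite direct sums of graded shifts of $N$, hence Noetherian over $R$; as subquotients of Noetherian $R$-modules are Noetherian, the conclusion follows.
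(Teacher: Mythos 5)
The paper itself does not prove Proposition~\ref{BRSW-prop}; it cites it to Broer, Reiner, Smith and Webb \cite[\S2]{BRSW}, so there is no internal proof to compare against, and your proposal must be judged on its own merits. Its central step---constructing a $G$-equivariant graded free $R$-resolution by repeatedly finding a graded, $G$-stable, $\kk$-free-in-each-degree generating submodule $W$---has a real gap in the stated generality. With $\kk$ an arbitrary commutative ring, $G$ an arbitrary (possibly infinite) group of $\kk$-algebra automorphisms, and the syzygy modules merely Noetherian graded $R$-modules, there is no reason such a $W$ should exist: the graded pieces need not be $\kk$-free, and even if one finds a finite set of homogeneous generators, its $G$-saturation may fail to be finitely generated over $\kk$ when $G$ is infinite. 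You acknowledge this and defer to ``the technique of [BRSW, Section~2],'' but since that is precisely the source the proposition is being cited from, this makes the argument circular rather than a proof. (In the paper's application, where $G=GL(V)$ and everything in sight is a polynomial functor with universally free graded pieces, the $W$ you want does exist, but that is a special feature not available in general.)

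The robust argument, and in effect what \cite{BRSW} do, avoids equivariant resolutions entirely in favor of functoriality. Take \emph{any} graded free $R$-resolution $F_\bullet \to M$ with finitely generated terms, which exists because $R$ is Noetherian and $M$ finitely generated. For $g \in G$, the compatibility condition $g(rm)=g(r)g(m)$ says $g\colon M \to M$ is a $g$-semilinear automorphism. The standard comparison theorem for projective resolutions carries over verbatim to semilinear maps (the differentials of $F_\bullet$ remain $R$-linear, and the lifting step only uses surjectivity onto kernels), so $g$ lifts to a $g$-semilinear chain endomorphism $g_\bullet\colon F_\bullet \to F_\bullet$, unique up to $g$-semilinear chain homotopy. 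Semilinearity is exactly what makes $g_\bullet \otimes g$ well defined on $F_\bullet \otimes_R N$ and $\Hom_R(g_\bullet,g)$ well defined on $\Hom_R(F_\bullet,N)$; for instance $g_\bullet(xr)\otimes g(n) = g_\bullet(x)\otimes g(r)g(n) = g_\bullet(x)\otimes g(rn)$. Passing to homology yields the compatible $G$-actions on $\Tor_i^R(M,N)$ and $\Ext^i_R(M,N)$, and homotopy-uniqueness of the lift ensures $(gh)_\bullet$ and $g_\bullet \circ h_\bullet$ induce the same map on homology, so the group axioms hold. Your Noetherianity argument is correct and can be retained as is: $F_i\otimes_R N$ and $\Hom_R(F_i,N)$ are finite direct sums of graded shifts of $N$, hence Noetherian over the Noetherian ring $R$, and $\Tor$ and $\Ext$ are subquotients.
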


\subsection{Review of polynomial $GL(V)$ representations}
\label{general-linear-review-section}

We will need some of the basic facts on 
polynomial representations of $GL_n(\kk)$,
and polynomial functors.  References are
Akin, Buchsbaum and Weyman \cite{akin1982schur}, Fulton \cite[\S8.2,8.3]{Fulton}, Fulton and Harris \cite[Lec. 6]{FultonHarris}, Gandini \cite{gandini2018resolutions}, Macdonald \cite[Appendix I.A]{Macdonald}, Stanley \cite[App. 7.A.2]{Stanley-EC2}.

Let $V=\kk^n$ be a free $\kk$-module of rank $n$, so that $GL(V) \cong GL_n(\kk)$.
A representation $\varphi: GL(V) \rightarrow GL(U)$ (where $U$ is also a free $\kk$-module)
is {\it polynomial} if the matrix entries $\left[ \varphi(A)_{k,\ell}\right]_{k,\ell=1,2,\ldots}$ are all
polynomial functions in the matrix entries of $A=[a_{ij}]_{i,j=1,2,\ldots,n}$.
If these entries $\varphi(A)_{k,\ell}$ are all homogeneous in $\{a_{ij}\}$ of degree $d$, then one says that $\varphi$ is a {\it homogeneous} polynomial representation of degree $d$.  When no confusion arises, we will again refer to $\varphi$ by $U$, in our standard abuse of notation.  A polynomial representation $U$ has
{\it character}
$$
\ch_U=\ch_U(x_1,\ldots,x_n)=
\trace \varphi(\diag(\xx))
$$
where 
$\diag(\xx)$ is an $n \times n$ diagonal matrix having
eigenvalues $x_1,\ldots,x_n$.
This character $\ch_U$ is a symmetric polynomial in $x_1,x_2,\ldots,x_n$.  However, all of the representations of $GL(V)$ that we consider here are restrictions to $GL(V)$ of {\it polynomial functors} from the category of finite-rank free $\kk$-modules and $\kk$-linear maps to itself.  This means that they are defined for all $n$, and one can regard $ch_U$ as an element of the {\it ring of  symmetric functions} $\Lambda_\kk$ in the
infinite variable set $x_1,x_2,\ldots$, with coefficients in $\kk$.  When $\kk$ is a field of characteristic zero, the $GL(V)$-representation $U$ is determined up to isomorphism by
its character $\ch_U$. 

\begin{remark}
For $\kk$ a field, the polynomial functors that we are considering are {\it strict polynomial functors} in the sense of Friedlander and Suslin \cite[Defn.~2.1]{FriedlanderSuslin}.  Although we will not make use of it, a more general theory of strict polynomial functors over arbitrary commutative rings $\kk$ is described by Krause \cite[Chap.~8]{Krause}.
\end{remark}

Here are some examples of polynomial functors and their characters.
\begin{example}
The $m^{th}$ {\it tensor power} $T^m(V)=V^{\otimes m}$, the $m^{th}$ {\it symmetric power} $S^m(V)$ and the $m^{th}$ {\it exterior power} $\wedge^m(V)$ are all
polynomial functors, with 
$$
\begin{aligned}
\ch_{T^m(V)}&=h_1^m =e_1^m=(x_1+x_2+\cdots)^m,\\
\ch_{S^m(V)}&=h_m,\\
\ch_{\wedge^m(V)}&=e_m.
\end{aligned}
$$
Here $h_m$ is the {\it complete homogeneous} symmetric function, which is the
sum of all monomials of degree $m$ in $x_1,x_2,\ldots$,
and
$e_m$ is the {\it elementary} symmetric function, which is the
sum of all {\it squarefree} monomials of degree $m$ in $x_1,x_2,\ldots$.
\end{example}

\begin{example}\label{schur-functor-example}
For each partition $\lambda$ with $m=|\lambda|:=\sum_i \lambda_i$,
there is a degree $m$ homogeneous polynomial representation of $GL(V)$ called the {\it Schur functor} $\schurfunctor^\lambda(V)$,
with
$$
\ch_{\schurfunctor^\lambda(V)}=s_\lambda,
$$
where $s_\lambda$ is
the {\it Schur function} in 
$x_1,x_2,\ldots$.
When $\kk$ is a field of characteristic zero,  Schur functors $\schurfunctor^\lambda(V)$
give all of the {\it irreducible}
polynomial $GL(V)$-representations.
\end{example}

\begin{example}
\label{schur-functor-example2}
The construction of $\schurfunctor^\lambda(V)$ is a special
case of a {\it Schur functor} $\schurfunctor^{D}(V)$
defined for more general diagrams $D \subset \{1,2,\ldots\} \times \{1,2,\ldots\}$, such as when $D=\lambda/\mu$ is a {\it skew Ferrers diagram} for a pair of partitions $\mu \subseteq \lambda$.  The skew Schur functor $\schurfunctor^{\lambda/\mu}(V)$ has 
$\ch_{\schurfunctor^{\lambda/\mu}(V)}=s_{\lambda/\mu}$, the {\it skew Schur function}. 
We illustrate the construction of the Schur functor $\schurfunctor^D(V)$ here with an example.

Label the cells of $D$ bijectively with $1,2,\ldots,m$, and define the sets of labels $C_1,\ldots,C_c$ in its columns (left-to-right)
and rows $R_1,\ldots,R_r$ (bottom-to-top), as in this example:
$$
D=
\ytableausetup{boxsize=1.0em}
\begin{ytableau}
 \none&\none&\none&7&8 \\  
 \none&3&5&6\\  
 1&2&4 
\end{ytableau}
\qquad
\begin{aligned}
& \\
&R_1=\{1,2,4\}, R_2=\{3,5,6\}, R_3=\{7,8\},\\
&C_1=\{1\},C_2=\{2,3\},C_3=\{4,5\},C_4=\{6,7\},C_5=\{8\}.
\end{aligned}
$$
Introduce the {\it row} and {\it column compositions} for $D$:
$$
\begin{aligned}
\rows(D)&:=(|R_1|,\ldots,|R_r|),\\
\cols(D)&:=(|C_1|,\ldots,|C_c|).
\end{aligned}
$$
Given a composition $\alpha=(\alpha_1,\ldots,\alpha_\ell)$, abbreviate 
$$
\begin{aligned}
S^{\alpha}(V)&:=S^{\alpha_1}(V) \otimes_\kk \cdots \otimes_\kk S^{\alpha_\ell}(V),\\
\wedge^{\alpha}(V)&:=\wedge^{\alpha_1}(V) \otimes_\kk \cdots \otimes_\kk \wedge^{\alpha_\ell}(V).
\end{aligned}
$$
Then $\schurfunctor^D(V)$ will be a subfunctor
of $S^{\rows(D)}(V)$
defined as the
image of the composite of two maps,
the first an {\it antisymmetrization} map $\fancyA_D$ along the columns of $D$, the second a {\it symmetrization} map $\fancyB_D$ along the rows of $D$:
$$
\wedge^{\cols(D)}(V) 
\overset{\fancyA_D}{\longrightarrow}
T^m(V)
\overset{\fancyB_D}{\longrightarrow}
S^{\rows(D)}(V)
$$
Here the maps $\fancyA_D,\fancyB_D$ are described as follows.  Assuming that the column indexing sets $C_1,\ldots,C_c$ each
consist of contiguous sequences of integers, let $w_D$ be any permutation of the
tensor positions that sends the row indexing sets
$R_1,\cdots,R_c$ to a contiguous sequence of integers. Then $\fancyA_D,\fancyB_D$ can be described via tensor products as
$$
\begin{aligned}
\fancyA_D&:=\bigotimes_{j=1}^c \fancyA_{|C_j|},\\
\fancyB_D&:= \left( \bigotimes_{i=1}^r \fancyB_{|R_i|} \right)
\circ w_D,
\end{aligned}
$$
where $\fancyA_\ell: \wedge^\ell(V) \rightarrow T^\ell(V)$ is the usual inclusion via antisymmetrization 
sending
$$
v_1 \wedge v_2 \wedge \cdots \wedge v_\ell \longmapsto
\sum_{w \in \symm_\ell} \sgn(w) \cdot v_{w(1)} \otimes v_{w(2)} \otimes \cdots v_{w(\ell)},
$$
and $\fancyB_\ell: T^\ell(V) \rightarrow S^{\ell}(V)$ is the usual symmetrization/multiplication map
sending 
$$
v_1 \otimes v_2 \otimes \cdots \otimes v_\ell 
\longmapsto v_1 \cdot v_2 \cdots  v_\ell.
$$
In our example above, the map $\fancyA_D$ is
$$
\wedge^{\cols(D)}(V)=\wedge^1 (V) \otimes
\wedge^2(V) \otimes
\wedge^2(V) \otimes
\wedge^2(V) \otimes
\wedge^1(V) 
\rightarrow T^8(V)
$$
sending
$$
\begin{aligned}
&v_1 \otimes
(v_2 \wedge v_3) \otimes
(v_4 \wedge v_5) \otimes
(v_6 \wedge v_7) \otimes
v_8 \\
&\longmapsto
v_1 \otimes
(v_2 \otimes v_3-v_3 \otimes v_2) \otimes
(v_4 \otimes v_5-v_5 \otimes v_4) \otimes
(v_6 \otimes v_7-v_7 \otimes v_6) \otimes
v_8
\end{aligned}
$$
while the map $\fancyB_D$ is this permutation $w_D$ and tensor product of symmetrizations:
$$
\begin{array}{rcccccl}
V^{\otimes 8}
&\longrightarrow &
S^3(V) &\otimes&
S^3(V) &\otimes&
S^2(V)  =S^{\rows(D)}(V)\\
u_1 \otimes  \cdots
\otimes u_8
&\longmapsto &
u_1 u_2 u_4 &\otimes& u_3 u_5 u_6 &\otimes& u_7 u_8.
\end{array}
$$
As notation, given a filling $T$ of the squares of $D$ with vectors from $V$, one can associate to $T$ a
tensor of decomposable wedges in
$\wedge^{\cols(D)}(V)$, 
and then let $[T]$
denote its image in $S^{\rows(D)}(V)$
under $\fancyB_D \circ \fancyA_D$.
For example, with $D$ the diagram above,
$$
\left[
\ytableausetup{boxsize=1.2em, centertableaux}
\begin{ytableau}
 \none&\none&\none&v_7&v_8 \\  
 \none&v_3&v_5&v_6\\  
 v_1&v_2&v_4 
\end{ytableau}
\right]
= \fancyB_D\left(\fancyA_D\left(\,\,
v_1 \otimes
(v_2 \wedge v_3) \otimes
(v_4 \wedge v_5) \otimes
(v_6 \wedge v_7) \otimes
v_8 \,\,
\right)\right).
$$
\end{example}

Akin, Buchsbaum, and Weyman prove several
important properties of the Schur functors $\schurfunctor^{\lambda/\mu}$
corresponding to skew shapes $\lambda/\mu$ in \cite[Theorem II.2.16]{akin1982schur}.  First, they show $\schurfunctor^{\lambda/\mu}(V)$ has an explicit
    presentation as a quotient representation of $\wedge^{\cols(\lambda/\mu)}(V)$,
rather than as a subrepresentation of 
$S^{\rows(\lambda/\mu)}(V)$.
Specifically, they show that the kernel of the map
$$ 
\fancyB_{\lambda/\mu} \circ \fancyA_{\lambda/\mu}:
\wedge^{\cols(\lambda/\mu)}(V) \longrightarrow
S^{\rows(\lambda/\mu)}(V)
$$
is generated by certain {\it Garnir relations}.
Secondly, they show that $\schurfunctor^{\lambda/\mu}(V)$
is not only a free $\kk$-module, but \textit{universally free}, in the sense that it has a $\kk$-basis that commutes with any change of the base ring $\kk$,
described as follows.  Once one picks a $\kk$-basis $e_1,\ldots,e_n$ for $V$, one obtains an obvious 
monomial $\kk$-basis for $\wedge^{\cols(D)}(V)$ consisting of tensors of wedges of the $\{e_i\}$, indexed by fillings $T: \lambda/\mu \rightarrow \{1,2,\ldots,n\}$  that are {\it column-increasing}, that is, strictly increasing from top-to-bottom in each column of $\lambda/\mu$. Letting $[T]$ denote the image of
such an element under $\fancyB_{\lambda/\mu}\circ \fancyA_{\lambda/\mu}$, they show $\schurfunctor^{\lambda/\mu}(V)$ has a $\kk$-basis 
$\{ [T] \}$ as $T$ runs through the subset of {\it semistandard (column-strict) tableaux} of shape $\lambda/\mu$:  fillings $T$ which are strictly increasing top-to-bottom down each column, but also weakly increasing left-to-right in each row.   Garnir relations let one rewrite any $[T]$
where $T$ is a column-increasing filling as a $\kk$-linear combination of $[T']$
indexed by semistandard tableaux $T'$.

We mention here a consequence of this universal freeness which we will need.

\begin{prop}
\label{tensoring-with-skews-is-exact-prop}
Fixing a skew shape $\lambda/\mu$, for any complex $\mathcal{C}$ of $GL(V)$-modules, tensoring
over $\kk$ with $\schurfunctor^{\lambda/\mu}(V)$ is exact, so it commutes with taking homology: one has an 
isomorphism of $GL(V)$-modules
$$
H_i(\mathcal{C} \otimes_\kk \schurfunctor^{\lambda/\mu}(V))
\cong 
H_i(\mathcal{C}) \otimes_\kk \schurfunctor^{\lambda/\mu}(V).
$$
\end{prop}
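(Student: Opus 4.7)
The plan is to reduce the statement to the fact, just recalled from Akin--Buchsbaum--Weyman, that $\schurfunctor^{\lambda/\mu}(V)$ is a \emph{universally free} $\kk$-module, with an explicit $\kk$-basis $\{[T]\}$ indexed by semistandard tableaux of shape $\lambda/\mu$.  Since a free $\kk$-module is flat, the functor $(-) \otimes_\kk \schurfunctor^{\lambda/\mu}(V)$ from $\kk$-modules to $\kk$-modules is exact.  The $GL(V)$-equivariance of the resulting isomorphism will be automatic because the tensor product is taken in the category of $GL(V)$-modules with the diagonal action, and all the maps under consideration are $GL(V)$-equivariant.

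Given this, the second half of the statement follows from standard homological algebra.  Writing $\mathcal{C} = (\cdots \to C_{i+1} \xrightarrow{d_{i+1}} C_i \xrightarrow{d_i} C_{i-1} \to \cdots)$, the homology $H_i(\mathcal{C}) = \ker d_i / \im d_{i+1}$ is computed from the short exact sequences
\[
0 \to \ker d_i \to C_i \to \im d_i \to 0, \qquad 0 \to \im d_{i+1} \to \ker d_i \to H_i(\mathcal{C}) \to 0.
\]
First I would tensor each of these over $\kk$ with $\schurfunctor^{\lambda/\mu}(V)$; by the exactness just observed, the resulting sequences remain short exact, and they identify $\ker(d_i \otimes 1)$ with $(\ker d_i) \otimes_\kk \schurfunctor^{\lambda/\mu}(V)$, and similarly $\im(d_{i+1} \otimes 1)$ with $(\im d_{i+1}) \otimes_\kk \schurfunctor^{\lambda/\mu}(V)$.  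Taking the quotient then yields the desired $GL(V)$-equivariant isomorphism
\[
H_i\bigl(\mathcal{C} \otimes_\kk \schurfunctor^{\lambda/\mu}(V)\bigr) \;\cong\; H_i(\mathcal{C}) \otimes_\kk \schurfunctor^{\lambda/\mu}(V).
\]

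There is not really a main obstacle here, since the two ingredients — universal freeness of $\schurfunctor^{\lambda/\mu}(V)$ and flatness implying preservation of kernels, images, and quotients — are both standard.  The only point requiring mild care is the $GL(V)$-equivariance: one must check that the natural maps $\ker(d_i) \otimes_\kk \schurfunctor^{\lambda/\mu}(V) \hookrightarrow C_i \otimes_\kk \schurfunctor^{\lambda/\mu}(V)$ and the corresponding one for images are $GL(V)$-equivariant, which is clear because each $d_i$ is $GL(V)$-equivariant and $GL(V)$ acts diagonally on the tensor product.
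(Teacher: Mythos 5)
Your proposal is correct and follows exactly the route the paper intends: the paper states this proposition immediately after recalling the Akin--Buchsbaum--Weyman result that $\schurfunctor^{\lambda/\mu}(V)$ is (universally) free as a $\kk$-module, and gives no further proof because freeness $\Rightarrow$ flatness $\Rightarrow$ exactness of $(-)\otimes_\kk \schurfunctor^{\lambda/\mu}(V)$ $\Rightarrow$ commutation with homology is precisely the standard argument you spell out. The only minor remark is that ordinary freeness over $\kk$ already suffices here; the stronger ``universal'' freeness (basis commuting with base change) is not needed for this particular statement, though it is used elsewhere in the paper.
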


\begin{example}
When $\kk$ is a field,
tensor products, subfunctors and quotient functors of polynomial functors remain polynomial.  Hence the homology groups of a complex of polynomial functors are all polynomial functors. 

In particular, in our setting where $R=\veralg{d}$ and $M=\vermod{d}{ r}$ have each of their homogeneous components a polynomial functor of the form $S^j(V)$, one can conclude that homogeneous components of $\Tor_i^R(M,\kk)=\Tor_i(\kk,M)$ are polynomial functors, as they can be computed as the homology of a tensored Koszul complex $(R \otimes_\kk \wedge V) \otimes_R M$. Similar arguments show that the ($\NN$-graded) homogeneous components $\Tor_i^R(M,M')_j$ of each $\Tor_i^R(M,M')$ are polynomial functors, where $M=\vermod{d}{ r}$ and $M'=\vermod{d}{r'}$. This will also follow from the polynomial form of the $R$-free resolution of $M$ in  Theorem~\ref{skew-Schur-functor-theorem}.

Note that in this setting, $\hom_R (M,N)$ (and hence $\ext_R^i (M , \kk)$) is not necessarily a polynomial functor. 
The action of an element $g$ in $GL(V)$ on $\varphi$ in $\hom_R (M , N)$
via $\varphi \mapsto g \circ \varphi \circ g^{-1}$ introduces denominators of $\det(g)$ into the action.
\end{example}


\subsection{Schur-Weyl duality}
\label{Schur-Weyl-duality-section}
Every homogeneous polynomial functor $V \mapsto \PP(V)$ of degree $m$ 
not only gives rise to
a family of polynomial representations of 
$GL(V)=GL_n(\kk)$ for all $n$, but 
also a representation of $\symm_m$, or $\kk\symm_m$-module,
on its {\it squarefree} or {\it multilinear} or $x_1 x_2 \cdots x_m$-{\it weight
space}:  pick $V=\kk^m$, and consider the $\kk$-vector space
$$
\PP(V)_{x_1x_2 \cdots x_m}
:=\{ u \in \PP(V): \diag(\xx) u = x_1 x_2 \cdots x_m \cdot u\}.
$$
Conversely, when $\kk$ has characteristic zero, one can recover the functor $V \mapsto \PP(V)$ from this 
$\kk \symm_m$-module via the formula
$$
\PP(V)
\cong \bigoplus_{m=0}^\infty  \left( T^m(V)  \otimes_{\kk \symm_m} \PP(V)_{x_1 \cdots x_m} \right)
$$
where the $m$-fold tensor product
$T^m(V) =V^{\otimes m}$
is regarded as a left $\kk GL(V)$-module under the diagonal action
$$
g(v_1 \otimes \cdots \otimes v_m):=g(v_1) \otimes \cdots \otimes g(v_m)
$$
and a right $\kk{\symm_n}$-module using the tensor position action
$$
(v_1 \otimes \cdots \otimes v_m)\sigma:=v_{\sigma(1)} \otimes \cdots \otimes v_{\sigma(m)}.
$$
The $GL(V)$-irreducible Schur functor $\schurfunctor^\lambda(V)$ with $|\lambda|=m$
has $x_1 \cdots x_m$-weight space $\schurfunctor^\lambda(V)_{x_1 \cdots x_m}$ isomorphic to the irreducible Specht module $\specht^\lambda$ for $\symm_m$.
More generally, each skew shape $\lambda/\mu$ with $m=|\lambda|-|\mu|$ has skew Schur functor $\schurfunctor^{\lambda/\mu}(V)$ with
$x_1 \cdots x_m$-weight space $\schurfunctor^{\lambda/\mu}(V)_{x_1 \cdots x_m}$ isomorphic to the {\it skew Specht module} $\specht^{\lambda/\mu}$ for $\symm_m$.

\vskip.1in
\noindent
{\bf Notational warning.}
To lighten notation, from here onward we often omit the $V$
from the Schur functor notation $\schurfunctor^{\lambda/\mu}(V)$,
and denote this simply by $\schurfunctor^{\lambda/\mu}$.  
In particular, 
$$
\schurfunctor^{(m)}=\schurfunctor^{(m)}(V)=S^m(V)=S^m=S_m
$$
where $S=\kk[x_1,\ldots,x_n]=S(V)$.
We hope context will resolve any confusion.

\section{Ribbon Schur Functors}
\label{ribbon-schur-functor-section}

Schur functors coming from \textit{ribbon} diagrams
play a central role in this work.  These Schur functors are almost never irreducible as $GL(V)$-representations, but still give some of the simplest classes of Schur functors one can study.
Our goal in the next few subsections is to present characteristic-free
homological results
valid for arbitrary commutative rings $\kk$, lifting
symmetric function
identities known for ribbon
Schur functions.  We will make heavy use of the universal freeness results for skew Schur functors of Akin, Buchsbaum and Weyman \cite{akin1982schur}, e.g., Proposition~\ref{tensoring-with-skews-is-exact-prop}.

\subsection{Concatenation
and near-concatenation}

\begin{definition} \rm
Given a composition $\alpha = (\alpha_1,\dots, \alpha_\ell)$, recall that 
the ribbon diagram $\sigma(\alpha)$ has row sizes from bottom to top given by the entries of $\alpha$, and each consecutive row overlaps in exactly one column.
For two compositions $\alpha = (\alpha_1,\dots, \alpha_\ell), \beta = (\beta_1,\dots, \beta_m)$, define their \textit{concatenation} $\alpha\cdot \beta$ and  \textit{near concatenation}
$\alpha\odot\beta$ by
$$
\begin{aligned}
\alpha\cdot \beta &:= (\alpha_1,\dots, \alpha_\ell, \beta_1,\dots, \beta_m),\\
\alpha\odot\beta &:= (\alpha_1,\dots, \alpha_{\ell-1}, \alpha_\ell+\beta_1, \beta_2,\dots, \beta_m).
\end{aligned}
$$
\end{definition}

\begin{example}
If
$\alpha=(2,1,3)$ and $\beta=(2,4,2)$, then one has
$\alpha\cdot \beta =(2,1,3,2,4,2)$ and
$\alpha\odot \beta =(2,1,5,4,2)$ with ribbon diagrams
$$
\sigma(\alpha)=
\ytableausetup{boxsize=0.8em}
\begin{ytableau} 
 \none& \alpha&\alpha&\alpha   \\  
 \none& \alpha    \\  
\alpha&\alpha     
\end{ytableau}
\qquad
\sigma(\beta)=
\ytableausetup{boxsize=0.8em}
\begin{ytableau} 
\none& \none& \none& \none&\beta&\beta\\ 
 \none& \beta&\beta&\beta&\beta\\  
 \beta& \beta
\end{ytableau}
$$
$$
\sigma(\alpha \cdot \beta)=
\ytableausetup{boxsize=0.8em}
\begin{ytableau} 
\none&\none&\none&\none& \none& \none& \none&\beta&\beta\\ 
\none&  \none& \none&\none& \beta&\beta&\beta&\beta\\  
\none&  \none&\none & \beta& \beta\\
\none& \alpha&\alpha&\alpha   \\  
\none& \alpha\\  
\alpha&\alpha    
\end{ytableau}
\qquad
\sigma(\alpha \odot \beta)=
\ytableausetup{boxsize=0.8em}
\begin{ytableau} 
\none&\none&\none&\none&\none& \none& \none& \none&\beta&\beta\\ 
\none&\none&  \none& \none&\none& \beta&\beta&\beta&\beta\\
  \none& \alpha&\alpha&\alpha&\beta&\beta   \\  
 \none& \alpha    \\  
\alpha&\alpha    
\end{ytableau}
$$
\end{example}

There is a family of basic relations among ribbon Schur functions \cite[Chapter I.5, Example 21(a)]{Macdonald},
known to generate all other relations among them \cite[Prop. 2.2]{BilleraThomasVanWilligenburg}:
\begin{equation}
\label{concatenation-near-concatenation-identity}
s_{\alpha} s_{\beta} = s_{\alpha \cdot \beta} + s_{\alpha \odot \beta}
\end{equation}
We wish to lift this identity
\eqref{concatenation-near-concatenation-identity} to the following short exact sequence, which will form the base case in
an inductive proof of 
the more general Theorem~\ref{lem:HGcatComplex} below.

\begin{prop}
\label{concatenation-near-concatenation-prop}
For any compositions $\alpha, \beta$ one has maps
of polynomial functors $\Delta_{\alpha,\beta}, m_{\alpha,\beta}$ giving rise to 
a $GL(V)$-equivariant short exact sequence of representations:
\begin{equation}
\label{eq: concatenation SES}
    0 \ra \schurfunctor^{\sigma(\alpha\cdot \beta)} \xrightarrow[]{\Delta_{\alpha,\beta}} \schurfunctor^{\sigma(\alpha)}\otimes \schurfunctor^{\sigma(\beta)} \xrightarrow[]{m_{\alpha,\beta}} \schurfunctor^{\sigma(\alpha\odot\beta)}\ra 0.
\end{equation}
This sequence is split over $\kk$, but not necessarily split over $GL (V)$. 
\end{prop}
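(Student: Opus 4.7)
My plan is to identify $\Delta$ and $m$ with natural comultiplication and multiplication maps, verify $m\circ\Delta = 0$ by commutativity of $S(V)$, and invoke universal freeness of skew Schur functors to reduce exactness in the middle to the classical ribbon Schur function identity.

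The key combinatorial observation is that in any ribbon $\sigma(\gamma)$ the leftmost and rightmost columns have size exactly $1$. Hence in $\sigma(\alpha\cdot\beta)$ the rightmost column of $\sigma(\alpha)$ and the leftmost column of $\sigma(\beta)$ fuse into a single ``junction column'' of size $2$, while in $\sigma(\alpha\odot\beta)$ they sit side-by-side in the merged row as two separate size-$1$ columns. In composition notation,
\begin{align*}
\cols(\sigma(\alpha\cdot\beta)) &= \cols(\sigma(\alpha)) \odot \cols(\sigma(\beta)),\\
\cols(\sigma(\alpha\odot\beta)) &= \cols(\sigma(\alpha)) \cdot \cols(\sigma(\beta)),
\end{align*}
while $\rows(\sigma(\alpha\cdot\beta)) = \alpha\cdot\beta$ and $\rows(\sigma(\alpha\odot\beta)) = \alpha\odot\beta$. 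I would define $\Delta$ as the map induced by applying the exterior coproduct $\wedge^2(V) \to V\otimes V$, $v\wedge w\mapsto v\otimes w - w\otimes v$, at the junction column: the resulting map $\wedge^{\cols(\sigma(\alpha\cdot\beta))}(V) \to \wedge^{\cols(\sigma(\alpha))}(V)\otimes \wedge^{\cols(\sigma(\beta))}(V)$ factors through the Schur-functor quotient on the source, realizing $\Delta$ as an inclusion of subfunctors of $S^{\alpha\cdot\beta}(V)$. Dually, $m$ is the map induced by multiplication $S^{\alpha_\ell}(V)\otimes S^{\beta_1}(V) \to S^{\alpha_\ell+\beta_1}(V)$ applied at the merged row, which evidently carries $\schurfunctor^{\sigma(\alpha)}\otimes \schurfunctor^{\sigma(\beta)}$ onto $\schurfunctor^{\sigma(\alpha\odot\beta)}$.

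The vanishing $m\circ\Delta = 0$ is then the cleanest step: a wedge $v\wedge w$ at the junction column is sent by $\Delta$ to the combination $v\otimes w - w\otimes v$, placing $v,w$ across the top row of $\sigma(\alpha)$ and the bottom row of $\sigma(\beta)$ (and swapped in the second term), and multiplying across the merged row yields $vw - wv = 0$ in the commutative algebra $S(V)$. Injectivity of $\Delta$ is automatic from its subfunctor description, while surjectivity of $m$ follows from a ``split-at-the-merged-row'' observation: any column-strict filling $T$ of $\sigma(\alpha\odot\beta)$ decomposes at its merged row into column-strict fillings $T_\alpha$, $T_\beta$ of $\sigma(\alpha)$, $\sigma(\beta)$ (using respectively the first $\alpha_\ell$ and last $\beta_1$ entries of that row), with $m([T_\alpha]\otimes [T_\beta]) = [T]$.

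The main obstacle will be exactness at the middle, $\ker m = \operatorname{im}\Delta$. For this I would invoke the universal freeness of skew Schur functors recalled in Section~\ref{general-linear-review-section} (following \cite{akin1982schur}): since all three functors admit $\kk$-bases compatible with base change, exactness can be verified after tensoring with $\mathbb{Q}$. In characteristic zero, semisimplicity of polynomial $GL(V)$-representations together with the classical ribbon Schur function identity
\begin{equation*}
s_{\sigma(\alpha)}\cdot s_{\sigma(\beta)} \;=\; s_{\sigma(\alpha\cdot\beta)} \,+\, s_{\sigma(\alpha\odot\beta)}
\end{equation*}
of Macdonald~\cite[I.5, Ex.~21(a)]{Macdonald} (equation~\eqref{concatenation-near-concatenation-identity}), together with the maps $\Delta, m$ already constructed, yields the direct-sum decomposition $\schurfunctor^{\sigma(\alpha)}\otimes\schurfunctor^{\sigma(\beta)} \cong \schurfunctor^{\sigma(\alpha\cdot\beta)}\oplus\schurfunctor^{\sigma(\alpha\odot\beta)}$, hence the sequence is split-exact in characteristic zero. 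Transporting along the universally free integral bases then gives the desired short exact sequence over any $\kk$; since all terms are $\kk$-free, the sequence at least splits as a sequence of $\kk$-modules.
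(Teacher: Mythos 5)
Your plan follows the same outline as the paper: define $\Delta$ and $m$ via exterior coproduct at the fused column and symmetric multiplication at the merged row, verify $m\circ\Delta=0$, then lean on universal freeness of skew Schur functors together with the classical identity $s_{\sigma(\alpha)}s_{\sigma(\beta)} = s_{\sigma(\alpha\cdot\beta)} + s_{\sigma(\alpha\odot\beta)}$. Your construction of the maps, the injectivity-as-subfunctor observation, the split-at-merged-row surjectivity of $m$, and the $vw - wv = 0$ argument for $m\circ\Delta=0$ are all correct and match the ribbon specialization of the paper's Proposition~\ref{skew-concatenation-near-concatenation-prop}(i)--(iii) (the paper proves the more general statement for arbitrary skew shapes $D,D'$, where the fused column can have any height, and derives the ribbon case as a corollary).

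The gap is in your final step. You assert that because the three Schur functors admit universally free bases, ``exactness can be verified after tensoring with $\mathbb{Q}$,'' and that split-exactness over $\mathbb{Q}$ can be ``transported'' to any $\kk$. Neither implication is valid. A complex of free $\ZZ$-modules can become exact after $\otimes_\ZZ\mathbb{Q}$ while failing to be exact over $\ZZ$ (e.g.\ $0\to\ZZ\xrightarrow{(1,0)^t}\ZZ^2\xrightarrow{(0,2)}\ZZ\to 0$), and a splitting defined over $\mathbb{Q}$ may involve denominators and need not descend to $\ZZ$ or survive reduction mod $p$. Universal freeness guarantees that the \emph{terms} base-change well; it says nothing about the homology of the complex. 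The paper's proof of Proposition~\ref{skew-concatenation-near-concatenation-prop}(iv) handles this carefully: it first notes that exactness holds over \emph{every field} $\kk$ (by dimension count against the identity~\eqref{concatenation-near-concatenation-identity}, including $\mathbb{F}_p$ for all $p$, not just $\mathbb{Q}$); it then shows exactness over $\ZZ$ by arguing that a prime $p$ dividing the index $[\ker m : \operatorname{im}\Delta]$ would contradict injectivity of $\Delta\otimes_\ZZ\mathbb{F}_p$; it then obtains the splitting over $\ZZ$ because $\ZZ^b$ is projective; and only then applies $(-)\otimes_\ZZ\kk$ to the split exact sequence of free $\ZZ$-modules. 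Your argument could be repaired by replacing the appeal to $\mathbb{Q}$ with this all-primes argument (and you have the ingredients: your subfunctor description gives injectivity of $\Delta$ over any $\kk$, which is exactly what the $\mathbb{F}_p$ step needs), but as written the reduction is not justified.
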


When the compositions $\alpha$ and $\beta$ as in the statement of Proposition \ref{concatenation-near-concatenation-prop} are clear from context, we will use the more concise notation $m, \Delta$ to denote the maps $m_{\alpha, \beta} , \Delta_{\alpha, \beta}$.

In fact, it is not hard to generalize concatenation, near-concatenation, the identity \eqref{concatenation-near-concatenation-identity} and sequence \eqref{eq: concatenation SES} to more general classes of diagrams.

\begin{definition} 
\label{skew-concat-defn}
\rm 
Given two (not necessarily skew-shaped) diagrams $D, D'$, define their {\it disjoint sum} $D \oplus D'$, {\it concatenation}
$D \cdot D'$ and {\it near-concatenation} $D \odot D'$ as follows, illustrated for two skew diagrams here, with two extreme cells labeled $x,x'$ for clarity:
$$
D=
\ytableausetup{boxsize=0.8em}
\begin{ytableau} 
 \none& &x\\  
      & &     \\  
\\     
\end{ytableau}
\qquad
D'=
\ytableausetup{boxsize=0.8em}
\begin{ytableau} 
\none&\none& & \\
     &     & & \\ 
     &     & & \\ 
    x'& \\
\end{ytableau}
$$
\vskip.2in
$$
D \oplus D'=
\ytableausetup{boxsize=0.8em}
\begin{ytableau} 
\none&\none&\none&\none&\none& & \\
\none&\none&\none&     &     & & \\ 
\none&\none&\none&     &     & & \\ 
\none&\none&\none&x'    & \\
\none& &x\\  
     & &     \\  
\\  
\end{ytableau}
\qquad \qquad
D \cdot D'=
\ytableausetup{boxsize=0.8em}
\begin{ytableau} 
\none&\none&\none&\none& & \\
\none&\none&     &     & & \\ 
\none&\none&     &     & & \\ 
\none&\none&x'    & \\
\none& &x\\  
     & &     \\  
\\ 
\end{ytableau}
\qquad \qquad
D \odot D'=
\ytableausetup{boxsize=0.8em}
\begin{ytableau} 
\none&\none&\none&\none&\none& & \\
\none&\none&\none&     &     & & \\ 
\none&\none&\none&     &     & & \\ 
\none& &x&x'    & \\
     & &     \\  
\\ 
\end{ytableau}
$$
\begin{itemize}
    \item 
The disjoint sum $D \oplus D'$ places $D$ southwest of $D'$, sharing no rows and columns. 
\item The concatenation $D \cdot D'$ is obtained from
$D \oplus D'$ by moving the cells of the rightmost column of $D$ and leftmost column of $D'$ to lie in the same column,  with no other overlap of rows and columns.
\item The near-concatenation $D \odot D'$ is obtained from
$D \oplus D'$
by moving the cells in the top row of
$D$ and bottom row of $D'$ to
lie in the same row,  with no other overlap of rows and columns.
\end{itemize}
\end{definition}

The following is an easy generalization of identity \eqref{concatenation-near-concatenation-identity}.
\begin{prop}
\label{skew-concatenation-near-concatenation-identity}
For any two skew shapes $D, D'$, one has the skew Schur function identity
$$
s_{D} s_{D'} = s_{D \cdot D'}
+ s_{D \odot D'}.
$$
\end{prop}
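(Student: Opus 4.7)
The plan is to prove this identity via a direct $x^T$-preserving bijection on semistandard Young tableaux (SSYTs), using the combinatorial formula $s_D = \sum_T x^T$ summed over fillings $T : D \to \{1,2,\ldots\}$ that weakly increase along rows and strictly increase down columns. This is the cleanest route because all three operations $\oplus, \cdot, \odot$ preserve the underlying multiset of cells of $D$ and $D'$; only the adjacencies change.

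First, I would reduce to showing $s_{D \oplus D'} = s_{D \cdot D'} + s_{D \odot D'}$. Since the disjoint sum $D \oplus D'$ shares no rows or columns between the two blocks, an SSYT of $D \oplus D'$ is exactly an independent pair $(T_D, T_{D'})$ of SSYTs, giving $s_{D \oplus D'} = s_D \cdot s_{D'}$ directly.

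Next, I would zoom in on the two distinguished cells: $x$ at the top of the rightmost column of $D$, and $x'$ at the bottom of the leftmost column of $D'$. By Definition~\ref{skew-concat-defn}, in passing from $D \oplus D'$ to $D \cdot D'$, the only adjacency that changes is that the rightmost column of $D$ and the leftmost column of $D'$ are merged into a single column with $x'$ directly above $x$; all other columns of $D \cdot D'$ still lie entirely within the $D$-part or entirely within the $D'$-part, so the only new column-strict constraint is $T(x') < T(x)$. Symmetrically, passing to $D \odot D'$ only introduces the new row-weak constraint $T(x) \leq T(x')$, with no other row or column adjacencies changing.

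With this setup the bijection is immediate: any pair $(T_D, T_{D'})$ of SSYTs satisfies exactly one of $T_D(x) \leq T_{D'}(x')$ or $T_D(x) > T_{D'}(x')$, producing an SSYT of $D \odot D'$ or $D \cdot D'$ respectively, with the same cell entries and hence the same monomial $x^T$. Summing over all pairs yields the identity. The only subtle point, which I would verify carefully, is the ``no other overlap of rows and columns'' clause in Definition~\ref{skew-concat-defn}: one must confirm that $\cdot$ and $\odot$ create no further adjacencies beyond the single pair $(x, x')$, so that every remaining SSYT condition on $D \cdot D'$ or $D \odot D'$ already holds by virtue of $T_D$ and $T_{D'}$ being SSYTs individually. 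This is transparent from the geometric definition, so I do not anticipate any real obstacle.
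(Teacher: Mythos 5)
Your proof is correct and takes essentially the same approach as the paper's: expand each $s_D$ as a sum over column-strict tableaux and split pairs $(T_D, T_{D'})$ according to whether $T_D(x) > T_{D'}(x')$ (giving $D\cdot D'$) or $T_D(x) \leq T_{D'}(x')$ (giving $D\odot D'$). Your extra care in verifying that the only new adjacency is the $(x,x')$ pair is a point the paper glosses over, and you also correctly place $x$ at the \emph{northeast} of $D$, where the paper's proof has a typo saying "northwestmost."
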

\begin{proof}
Use the formula the skew Schur function as
$
s_D=\sum_T \xx_T
$
where $T$ runs over all column-strict tableaux of shape $D$ with entries in $\{1,2,\ldots,n\}$.  Here 
$
\xx_T:=\prod_{i=1}^n x_i^{c_i(T)}
$
where $c_i(T)$ is the number of occurrences of $i$ in the tableau $T$.

The left side in the proposition can be written as the sum $\sum_{(T,T')} \xx_T \xx_{T'}$
over all pairs $(T,T')$ of column-strict tableaux of shape $D, D'$.  Letting $i,i'$, respectively, denote the entries filling the northeastmost cell $x$ in $T$, and the
southwestmost cell $x'$ in $T'$, the two summands on the right side of the identity segregate these $(T,T')$ summands on the left side
according to whether
\begin{itemize}
\item 
$i > i'$, so the tableaux $T, T'$ glue to give one of shape $D \cdot D'$, or 
\item 
$i \leq i'$, so the tableaux $T, T'$ glue to give one of shape $D \odot D'$. \qedhere
\end{itemize}
\end{proof}

The proof of Proposition~\ref{skew-concatenation-near-concatenation-identity} does not extend to characters $\ch_{\bbs^D(V)}$ associated with {\it arbitrary} (not necessarily skew)  diagrams $D$, since in general the tableaux with semistandard fillings need not form a basis for the Schur functor $\bbs^D (V)$ (see for instance \cite[Remark (ii) on page 489]{woodcock1994vanishing}). We next lift the identity in the Proposition~\ref{skew-concatenation-near-concatenation-identity} to an exact sequence.

\begin{prop}
\label{skew-concatenation-near-concatenation-prop}
Let $D, D'$ be any diagrams, not necessarily skew-shaped.
\begin{enumerate}
    \item[(i)] There is an injective $GL(V)$-equivariant map $\Delta: \schurfunctor^{D \cdot D'} \hookrightarrow \schurfunctor^{D \oplus D'}$.
    \item[(ii)] There is a surjective $GL(V)$-equivariant map $m: \schurfunctor^{D \oplus D'} \twoheadrightarrow \schurfunctor^{D \odot D'}$.
    \item[(iii)] Assume $D$ contains a northeastmost corner cell $x$
    and $D'$ contains a southwestmost corner cell $x'$, such as when $D,D'$ are skew shapes.  Then the maps $m, \Delta$ in (i),(ii) satisfy $m \circ \Delta=0$, giving a short complex,
    but not always exact at the middle term:
    $$
    0 \longrightarrow 
    \schurfunctor^{D \cdot D'} 
    \overset{\Delta}{\longrightarrow}
    \schurfunctor^{D \oplus D'}
    \overset{m}{\longrightarrow}
    \schurfunctor^{D \odot D'}
    \longrightarrow 0.
    $$
    \item[(iv)] When $D,D'$ are both skew shapes,
    the complex in (iii) is short exact and split as $\kk$-modules (but not necessarily split as $GL (V)$-modules).  
\end{enumerate}
\end{prop}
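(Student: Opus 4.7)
For parts (i) and (ii), I would exploit the identifications $\rows(D \cdot D') = \rows(D \oplus D')$ (concatenation leaves row compositions unchanged) and $\cols(D \odot D') = \cols(D \oplus D')$ (near-concatenation leaves column compositions unchanged) to produce $\Delta$ and $m$ directly from the presentation $\wedge^{\cols(D)}(V) \xrightarrow{\fancyA_D} T^{|D|}(V) \xrightarrow{\fancyB_D} S^{\rows(D)}(V)$. For $\Delta$: the standard exterior comultiplication $\wedge^{c_a + c_1'}(V) \to \wedge^{c_a}(V) \otimes \wedge^{c_1'}(V)$ on the column split by the transition $D \cdot D' \rightsquigarrow D \oplus D'$ induces a map $\wedge^{\cols(D \cdot D')}(V) \to \wedge^{\cols(D \oplus D')}(V)$, and the elementary identity that antisymmetrization on a $(c_a+c_1')$-wedge equals comultiplication followed by antisymmetrization on each half (both sides yield $\sum_{\sigma \in \symm_{c_a+c_1'}} \sgn(\sigma) v_{\sigma(\cdot)}$) yields a commutative square with the $\fancyB \circ \fancyA$ maps. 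Consequently $\schurfunctor^{D \cdot D'}(V) \subseteq \schurfunctor^{D \oplus D'}(V)$ inside the common target $S^{\rows(D \oplus D')}(V)$, and I set $\Delta$ to be this inclusion, automatically injective. For $m$: the row multiplication $S^{d_r}(V) \otimes S^{d_1'}(V) \to S^{d_r + d_1'}(V)$ induces $S^{\rows(D \oplus D')}(V) \twoheadrightarrow S^{\rows(D \odot D')}(V)$, and the analogous commutative square defines $m \colon \schurfunctor^{D \oplus D'}(V) \to \schurfunctor^{D \odot D'}(V)$; surjectivity follows from the surjection $\wedge^{\cols(D \oplus D')}(V) = \wedge^{\cols(D \odot D')}(V) \twoheadrightarrow \schurfunctor^{D \odot D'}(V)$.

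\textbf{For (iii),} the two commutative squares imply that $m \circ \Delta$ agrees with the direct composite $\schurfunctor^{D \cdot D'}(V) \hookrightarrow S^{\rows(D \cdot D')}(V) \xrightarrow{\mu} S^{\rows(D \odot D')}(V)$, where $\mu$ multiplies the top-row-of-$D$ and bottom-row-of-$D'$ factors. The corner hypothesis forces the northeastmost cell $x$ of $D$ and the southwestmost cell $x'$ of $D'$ both to lie in the shared column of $D \cdot D'$ that $\fancyA_{D \cdot D'}$ antisymmetrizes, and also in the row that $\mu$ multiplies. For any representative of an element of $\schurfunctor^{D \cdot D'}(V)$, the column antisymmetrization produces a signed sum over permutations $\sigma$ of the shared column's contents; pairing each $\sigma$ with the permutation $\sigma'$ obtained by swapping $\sigma$'s values at the positions of $x$ and $x'$ flips the antisymmetrization sign, while the subsequent product $\mu$ in $S^{d_r + d_1'}(V)$ is unchanged (because $x$ and $x'$ contribute to the same symmetric factor). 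These terms cancel in pairs, giving $m \circ \Delta = 0$.

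\textbf{For (iv),} I plan to establish split exactness via a combinatorial $\bbz$-linear splitting. By universal freeness of skew Schur functors (Akin--Buchsbaum--Weyman), each of $\schurfunctor^{D \cdot D'}, \schurfunctor^{D \oplus D'}, \schurfunctor^{D \odot D'}$ is a free $\bbz$-module with a semistandard-tableau basis commuting with base change. Define $s \colon \schurfunctor^{D \oplus D'}(V) \to \schurfunctor^{D \cdot D'}(V)$ on the basis $\{[T] \otimes [T']\}$ of $\schurfunctor^{D \oplus D'}(V) = \schurfunctor^D(V) \otimes \schurfunctor^{D'}(V)$ by
$$
s([T] \otimes [T']) := \begin{cases} [T \cdot T'] & \text{if } T \cdot T' \text{ is semistandard as a filling of } D \cdot D', \\ 0 & \text{otherwise,} \end{cases}
$$
where $T \cdot T'$ denotes the filling of $D \cdot D'$ obtained by placing $T$ on the $D$-cells and $T'$ on the $D'$-cells. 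Once $s \circ \Delta = \mathrm{id}$ is verified, $\Delta$ is split injective over $\bbz$, and combining with the character identity $\ch \schurfunctor^{D \oplus D'} = \ch \schurfunctor^{D \cdot D'} + \ch \schurfunctor^{D \odot D'}$ from Proposition~\ref{skew-concatenation-near-concatenation-identity} (equating $\bbz$-ranks) and the surjectivity of $m$ from (ii) forces $m$ to identify the cokernel of $\Delta$ with $\schurfunctor^{D \odot D'}(V)$; this gives a split short exact sequence of universally free $\bbz$-modules, which base changes to any $\kk$.

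\textbf{Main obstacle.} The crux is verifying $s \circ \Delta = \mathrm{id}$. Given a semistandard filling $U$ of $D \cdot D'$, one must expand $\Delta([U])$ by applying the column comultiplication on the shared column and then straightening via Garnir relations into the semistandard basis of $\schurfunctor^{D \oplus D'}(V)$, confirming that precisely one term --- namely $[T] \otimes [T']$ where $T, T'$ are the restrictions of $U$ to $D$ and $D'$ --- yields a semistandardly-concatenable pair, and appears with coefficient $1$. This requires careful bookkeeping of comultiplication signs interacting with Garnir straightening; the skew-shape assumption on $D, D'$ is what should guarantee that every ``wrong'' shuffle of the shared column straightens only to pairs whose concatenation violates column-strictness and so is killed by $s$.
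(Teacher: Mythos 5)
Parts (i) and (ii) are exactly the paper's construction: the comultiplication $\wedge^{\ell+\ell'}\to\wedge^\ell\otimes\wedge^{\ell'}$ factors $\fancyA_{D\cdot D'}$ through $\fancyA_{D\oplus D'}$ (and $\fancyB$ is unchanged), dually the multiplication $S^r\otimes S^{r'}\to S^{r+r'}$ factors $\fancyB_{D\odot D'}$ through $\fancyB_{D\oplus D'}$ (and $\fancyA$ is unchanged), yielding the injection and surjection. Part (iii) is also essentially the paper's argument: your direct identification of $m\circ\Delta$ with $\schurfunctor^{D\cdot D'}\hookrightarrow S^{\rows(D\cdot D')}\to S^{\rows(D\odot D')}$, followed by the sign-cancelling pairing $\sigma\leftrightarrow\sigma\cdot(x,x')$, is the same transposition cancellation that the paper isolates by first reducing (via a tensor factorization) to the case where $D,D'$ are single columns and observing $\fancyB_R\circ\fancyA_{\ell+\ell'}=0$ there. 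Your version is a little more direct and is perfectly correct; the paper's two-step factorization simply makes the cancellation visible on a smaller Schur functor.

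Part (iv) is where you genuinely diverge from the paper, and where your proposal has a gap. You propose to split $\Delta$ explicitly by defining $s\colon\schurfunctor^{D\oplus D'}\to\schurfunctor^{D\cdot D'}$ on the semistandard tensor basis by $[T]\otimes[T']\mapsto[T\cdot T']$ if $T\cdot T'$ is semistandard on $D\cdot D'$ and $0$ otherwise, and you correctly flag the verification of $s\circ\Delta=\mathrm{id}$ as the crux. This verification is not routine: expanding $\Delta([U])$ in the semistandard basis of $\schurfunctor^{D\oplus D'}$ requires applying the exterior comultiplication on the merged column and then \emph{straightening} the resulting (generally non-semistandard) pairs via Garnir relations, and you would need to prove that after straightening, the ``diagonal'' pair $[T_0]\otimes[T_0']$ appears with coefficient $1$ while every other surviving semistandard pair $(T,T')$ concatenates non-semistandardly. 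This requires a careful analysis of how Garnir moves interact with the merged column, and as written, that argument is missing. The paper takes a different, cleaner route that avoids Garnir bookkeeping entirely: by universal freeness of skew Schur functors together with the character identity of Proposition~\ref{skew-concatenation-near-concatenation-identity}, the three terms are free $\kk$-modules of ranks $a$, $a+b$, $b$ over \emph{every} $\kk$. Over a field the complex is then short exact by dimension count; over $\ZZ$ one observes $\im(\Delta)\subseteq\ker(m)$ are free of the same rank $a$, and if any prime $p$ divided the index $[\ker(m):\im(\Delta)]$ this would contradict injectivity of $\Delta\otimes_\ZZ\mathbb F_p$; the sequence over $\ZZ$ then splits because $\ZZ^b$ is projective, and base change gives the split short exact sequence over any $\kk$. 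I would recommend replacing your splitting-map argument by this rank-counting argument, or at least supplying the Garnir analysis if you wish to keep the explicit splitting.
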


\noindent
In particular, part (iv) applied to $D=\sigma(\alpha), D'=\sigma(\beta)$ gives
Proposition~\ref{concatenation-near-concatenation-prop}, and the maps $\Delta, m$ are the maps $\Delta_{\alpha,\beta}, \,m_{\alpha,\beta}$.

\begin{example}
For diagrams
$
D=\ytableausetup{boxsize=0.8em}
\begin{ytableau} 
x \\ 
\end{ytableau}
$
and 
$ 
D'=
\begin{ytableau}
\, \\
x'& \\
\end{ytableau}
$,
the complex 
in (iii) takes the form
\begin{equation}
\label{non-exact-short-complex}
0 \rightarrow 
\bbs^{
\ytableausetup{boxsize=0.4em}
\begin{ytableau} 
\, \\
 &  \\
\,
\end{ytableau}
}
\overset{\Delta}{\longrightarrow}
\bbs^{\ytableausetup{boxsize=0.4em}
\begin{ytableau} 
\none& \\
\none & & \\
\,\\
\end{ytableau}}
\overset{m}{\longrightarrow}
\bbs^{\ytableausetup{boxsize=0.4em}
\begin{ytableau} 
\none& \\
 & & \\
\end{ytableau}}
\rightarrow
0.
\end{equation}
Here one can check that the map $\Delta$ sends
$$
\left[
\ytableausetup{boxsize=1.2em, centertableaux}
\begin{ytableau}
a \\  
b & d\\  
c 
\end{ytableau}
\right]
\quad \overset{\Delta}{\longmapsto}
\left[
\ytableausetup{boxsize=1.2em, centertableaux}
\begin{ytableau}
\none &a \\  
\none &b & d\\  
c 
\end{ytableau}
\right]
-
\left[
\ytableausetup{boxsize=1.2em, centertableaux}
\begin{ytableau}
\none &a \\  
\none &c & d\\  
b 
\end{ytableau}
\right]
+
\left[
\ytableausetup{boxsize=1.2em, centertableaux}
\begin{ytableau}
\none&b \\  
\none&c & d\\  
a 
\end{ytableau}
\right].
$$
Meanwhile, viewing the image of $m$ as lying
in $S^{\rows(D \odot D')}=S^3(V) \otimes S^1(V)$, it sends
$$
\left[
\ytableausetup{boxsize=1.2em, centertableaux}
\begin{ytableau}
\none &w \\  
\none &x & y\\  
z
\end{ytableau}
\right]
\quad \overset{m}{\longmapsto}
zxy \otimes w - zwy \otimes x.
$$
Then their composite $m \circ \Delta$ is the zero map, as shown here:
$$
\begin{aligned}
\left[
\ytableausetup{boxsize=1.2em, centertableaux}
\begin{ytableau}
a \\  
b & d\\  
c 
\end{ytableau}
\right]
& \overset{m \circ \Delta}{\longmapsto}
m \left[
\ytableausetup{boxsize=1.2em, centertableaux}
\begin{ytableau}
\none &a \\  
\none &b & d\\  
c 
\end{ytableau}
\right]
-
m \left[
\ytableausetup{boxsize=1.2em, centertableaux}
\begin{ytableau}
\none &a \\  
\none &c & d\\  
b 
\end{ytableau}
\right]
+
m \left[
\ytableausetup{boxsize=1.2em, centertableaux}
\begin{ytableau}
\none&b \\  
\none&c & d\\  
a 
\end{ytableau}
\right]\\
&=cbd \otimes a - cad \otimes b
- ( bcd \otimes a - bad \otimes c )
+ acd \otimes b - abd \otimes c
=0.
\end{aligned}
$$
However one can check that in this case, the short complex \eqref{non-exact-short-complex} is not short exact, for example
by comparing their $GL(V)$-characters:
$$
\begin{aligned}
\ch_{
\bbs^{\ytableausetup{boxsize=0.4em}
\begin{ytableau} 
\, \\
 &  \\
\, \\
\end{ytableau}}} &=s_{(2,1,1)},\\
\ch_{
\bbs^{\ytableausetup{boxsize=0.4em}
\begin{ytableau} 
\none& \\
\none & & \\
\,\\
\end{ytableau}}} &= s_{(2,1,1)}+s_{(2,2)}+s_{(3,1)}+s_{(2,2)},\\
\ch_{
\bbs^{
\ytableausetup{boxsize=0.4em}
\begin{ytableau} 
\none& \\
 & & \\ 
\end{ytableau}}} & = s_{(3,1)}.
\end{aligned}
$$
The reader can also check that, replacing $D'$ above
with the diagram 
$
\ytableausetup{boxsize=0.8em}
\begin{ytableau}
\, & \\
x' \\
\end{ytableau}
$ in which one swaps the rows to make it a Ferrers diagram, the resulting complex would be short exact:
\begin{equation*}
0 \rightarrow 
\bbs^{
\ytableausetup{boxsize=0.4em}
\begin{ytableau} 
\,& \\
\,  \\
\,
\end{ytableau}
}
\overset{\Delta}{\longrightarrow}
\bbs^{\ytableausetup{boxsize=0.4em}
\begin{ytableau} 
\none& &\\
\none & \\
\,\\
\end{ytableau}}
\overset{m}{\longrightarrow}
\bbs^{\ytableausetup{boxsize=0.4em}
\begin{ytableau} 
\none& &\\
 &  \\
\end{ytableau}}
\rightarrow
0.
\end{equation*}
\end{example}

\begin{proof}[Proof of Proposition~\ref{skew-concatenation-near-concatenation-prop}]
We prove each part separately.
\vskip.1in
\noindent
{\sf Proof of (i).}
Let the last column of $D$ and first column of $D'$ contain $\ell, \ell'$ cells, respectively,
so that they are merged to form a column with $\ell+\ell'$ cells in $D \cdot D'$.  Then the factorization of the antisymmetrization map
$$
\wedge^{\ell+\ell'}(V) \rightarrow \wedge^{\ell}(V) \otimes \wedge^{\ell'}(V)
\rightarrow T^{\ell}(V) \otimes T^{\ell'}(V) 
=T^{\ell+\ell'}(V)
$$
leads to a factorization
of the antisymmetrization map 
$\fancyA_{D \cdot D'}$ through $\fancyA_{D \oplus D'}$ as follows:
$$
\wedge^{\cols{D \cdot D'}}(V) \rightarrow \wedge^{\cols{D \oplus D'}}(V)
\rightarrow T^{|D|+|D'|}(V).
$$
Since $\fancyB_{D \cdot D'}=\fancyB_{D \oplus D'}$,
this leads to an inclusion of the
Schur functors 
$$
\schurfunctor^{D \cdot D'} = 
\im(\fancyB_{D \cdot D'} \circ
\fancyA_{D \cdot D'})
\hookrightarrow 
\im(\fancyB_{D \oplus D'} \circ
\fancyA_{D \oplus D'})
=\schurfunctor^{D \oplus D'}.
$$

\vskip.1in
\noindent
{\sf Proof of (ii).}
Similarly, let the top row of $D$ and bottom row of $D'$ contain $r, r'$ cells, respectively,
so that they are merged to form a row with $r+r'$ cells in $D \odot D'$.  Then the factorization of the symmetrization map
$$
T^{r+r'}(V)=T^r(V) \otimes T^{r'}(V)  \rightarrow S^{r}(V) \otimes S^{r'}(V)
\rightarrow S^{r+r'}(V)
$$
leads to a factorization
of the symmetrization map 
$\fancyB_{D \odot D'}$
through
$\fancyB_{D \oplus D'}$ as follows:
$$
T^{|D|+|D'|}(V) \rightarrow S^{\rows{D \oplus D'}}(V) \rightarrow S^{\rows{D \odot D'}}(V)
$$
Since $\fancyA_{D \odot D'}=\fancyA_{D \oplus D'}$,
this leads to a surjection of the
Schur functors 
$$
\schurfunctor^{D \oplus D'} =
\im(\fancyB_{D \oplus D'} \circ
\fancyA_{D \oplus D'})
\twoheadrightarrow 
\im(\fancyB_{D \odot D'} \circ
\fancyA_{D \odot D'})
=\schurfunctor^{D \odot D'}.
$$

\vskip.1in
\noindent
{\sf Proof of (iii).}
We wish to show that when $D, D'$ have northeastmost, southwestmost cells $x,x'$ then the map 
$m \circ \Delta: 
\schurfunctor^{D \cdot D'} 
\rightarrow 
\schurfunctor^{D \odot D'}
$
is zero. Let 
$
N:=|D|+|D'|
$
denote the number of cells in any of the diagrams
$D \cdot D', D \oplus D', D \odot D'$.
One can express $m \circ \Delta$
as a composite
\begin{equation}
\label{m-delta-as-first-composite}
\wedge^{\cols(D \cdot D')}(V) 
\overset{\fancyA_{D \cdot D'}}{\longrightarrow} 
T^N(V)
\overset{\fancyB_{D \odot D'}}{\longrightarrow} 
S^{\rows(D \odot D')}(V).
\end{equation}
as illustrated in the following example.  Let 
$$
D=
\ytableausetup{boxsize=0.8em}
\begin{ytableau} 
\, & & x \\ 
 &\none & \\  
\end{ytableau}
\qquad
D'=
\ytableausetup{boxsize=1.0em}
\begin{ytableau} 
\none & \none & \\ 
 &  \\
x' & & 
\end{ytableau}
$$
so $N=|D|+|D'|=11$, and use these
consistent cell labelings in $D \cdot D'$ and $D\odot D'$, 
with $x=5, x'=6$ darkened, in bijection with
the tensor positions in $T^N(V)$: 
$$
D \cdot D'=
\ytableausetup{boxsize=1.0em}
\begin{ytableau} 
\none & \none& \none & \none & 11 \\ 
\none & \none& 7 & 9 \\
\none &\none & \mathbf{6} & 8 & 10 \\
2 & 3 & \mathbf{5} \\ 
1 &\none & 4 \\  
\end{ytableau}
\qquad \qquad
D \odot D'=
\ytableausetup{boxsize=1.0em}
\begin{ytableau} 
\none & \none& \none & \none & \none & 11 \\ 
\none & \none& \none & 7 & 9 \\
2 & 3 & \mathbf{5} & \mathbf{6} & 8 & 10 \\
1 &\none & 4 \\  
\end{ytableau}
$$
Then the map 
$\wedge^{\cols(D \cdot D')}(V) 
\overset{\fancyA_{D \cdot D'}}{\longrightarrow} 
T^N(V)$ sends
$$
(v_1 \wedge v_2) \otimes
v_3 \otimes
(v_4 \wedge v_5 \wedge v_6 \wedge v_7) \otimes
(v_8 \wedge v_9) \otimes
(v_{10} \wedge v_{11})
\longmapsto
(v_1 \otimes v_2 \otimes \cdots \otimes v_{10} \otimes v_{11}) \cdot \gamma^-_{D \cdot D'}
$$
where $\gamma^-_{D \cdot D'}$ is a particular column-antisymmetrizing element of the group
algebra $\kk[\symm_n]$ acting (on the right) on the tensor positions
in $T^N(V)$.  Specifically, 
$$
\gamma^-_{D \cdot D'}:=
\sum_{w} \sgn(w) \cdot w
$$
summing over $w$ in the group
$\symm_{\cols(D \cdot D')}$ of column permutations of $D \cdot D'$.
In this example, 
$$
\symm_{\cols(D \cdot D')}=\symm_{\{1,2\}} \times \symm_{\{3\}}
\times \symm_{\{4,5,6,7\}} \times \symm_{\{8,9\}} \times \symm_{\{10,11\}}.
$$
Meanwhile, the map 
$T^N(V)
\overset{\fancyB_{D \odot D'}}{\longrightarrow} 
S^{\rows(D \odot D')}(V)$
sends
$$
u_1 \otimes u_2 \otimes \cdots \otimes u_{10} \otimes u_{11}
\longmapsto \,\, u_1 u_4 \,\, \otimes 
\,\, u_2 u_3 u_5 u_6 u_8 u_{10}\,\, \otimes 
\,\, u_7 u_9 \,\, \otimes 
\,\, u_{11}. 
$$

Now note that $x,x'$ lie in the same column of $D \cdot D'$.
Thus in $\kk[\symm_n]$, one can factor
$$
\gamma^-_{D \cdot D'} = \epsilon \cdot (1-t_{x,x'})
$$
where $t_{x,x'}$ is the transposition of the labels
on the cells $x,x'$ (so $t_{x,x'}=t_{5,6}$ in the above example), and $\epsilon:=\sum_w \sgn(w) \cdot w$ where $w$ ranges over
any choice of coset representatives for the cosets $\symm_{\cols(D \cdot D')}/\langle t_{x,x'}\rangle$.   Hence the image of 
$\fancyA_{D \cdot D'}$ lies in $T^N(V) \cdot (1-t_{x,x'})$.

On the other hand, since $x,x'$ lie in the same row of $D \odot D'$, one
can check that  
$\fancyB_{D \odot D'}$ annihilates all of $T^N(V) \cdot (1-t_{x,x'})$.  Hence the composite $m \circ \Delta=\fancyB_{D \odot D'}\circ \fancyA_{D \cdot D'}=0$.

\vskip.1in
\noindent
{\sf Proof of (iv).}
When $D, D'$ are both skew shapes, so are all three of $D \oplus D', D \cdot D', D \odot D'$,
and hence all three Schur functors $s_{D \oplus D'},s_{D \cdot D'}, s_{D \odot D'}$ are universally free by \cite[Theorem II.2.16]{akin1982schur}.  Then Proposition~\ref{skew-concatenation-near-concatenation-identity} shows that for any coefficient ring $\kk$, the short complex in part (iii) is one of free $\kk$-modules of the form
$
0 \longrightarrow \kk^a
\overset{\Delta}{\longrightarrow} \kk^{a+b}
\overset{m}{\longrightarrow} \kk^b
\longrightarrow 0
$
for fixed integers $a,b$ independent of $\kk$.

Consequently, whenever $\kk$ is a {\it field}, this complex is short exact by dimension-counting.

We claim this implies it must also be exact when $\kk=\ZZ$:  
one has an inclusion of free $\ZZ$-modules
$\im(\Delta) \subseteq \ker(m)$ inside $\ZZ^{a+b}$, both of same rank $a$, 
and if any prime $p$ were to divide the index
$[\ker(m):\im(\Delta)]$, it would contradict exactness when $\kk=\mathbb{F}_p$.

Lastly, once one knows that for $\kk=\ZZ$ the sequence is
short exact, it must also split,
since $\ZZ^b$ is a projective $\ZZ$-module.
Then applying $(-) \otimes_\ZZ \kk$ to this split short exact sequence of free $\ZZ$-modules shows that the sequence is split short exact over any coefficient ring $\kk$.
\end{proof}

\begin{remark}
The short complexes and short exact sequences in parts (iii),(iv) of Proposition~\ref{skew-concatenation-near-concatenation-prop} are closely related to special cases
of short complexes of Specht modules for $\symm_n$ considered by James and Peel in \cite{JamesPeel}; see also Liu \cite[\S 2.2]{LiuThesis}. They are also dual to special cases of short complexes of {\it Weyl modules} for $GL(V)$ considered by Shimozono in \cite{Shimozono}.
\end{remark}

\subsection{Resolving concatenations by near-concatenations}

Note that both operations 
$\alpha \cdot \beta$ and $\alpha \odot \beta$ are associative, and that they associate with each other, meaning
$$
\begin{aligned}
\alpha \cdot (\beta \odot \gamma)& =
(\alpha \cdot \beta) \odot \gamma,\\
\alpha \odot (\beta \cdot \gamma)& =
(\alpha \odot \beta) \cdot \gamma.\\
\end{aligned}
$$
Thus one can write sequences of these
operations unambiguously without parentheses.
In particular, for any sequence
$\underline{\alpha} = (\alpha^{(1)} , \dots , \alpha^{(\ell)})$ of compositions,
one can define the compositions
$
\alpha^{(1)} \cdot \alpha^{(2)}  \cdots  \alpha^{(\ell)}
$
and 
$
\alpha^{(1)} \odot \alpha^{(2)} \odot \cdots \odot \alpha^{(\ell)}.
$

We next note that one may recast the basic ribbon Schur function identity \eqref{concatenation-near-concatenation-identity} in the form
$$
s_{\alpha \cdot \beta} =
s_{\alpha} s_{\beta} - s_{\alpha \odot \beta}
=\det\left[
\begin{matrix}
s_\alpha & s_{\alpha \odot \beta} \\
1 & s_\beta
\end{matrix}
\right]
$$
and regard it as the $\ell=2$ special case
of the following family of identities.

\begin{prop}
\label{horizontal-ribbon-Hamel-Goulden-prop}
For any sequence of compositions 
$\underline{\alpha} = (\alpha^{(1)} , \dots , \alpha^{(\ell)})$, one has 
$$
s_{\alpha^{(1)} \cdot 
\alpha^{(2)} \cdots 
\alpha^{(\ell)}  }
=\det \left[ H_{ij} \right]_{i,j=1}^\ell
\quad \text{ where } \quad
H_{i,j}:=
\begin{cases}
0 & \text{ if }i \geq j+2,\\
1 & \text{ if }i = j+1,\\
s_{\alpha^{(i)} \odot \alpha^{(i+1)} \odot \cdots \odot \alpha^{(j)}} 
& \text{ if }i \leq j.
\end{cases}
$$
\end{prop}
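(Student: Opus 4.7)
My plan is to prove this by induction on $\ell$, using cofactor expansion of the Hessenberg determinant together with the basic identity $s_\alpha s_\beta = s_{\alpha\cdot\beta}+s_{\alpha\odot\beta}$ from \eqref{concatenation-near-concatenation-identity} and the mixed associativity relations $\alpha\cdot(\beta\odot\gamma)=(\alpha\cdot\beta)\odot\gamma$ and $\alpha\odot(\beta\cdot\gamma)=(\alpha\odot\beta)\cdot\gamma$ stated just before the proposition. The base cases $\ell=1$ (trivial) and $\ell=2$ (a direct rearrangement of \eqref{concatenation-near-concatenation-identity}) cost essentially nothing.

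For the inductive step, I would expand $\det[H_{ij}]_{i,j=1}^\ell$ along its first column. Since $H_{11}=s_{\alpha^{(1)}}$, $H_{21}=1$, and $H_{i,1}=0$ for $i\geq 3$, only two terms survive:
$$
\det[H_{ij}] \;=\; s_{\alpha^{(1)}}\cdot \det[H^{(1)}_{ij}] \;-\; \det[H^{(2)}_{ij}],
$$
where $H^{(1)}$ is the minor omitting row and column $1$, and $H^{(2)}$ omits row $2$ and column $1$. The key observation is that each of these minors is itself a Hamel--Goulden matrix for a shorter sequence of compositions: $H^{(1)}$ is the matrix attached to $(\alpha^{(2)},\ldots,\alpha^{(\ell)})$, while $H^{(2)}$ is the matrix attached to $(\beta,\alpha^{(3)},\ldots,\alpha^{(\ell)})$ where $\beta:=\alpha^{(1)}\odot\alpha^{(2)}$. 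The latter identification uses associativity: the entries in the surviving first row of $H^{(2)}$ are indexed by $s_{\alpha^{(1)}\odot\alpha^{(2)}\odot\cdots\odot\alpha^{(j)}}=s_{\beta\odot\alpha^{(3)}\odot\cdots\odot\alpha^{(j)}}$.

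Applying the inductive hypothesis to both minors yields
$$
\det[H_{ij}] \;=\; s_{\alpha^{(1)}}\cdot s_{\alpha^{(2)}\cdot\alpha^{(3)}\cdots\alpha^{(\ell)}} \;-\; s_{\beta\cdot\alpha^{(3)}\cdots\alpha^{(\ell)}}.
$$
Now the basic identity \eqref{concatenation-near-concatenation-identity} applied with the pair $\bigl(\alpha^{(1)},\,\alpha^{(2)}\cdot\alpha^{(3)}\cdots\alpha^{(\ell)}\bigr)$ converts the product $s_{\alpha^{(1)}}\cdot s_{\alpha^{(2)}\cdots\alpha^{(\ell)}}$ into $s_{\alpha^{(1)}\cdot\alpha^{(2)}\cdots\alpha^{(\ell)}} + s_{\alpha^{(1)}\odot(\alpha^{(2)}\cdot\alpha^{(3)}\cdots\alpha^{(\ell)})}$. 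The mixed associativity rule $\alpha\odot(\beta\cdot\gamma)=(\alpha\odot\beta)\cdot\gamma$, applied repeatedly, rewrites the subscript of the second term as $\beta\cdot\alpha^{(3)}\cdots\alpha^{(\ell)}$, which exactly cancels the minus term and leaves $s_{\alpha^{(1)}\cdot\alpha^{(2)}\cdots\alpha^{(\ell)}}$, as desired.

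The only delicate point is correctly identifying the minor $H^{(2)}$ as again a Hamel--Goulden matrix after relabeling; this is purely bookkeeping once associativity of $\odot$ is invoked. Apart from that, the argument is routine. I would expect the paper to either present this as above, or alternatively to derive the statement as a specialization of a general Hamel--Goulden identity \cite{HamelGoulden} applied to an outer border strip decomposition of the ribbon $\sigma(\alpha^{(1)}\cdot\alpha^{(2)}\cdots\alpha^{(\ell)})$ into the pieces $\sigma(\alpha^{(i)})$; but the induction above is self-contained and uses only tools already introduced in the paper.
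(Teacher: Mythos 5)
Your proof is correct and is essentially identical to the paper's: both proceed by induction on $\ell$, expand the determinant along the first column, identify the two surviving minors as the Hamel--Goulden matrices for $(\alpha^{(2)},\ldots,\alpha^{(\ell)})$ and $(\alpha^{(1)}\odot\alpha^{(2)},\alpha^{(3)},\ldots,\alpha^{(\ell)})$, apply the inductive hypothesis, and finish with identity \eqref{concatenation-near-concatenation-identity} plus the mixed associativity of $\cdot$ and $\odot$.
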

\begin{example}
The determinantal identity in the proposition looks as follows for $\ell=3, 4$:
\begin{equation}
\label{HG-identity-for-n=3}
\begin{aligned}
s_{\alpha \cdot \beta \cdot \gamma}
&=\det\left[ 
\begin{matrix}
s_\alpha & s_{\alpha \odot \beta} &  s_{\alpha \odot \beta \odot \gamma}\\
1 & s_\beta & s_{\beta \odot \gamma} \\
0 & 1 & s_\gamma
\end{matrix}
\right] \\
&= s_\alpha s_\beta s_\gamma
-\left(\begin{matrix} s_{\alpha \odot \beta} s_\gamma  \\
+ s_\alpha s_{\beta \odot \gamma} 
\end{matrix} \right)
+ s_{\alpha \odot \beta \odot \gamma}.
\end{aligned}
\end{equation}

\begin{equation}
\label{HG-identity-for-n=4}
\begin{aligned}
s_{\alpha \cdot \beta \cdot \gamma \cdot \delta}
&=\det\left[ 
\begin{matrix}
s_\alpha & s_{\alpha \odot \beta} &  s_{\alpha \odot \beta \odot \gamma}&  s_{\alpha \odot \beta \odot \gamma \odot \delta}\\
1 & s_\beta & s_{\beta \odot \gamma} & s_{\beta \odot \gamma \odot \delta} \\
0 & 1 & s_\gamma & s_{\gamma\odot \delta} \\
0 & 0 & 1 & s_\delta
\end{matrix}
\right]\\
&= s_\alpha s_\beta s_\gamma s_\delta
-\left(\begin{matrix} s_{\alpha \odot \beta} s_\gamma s_\delta  \\
+ s_\alpha s_{\beta \odot \gamma} s_\delta\\
+s_\alpha s_\beta s_{\gamma \odot \delta}
\end{matrix} \right)
+\left(\begin{matrix} s_{\alpha \odot \beta \odot \gamma} s_\delta  \\
+ s_{\alpha \odot \beta} s_{\gamma \odot \delta}\\
+s_\alpha s_{\beta \odot \gamma \odot \delta}
\end{matrix} \right)
- s_{\alpha \odot \beta \odot \gamma \odot \delta}.
\end{aligned}
\end{equation}
\end{example}
\begin{proof}
Given $\underline{\alpha} = (\alpha^{(1)} , \dots , \alpha^{(\ell)})$, denote the matrix $H$ in the proposition as $H(\underline{\alpha})$, and define
two sequences of compositions of length $\ell-1$:
$$
\begin{aligned}
\underline{\hat{\alpha}}
&:=(\alpha^{(2)},\alpha^{(3)},
\ldots,\alpha^{(\ell)})\\
\underline{\alpha'}
&:=(\alpha^{(1)} \odot \alpha^{(2)},\alpha^{(3)},
\ldots,\alpha^{(\ell)})
\end{aligned}
$$
The proposition follows by
induction on $\ell$, via Laplace expansion
in the first column:
$$
\begin{aligned}
\det H(\underline{\alpha}) 
&=s_{\alpha^{(1)}} \cdot \det H(\underline{\hat{\alpha}}) 
- \det H(\underline{\alpha}')\\
&=s_{\alpha^{(1)}} \cdot s_{\alpha^{(2)} \cdot \alpha^{(3)} \cdots \alpha^{(\ell)}}
- s_{ \alpha^{(1)} \odot \alpha^{(2)} \cdot \alpha^{(3)} \cdots \alpha^{(\ell)}}\\
&\overset{(*)}{=}\left( 
s_{\alpha^{(1)} \cdot \alpha^{(2)} \cdot \alpha^{(3)} \cdots \alpha^{(\ell)}}
+
s_{\alpha^{(1)} \odot \alpha^{(2)} \cdot \alpha^{(3)} \cdots \alpha^{(\ell)}}
\right)
- s_{ \alpha^{(1)} \odot \alpha^{(2)} \cdot \alpha^{(3)} \cdots \alpha^{(\ell)}}\\
&=s_{\alpha^{(1)} \cdot \alpha^{(2)} \cdot \alpha^{(3)} \cdots \alpha^{(\ell)}}.
\end{aligned}
$$
Here the equality marked $(*)$
used identity \eqref{concatenation-near-concatenation-identity}.
\end{proof}

\begin{remark}
Proposition~\ref{horizontal-ribbon-Hamel-Goulden-prop} is a very special case of the {\it Hamel-Goulden} determinantal identity \cite{HamelGoulden}, which for any skew diagram $D$ expresses the skew Schur function $s_D$ as a determinant, based on any decomposition of $D$ into ribbon subdiagrams
that they call a {\it planar outside decomposition}.  Proposition~\ref{horizontal-ribbon-Hamel-Goulden-prop} is the special case of the Hamel-Goulden identity in which 
$D=\alpha^{(1)} \cdot \alpha^{(2)} \cdots
\alpha^{(\ell)}$ and the planar outside decomposition of $D$ is into the individual ribbon subdiagrams $\alpha^{(i)}$ for $i=1,2,\ldots,\ell$ inside it.
\end{remark}

We next seek to lift Proposition~\ref{horizontal-ribbon-Hamel-Goulden-prop}
to a homological result, Theorem~\ref{lem:HGcatComplex}, generalizing
Proposition~\ref{concatenation-near-concatenation-prop}
in the case $\ell=2$, and using it as a base
case for induction on $\ell$.  To this end,
given any length $\ell$ sequence $\underline{\alpha}=(\alpha^{(1)},\ldots,\alpha^{(\ell)})$ of compositions, and for any choice of subset of indices $I \subseteq [\ell-1]:=\{1,2,\ldots,\ell-1\}$, let
$\underline{\alpha}(I)$ be the length $\ell-|I|$
sequence of compositions that replaces the $i^{th}$ comma in $\underline{\alpha}$ with the $\odot$ operation,
for each $i$ in $I$.  

\begin{example}
Let  $\ell=5$ and
$\underline{\alpha}=(\alpha,\beta,\gamma,\delta,\epsilon)$.  Then
$$
\begin{aligned}
\underline{\alpha}(\varnothing)&=(\alpha,\beta,\gamma,\delta,\epsilon)=\underline{\alpha},\\
\underline{\alpha}(\{3\})
&=(\alpha,\beta,\gamma \odot \delta,\epsilon)\\
\underline{\alpha}(\{1,3\})
&=(\alpha\odot\beta,\gamma \odot \delta,\epsilon)\\
\underline{\alpha}(\{2,3\})
&=(\alpha,\beta\odot\gamma \odot \delta,\epsilon)\\
\underline{\alpha}(\{1,2,3,4\})
&=(\alpha\odot\beta\odot\gamma \odot \delta\odot\epsilon)
\end{aligned}
$$
\end{example}
\noindent
Using this notation, and also an abbreviation for
$\underline{\alpha}=(\alpha^{(1)},\ldots,\alpha^{(\ell)})$ of the product
$$s_{\underline{\alpha}}:
=s_{\alpha^{(1)}} s_{\alpha^{(2)}} \cdots s_{\alpha^{(\ell)}},
$$
one can rephrase the determinant expansion in  Proposition~\ref{horizontal-ribbon-Hamel-Goulden-prop} as follows:
\begin{equation}
    \label{rephrased-HG-determinant}
s_{\alpha^{(1)} \cdot 
\alpha^{(2)} \cdots 
\alpha^{(\ell)}  }
=\det H
=\sum_{I \subseteq [\ell-1]}
(-1)^{|I|}
s_{\underline{\alpha}(I)}
=\sum_{i=0}^{\ell-1} (-1)^i
\left( 
\sum_{\substack{I \subseteq [\ell-1]:\\ |I|=i}} s_{\underline{\alpha}(I)}
\right).
\end{equation}

\begin{definition}\label{def: Hamel-Goulden-categorification}
\rm
Given 
$\underline{\alpha}=(\alpha^{(1)},\ldots,\alpha^{(\ell)})$, define
$$
\bbs^{\underline{\alpha}}  := \bbs^{\sigma ( \alpha^{(1)})}  \otimes_\kk \cdots \otimes_\kk \bbs^{\sigma (\alpha^{(\ell)}) } .
$$
Create a cochain complex $(\HHH(\underline{\alpha}),\delta)$
whose $i^{th}$ term is 
$$
\HHH_i(\underline{\alpha})
:=\bigoplus_{\substack{I \subseteq [\ell-1]:\\ |I|=i}} \bbs^{\underline{\alpha}(I)},
$$
modeling the $i^{th}$ inner summand
on the far right of \eqref{rephrased-HG-determinant}.
Define the differential 
$$
\begin{array}{ccc}
\HHH_i(\underline{\alpha}) &\overset{\delta_i}{\xrightarrow{\qquad\qquad}} &
\HHH_{i+1}(\underline{\alpha})\\
 \Vert& &\Vert\\
  \displaystyle\bigoplus_{\substack{I \subseteq [\ell-1]:\\ |I|=i}} \bbs^{\underline{\alpha}(I)}
  &  &
 \displaystyle\bigoplus_{\substack{J \subseteq [\ell-1]:\\ |J|=i+1}} \bbs^{\underline{\alpha}(J)}
\end{array}
$$
in block matrix form as follows.
The $(I,J)$-block of $\delta_i$ is zero unless $J \supset I$. If $J \supset I$, define
$
\sgn(I,J):=(-1)^m
$
where $J=\{j_0 < j_1 < \cdots < j_i\}$
and $J = I \sqcup \{j_m\}$.  Then define
the 
$(I,J)$-block of $\delta_i$ to be  
$\sgn(I,J) \cdot \nabla^{\underline{\alpha}(J)}_{\underline{\alpha}(I)}$, where 
$$
\nabla^{\underline{\alpha}(J)}_{\underline{\alpha}(I)}:
\bbs^{\underline{\alpha}(I)} 
\rightarrow \bbs^{\underline{\alpha}(J)} 
$$
is the identity in most tensor factors,
except that it is the surjective map $m_{\alpha^{(j_m)},\alpha^{(j_{m+1})}}$ from Proposition~\ref{concatenation-near-concatenation-prop} sending the two tensor factors of 
$\bbs^{\underline{\alpha}(I)} $
that involve $\alpha^{(j_m)}, \alpha^{(j_m+1)}$ 
onto the unique tensor factor of 
$\bbs^{\underline{\alpha}(J)} $
involving $\alpha^{(j_m)} \odot \alpha^{(j_m+1)}$.

\end{definition}

The following homological lift of
Proposition~\ref{horizontal-ribbon-Hamel-Goulden-prop} or \eqref{rephrased-HG-determinant} is the main result of this section.

\begin{thm}
\label{lem:HGcatComplex}
For any sequence $\underline{\alpha}$, one has that $(\HHH(\underline{\alpha}),\delta)$ is a cochain complex, with
$$
H^0(\HHH(\underline{\alpha})) \cong
\schurfunctor^{\alpha^{(1)} \cdot \alpha^{(2)} \cdots \alpha^{(\ell)} }
$$
and which is acyclic in strictly 
positive homological degrees.
\end{thm}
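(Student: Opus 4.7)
My approach is induction on $\ell$, with the base case $\ell=1$ being trivial ($\mathcal{H}(\underline{\alpha})$ is just $\bbs^{\sigma(\alpha^{(1)})}$ concentrated in degree zero), and with the inductive step powered by a short exact sequence of complexes obtained by slicing the index set $[\ell-1]$ according to whether or not it contains the element $1$.

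Before running the induction, I would first verify that $(\mathcal{H}(\underline{\alpha}), \delta)$ is in fact a cochain complex. The signs $\sgn(I, J) = (-1)^m$ are of standard Boolean-cube type, so the usual two-term cancellation reduces $\delta^2 = 0$ to showing that for each $I \subseteq [\ell-1]$ and each pair $j < j'$ in $[\ell-1] \setminus I$, the two composites $\nabla^{I \cup \{j,j'\}}_{I \cup \{j'\}} \circ \nabla^{I \cup \{j'\}}_I$ and $\nabla^{I \cup \{j,j'\}}_{I \cup \{j\}} \circ \nabla^{I \cup \{j\}}_I$ agree as maps $\bbs^{\underline{\alpha}(I)} \to \bbs^{\underline{\alpha}(I \cup \{j,j'\})}$. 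When $|j - j'| \geq 2$ these act on disjoint pairs of tensor factors and commute on the nose. When $j' = j+1$, both paths collapse three consecutive tensor factors into one by iterated multiplication along the overlapping row, and equality follows from the associativity of multiplication in $S(V)$.

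For the inductive step, fix $\ell \geq 2$ and abbreviate
$$
\hat{\underline{\alpha}} := (\alpha^{(2)}, \ldots, \alpha^{(\ell)}),
\qquad
\underline{\alpha'} := (\alpha^{(1)} \odot \alpha^{(2)}, \alpha^{(3)}, \ldots, \alpha^{(\ell)}),
\qquad
\beta := \alpha^{(2)} \cdot \alpha^{(3)} \cdots \alpha^{(\ell)}.
$$
Splitting each $\mathcal{H}^i(\underline{\alpha}) = \bigoplus_{|I|=i} \bbs^{\underline{\alpha}(I)}$ according to whether $1 \in I$, the summands with $1 \in I$ form a subcomplex (since $\delta$ only adds elements to $I$ and cannot remove $1$), which the reindexing $I \mapsto I \setminus \{1\}$ identifies with the shift $\mathcal{H}(\underline{\alpha'})[-1]$; the quotient is naturally $\bbs^{\sigma(\alpha^{(1)})} \otimes_\kk \mathcal{H}(\hat{\underline{\alpha}})$. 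A direct check on the signs $\sgn(I,J)$, together with the observation that adjoining $1$ to $I \not\ni 1$ contributes the multiplication $m \colon \bbs^{\sigma(\alpha^{(1)})} \otimes \bbs^{\sigma(\alpha^{(2)})} \to \bbs^{\sigma(\alpha^{(1)} \odot \alpha^{(2)})}$ on the first two tensor factors, confirms that this is a short exact sequence of cochain complexes
$$
0 \to \mathcal{H}(\underline{\alpha'})[-1] \to \mathcal{H}(\underline{\alpha}) \to \bbs^{\sigma(\alpha^{(1)})} \otimes_\kk \mathcal{H}(\hat{\underline{\alpha}}) \to 0.
$$

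Now I apply the associated long exact sequence in cohomology. By the inductive hypothesis applied to the two length-$(\ell-1)$ sequences $\hat{\underline{\alpha}}$ and $\underline{\alpha'}$, together with Proposition~\ref{tensoring-with-skews-is-exact-prop} (so that $H^\ast$ commutes with tensoring by $\bbs^{\sigma(\alpha^{(1)})}$), the outer complexes have cohomology concentrated in degree zero, computing $\bbs^{\sigma(\alpha^{(1)})} \otimes \bbs^{\sigma(\beta)}$ and $\bbs^{\sigma(\alpha^{(1)} \odot \alpha^{(2)} \cdot \alpha^{(3)} \cdots \alpha^{(\ell)})} = \bbs^{\sigma(\alpha^{(1)} \odot \beta)}$ respectively (using the associativity identity $\alpha^{(1)} \odot \beta = (\alpha^{(1)} \odot \alpha^{(2)}) \cdot \alpha^{(3)} \cdots \alpha^{(\ell)}$). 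The long exact sequence then collapses to
$$
0 \to H^0(\mathcal{H}(\underline{\alpha})) \to \bbs^{\sigma(\alpha^{(1)})} \otimes_\kk \bbs^{\sigma(\beta)} \xrightarrow{\,\partial\,} \bbs^{\sigma(\alpha^{(1)} \odot \beta)} \to H^1(\mathcal{H}(\underline{\alpha})) \to 0,
$$
with $H^i(\mathcal{H}(\underline{\alpha})) = 0$ for all $i \geq 2$. A cocycle chase through the standard zig-zag definition of $\partial$---lift a class from the quotient to $\mathcal{H}^0(\underline{\alpha})$, apply $\delta_0$, and extract the $J=\{1\}$ component in the subcomplex---identifies $\partial$ with the multiplication map $m$ of Proposition~\ref{concatenation-near-concatenation-prop} applied to the pair $(\alpha^{(1)}, \beta)$. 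Proposition~\ref{concatenation-near-concatenation-prop} then supplies both $\ker(\partial) = \bbs^{\sigma(\alpha^{(1)} \cdot \beta)} = \bbs^{\sigma(\alpha^{(1)} \cdot \alpha^{(2)} \cdots \alpha^{(\ell)})}$ and the surjectivity of $\partial$, completing the induction.

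The main obstacle I expect is the sign bookkeeping: exhibiting the subcomplex $\mathcal{H}(\underline{\alpha'})[-1]$ with the correct sign twist relative to the ambient $\delta$, and verifying that the cocycle chase identifying $\partial$ with $m$ produces precisely the multiplication map (and not some nonzero scalar multiple or sign-twisted variant). Both are finite combinatorial checks arising from the $(-1)^m$-conventions on Boolean subsets and the cohomological shift, but they demand careful tracking.
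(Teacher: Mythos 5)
Your proposal is correct and is essentially the paper's own argument. The paper sets up the inductive step by presenting $\HHH(\underline{\alpha})$ as the mapping cone of a morphism $\Phi\colon \bbs^{\sigma(\alpha^{(1)})}\otimes_\kk\HHH(\hat{\underline{\alpha}})\to\HHH(\underline{\alpha}')$, which yields precisely your short exact sequence obtained by slicing the index sets according to whether $1\in I$; from there the long-exact-sequence analysis, the appeal to Proposition~\ref{tensoring-with-skews-is-exact-prop}, and the identification of the connecting homomorphism with the map $m$ of Proposition~\ref{concatenation-near-concatenation-prop} are the same (the paper simply starts the induction at $\ell=2$ rather than $\ell=1$, using that case directly as Proposition~\ref{concatenation-near-concatenation-prop}).
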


In other words, $\HHH(\underline{\alpha})$ gives a (right or co-)resolution of 
the $GL(V)$-module $\schurfunctor^{\alpha^{(1)} \cdot \alpha^{(2)} \cdots \alpha^{(\ell)} }$:
$$
0 \to 
\schurfunctor^{\alpha^{(1)} \cdot \alpha^{(2)} \cdots \alpha^{(\ell)} }
\to
\HHH(\underline{\alpha})
\to 0.
$$

\begin{example}
With notation $\otimes_\kk =:\otimes$, the $n=3$ lifting of \eqref{HG-identity-for-n=3} looks as follows:
\begin{equation}
\label{lift-of-HG-for-n=3}
	0 \to 
	\bbs^{\sigma ( \alpha , \beta , \gamma)} 
	\to 
	\bbs^{\sigma (\alpha)}  \otimes \bbs^{\sigma (\beta)}  \otimes \bbs^{\sigma (\gamma)} 
	\xrightarrow[]{\left[\begin{matrix} \nabla^{\{1\}}_{\varnothing} \\ \nabla^{\{2\}}_{\varnothing} \end{matrix}\right]}
	\begin{matrix}
	\bbs^{\sigma (\alpha \odot \beta)}  \otimes \bbs^{\sigma (\gamma)} \\
	\oplus\\
	\bbs^{\sigma (\alpha)} \otimes \bbs^{\sigma (\beta \odot \gamma)}
	\end{matrix}
	\xrightarrow{\left[\begin{matrix} \nabla^{\{1,2\}}_{\{1\}} & \nabla^{\{1,2\}}_{\{2\}} \end{matrix}\right]}
	\bbs^{\sigma (\alpha \odot \beta \odot \gamma)}
	\to 0
\end{equation}
\end{example}

\begin{proof}[Proof of Theorem~\ref{lem:HGcatComplex}]
To see that $\HHH(\underline{\alpha})$ is a complex, the definition of $\sgn(I,J)$ reduces to checking that for any $j,j' \not\in I$ one has
commutativity of the appropriate $\nabla$ maps in this diamond:
$$
  \begin{tikzcd}[sep=small]
   & \schurfunctor^{\underline{\alpha}(I)}
   \arrow{dl}\arrow[rightarrow]{dr}
     &   \\  
   \schurfunctor^{\underline{\alpha}(I \sqcup\{j\})} \arrow{dr} 
   & & \schurfunctor^{\underline{\alpha}(I\sqcup\{j'\})} \arrow{dl} \\
   & \schurfunctor^{\underline{\alpha}(I\sqcup\{j,j'\})}
     & 
  \end{tikzcd}
$$
This commutativity is easy when $|j'-j| \geq 2$. For $|j'-j|=1$, it follows from
commutativity of the appropriate surjections
$m_{\alpha,\beta}$ from Proposition~\ref{concatenation-near-concatenation-prop} appearing here:
\begin{equation}
    \label{important-commutativity-diamond}
  \begin{tikzcd}[column sep=small]
   & \schurfunctor^{\sigma(\alpha)}
   \otimes_\kk \schurfunctor^{\sigma(\beta)}
   \otimes_\kk \schurfunctor^{\sigma(\gamma)}
   \arrow{dl}\arrow{dr}
     &   \\  
   \schurfunctor^{\sigma(\alpha \odot \beta)}
   \otimes_\kk \schurfunctor^{\sigma(\gamma)} \arrow{dr} 
   & &
    \schurfunctor^{\sigma(\alpha)}
   \otimes_\kk \schurfunctor^{\sigma(\beta \odot \gamma)}
   \arrow{dl}\\
   & \schurfunctor^{\sigma(\alpha \odot \beta \odot \gamma)}
     & 
  \end{tikzcd}
\end{equation}


For the remaining assertions, proceed by induction on $\ell$, with base case given by the case $\ell = 2$, which one can check is Proposition \ref{concatenation-near-concatenation-prop}. 
In the inductive step, recall from the proof of
Proposition~\ref{horizontal-ribbon-Hamel-Goulden-prop}
these shorter sequences of compositions
$$
\begin{aligned}
\underline{\hat{\alpha}}
&:=(\alpha^{(2)},\alpha^{(3)},
\ldots,\alpha^{(\ell)})\\
\underline{\alpha'}
&:=(\alpha^{(1)} \odot \alpha^{(2)},\alpha^{(3)},
\ldots,\alpha^{(\ell)})
\end{aligned}
$$
whose associated matrices $H(\underline{\alpha})$
satisfied an identity, coming from Laplace expansion:
$$
\det H(\underline{\alpha}) 
=s_{\alpha^{(1)}} \cdot \det H(\underline{\hat{\alpha}}) 
- \det H(\underline{\alpha}').
$$
We next lift this relation homologically.
Define a morphism of complexes
$$
\Phi : 
\bbs^{\sigma (\alpha^{(1)})} \otimes_\kk
 \HHH( \underline{\hat{\alpha}} )
 \to 
\HHH(\underline{\alpha}')
$$
by having it map between their terms indexed by $I \subseteq [\ell-2]$ as follows:
$$
\nabla_{(\alpha^{(1)} , \underline{\hat{\alpha} }(I))}^{\underline{\alpha' }(I)}:
\bbs^{\sigma (\alpha^{(1)})}  \otimes_\kk \bbs^{\underline{\hat{\alpha}}(I)} 
\longrightarrow
\bbs^{\underline{\alpha}'(I)}
$$
Checking $\Phi$ is a morphism of complexes means checking for $j \not\in I$ that
this commutes:
$$
\begin{tikzcd}[column sep =small] 
\bbs^{\sigma (\alpha^{(1)})} \otimes_\kk \bbs^{\underline{\hat{\alpha}}(I)} \arrow{rr}{\nabla} \arrow{d}{1 \otimes \nabla} & &
\bbs^{\underline{\alpha}'(I)}\arrow{d}{1 \otimes \nabla} \\
\bbs^{\sigma (\alpha^{(1)})} \otimes_\kk \bbs^{\underline{\hat{\alpha}}(I \sqcup \{j\})} \arrow{rr}{\nabla} & &
\bbs^{\underline{\alpha}'(I \sqcup \{j\})} \\
 \end{tikzcd}
$$
This is easy for $j \geq 2$, and for $j=1$
is commutativity of \eqref{important-commutativity-diamond}
with $(\alpha,\beta,\gamma)=(\alpha^{(1)},\alpha^{(2)},\alpha^{(3)})$. 


By construction one has an isomorphism between $\HHH(\underline{\alpha})$ and  
the {\it mapping cone} of $\Phi$:
$$
\cone (\Phi)  \cong \HHH(\underline{\alpha}).
$$
Hence there is a short exact sequence of cochain complexes
$$
\begin{array}{rcccccl}
0 \to 
&\HHH(\underline{\alpha}')[-1]
&\to &\cone (\Phi)& \to & 
\bbs^{\sigma (\alpha^{(1)})} (V) \otimes_\kk \HHH(\underline{\hat{\alpha}} )& 
\to 0.\\
 &\Vert&&\Vert&&\Vert& \\
 &A&&B&&C
\end{array}
$$
leading to a long exact sequence in cohomology:
\begin{equation}
    \label{LES-in-cohomology}
0 \to H^0 A \to H^0 B \to H^0 C
\to H^1 A \to H^1 B \to H^1 C
\to \cdots
\end{equation}
Applying the inductive hypothesis for $\HHH(\underline{\alpha}'), \HHH(\underline{\hat{\alpha}})$
and 
Proposition~\ref{tensoring-with-skews-is-exact-prop} to the subsequences
$$
\begin{array}{ccccc}
H^i A &\to& H^i B &\to& H^i C\\
\Vert& &\Vert& &\Vert\\
H^{i-1} \HHH(\underline{\alpha}')
& &H^i \HHH(\underline{\alpha})& 
& H^i( \schurfunctor^{\alpha^{(1)}} \otimes_\kk \HHH(\underline{\hat{\alpha}}))\\
\Vert& & & &\Vert\\
0& & & & \schurfunctor^{\alpha^{(1)}} \otimes_\kk  H^i \HHH(\underline{\hat{\alpha}}) \\
& & & &\Vert\\
& & & & 0
\end{array}
$$
shows that
$H^i\HHH(\underline{\alpha})=0$ 
for $i \geq 2$.  On the other hand,
the outer terms in \eqref{LES-in-cohomology} also vanish:
$$
\begin{aligned}
H^0A& = H^{-1}\HHH(\underline{\alpha}') =0,\\
H^1C&=H^1 ( \schurfunctor^{\alpha^{(1)}} \otimes_\kk \HHH(\underline{\hat{\alpha}}))
= \schurfunctor^{\alpha^{(1)}} \otimes_\kk  H^1 \HHH(\underline{\hat{\alpha}})=0
\end{aligned}
$$
where the second vanishing again used the inductive hypothesis
for $\HHH(\underline{\hat{\alpha}})$
and 
Proposition~\ref{tensoring-with-skews-is-exact-prop}.
Thus the long exact sequence starts
$$
\begin{array}{rcccccccl}
0 \to& H^0 B &\to& H^0 C
&\to& H^1 A &\to& H^1 B &\to 0\\
 &\Vert& &\Vert& &\Vert& &\Vert&  \\
0 \to &H^0 \HHH(\underline{\alpha})
&\to&H^0( \schurfunctor^{\alpha^{(1)}} \otimes_\kk \HHH(\underline{\hat{\alpha}}))
&\to&H^0 \HHH(\underline{\alpha}')
&\to&H^1 \HHH(\underline{\alpha})
&\to 0\\
 & & &\Vert& &\Vert& & &  \\
 & & & 
\schurfunctor^{\alpha^{(1)}} \otimes_\kk \schurfunctor^{\alpha^{(2)} \cdots \alpha^{(\ell)}}  
&\overset{f}{\to}&
\schurfunctor^{\alpha^{(1)} \odot \alpha^{(2)} \cdots \alpha^{(\ell)}}
& & & 
\end{array}
$$
where the bottommost equalities
used the inductive hypothesis on $H^0$
applied to 
$\HHH(\underline{\alpha}'), \HHH(\underline{\hat{\alpha}})$.

One can then check that the map labeled $f$ above, which is a connecting homomorphism in the long exact sequence, actually coincides with the surjective map $m_{\alpha,\beta}$ 
in this instance of the short 
exact sequence of Proposition \ref{concatenation-near-concatenation-prop} with
$\alpha=\alpha^{(1)}$ and
$\beta = \alpha^{(2)} \cdot \alpha^{(3)} \cdots \alpha^{(\ell)}$:
$$
0 \to
\bbs^{\sigma (\alpha^{(1)}  \cdot 
\alpha^{(2)} \cdots \alpha^{(\ell)})} 
\overset{\Delta_{\alpha,\beta}}{\longrightarrow}
\schurfunctor^{\alpha^{(1)}} \otimes_\kk \schurfunctor^{\alpha^{(2)} \cdots \alpha^{(\ell)}}  
\overset{m_{\alpha,\beta}}{\longrightarrow}
\schurfunctor^{\alpha^{(1)} \odot \alpha^{(2)} \cdots \alpha^{(\ell)}}
\to 0
$$
This gives the last two desired conclusions of the theorem:
$$
\begin{array}{rcccccl}
H^1 \HHH(\underline{\alpha})
&=&\coker(f) &\cong& \coker(m)&=& 0,\\
H^0 \HHH(\underline{\alpha})
&=&\ker(f)  &\cong& \ker(m)&\cong& \bbs^{\sigma (\alpha^{(1)}  \cdot 
\alpha^{(2)} \cdots \alpha^{(\ell)})} .\qedhere
\end{array}
$$
\end{proof}

The following result arises from studying the $\ell=3$ case of the complex introduced in Definition \ref{def: Hamel-Goulden-categorification}. The utility of the following proposition will become apparent in the $\Tor$ computations in Sections \ref{resolution-subsection} and \ref{tor-between-the-modules-subsection}.

\begin{prop}\label{prop:intersectingSchurs}
For any three compositions $(\alpha, \beta, \gamma)$ one has an equality
$$
\bbs^{\sigma(\alpha,\beta,\gamma)} =
\left( \bbs^{\sigma (\alpha , \beta)}  \otimes_\kk \bbs^{\sigma(\gamma)}  \right) \cap \left( \bbs^{\sigma (\alpha)}  \otimes_\kk \bbs^{\sigma(\beta,\gamma)}  \right),
$$
where the intersection is taken inside 
$$
\bbs^{\sigma (\alpha)} 
\otimes_\kk 
\bbs^{\sigma(\beta)} 
\otimes_\kk 
\bbs^{\sigma(\gamma)}.
$$ 
\end{prop}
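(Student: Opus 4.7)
The plan is to express all three of the modules $\bbs^{\sigma(\alpha,\beta,\gamma)}$, $\bbs^{\sigma(\alpha,\beta)} \otimes_\kk \bbs^{\sigma(\gamma)}$, and $\bbs^{\sigma(\alpha)} \otimes_\kk \bbs^{\sigma(\beta,\gamma)}$ as kernels of explicit morphisms already appearing in the preceding results, and then observe that an intersection of kernels is the kernel of the combined map. The two ingredients I would use are Proposition~\ref{concatenation-near-concatenation-prop} (together with the exactness statement in Proposition~\ref{tensoring-with-skews-is-exact-prop}), and the $\ell=3$ case of Theorem~\ref{lem:HGcatComplex} already displayed in \eqref{lift-of-HG-for-n=3}.

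First I would apply Proposition~\ref{concatenation-near-concatenation-prop} to the pair $(\alpha,\beta)$ to obtain the split short exact sequence
$$0 \to \bbs^{\sigma(\alpha,\beta)} \xrightarrow{\Delta} \bbs^{\sigma(\alpha)} \otimes_\kk \bbs^{\sigma(\beta)} \xrightarrow{m_{\alpha,\beta}} \bbs^{\sigma(\alpha \odot \beta)} \to 0,$$
and then tensor on the right over $\kk$ with $\bbs^{\sigma(\gamma)}$. By Proposition~\ref{tensoring-with-skews-is-exact-prop}, tensoring with a skew Schur functor over $\kk$ preserves exactness, which identifies
$$\bbs^{\sigma(\alpha,\beta)} \otimes_\kk \bbs^{\sigma(\gamma)} \;=\; \ker\bigl(m_{\alpha,\beta} \otimes 1\bigr) \;\subseteq\; \bbs^{\sigma(\alpha)} \otimes_\kk \bbs^{\sigma(\beta)} \otimes_\kk \bbs^{\sigma(\gamma)}.$$
Symmetrically, tensoring the $(\beta,\gamma)$-version of the same short exact sequence on the left with $\bbs^{\sigma(\alpha)}$ gives $\bbs^{\sigma(\alpha)} \otimes_\kk \bbs^{\sigma(\beta,\gamma)} = \ker(1 \otimes m_{\beta,\gamma})$ inside the same triple tensor product.

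Next I would invoke the $\ell = 3$ case of Theorem~\ref{lem:HGcatComplex}: exactness in cohomological degree zero of the complex $\HHH(\alpha,\beta,\gamma)$ asserts precisely that $\bbs^{\sigma(\alpha,\beta,\gamma)}$ coincides with the kernel of the differential
$$\delta_0 = \left[\begin{matrix} m_{\alpha,\beta} \otimes 1 \\ 1 \otimes m_{\beta,\gamma}\end{matrix}\right]: \bbs^{\sigma(\alpha)} \otimes_\kk \bbs^{\sigma(\beta)} \otimes_\kk \bbs^{\sigma(\gamma)} \longrightarrow \bigl(\bbs^{\sigma(\alpha \odot \beta)} \otimes_\kk \bbs^{\sigma(\gamma)}\bigr) \oplus \bigl(\bbs^{\sigma(\alpha)} \otimes_\kk \bbs^{\sigma(\beta \odot \gamma)}\bigr).$$
Since the kernel of a vector-valued map is the intersection of the kernels of its components, combining this with the previous paragraph yields the desired equality.

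The only point warranting attention is that the three resulting inclusions into the common triple tensor product --- namely $\Delta \otimes 1$, $1 \otimes \Delta$, and the inclusion of $\ker(\delta_0) \cong \bbs^{\sigma(\alpha,\beta,\gamma)}$ --- are genuinely the same embeddings, so that the intersection is computed as a literal intersection of submodules rather than just up to isomorphism. I expect this to follow immediately from the fact that all three injections are built by iterating the map $\Delta$ from Proposition~\ref{skew-concatenation-near-concatenation-prop}(i), whose construction via the antisymmetrization maps $\fancyA$ is coassociative; unwinding that compatibility is the only mild obstacle I foresee.
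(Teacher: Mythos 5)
Your proof is correct and is essentially the paper's own argument: it extracts the equality from exactness at the left and middle terms of the $\ell=3$ case \eqref{lift-of-HG-for-n=3} of Theorem~\ref{lem:HGcatComplex}, then identifies the two component kernels with $\bbs^{\sigma(\alpha,\beta)}\otimes_\kk\bbs^{\sigma(\gamma)}$ and $\bbs^{\sigma(\alpha)}\otimes_\kk\bbs^{\sigma(\beta,\gamma)}$ via Proposition~\ref{concatenation-near-concatenation-prop} together with Proposition~\ref{tensoring-with-skews-is-exact-prop}. The caveat you raise at the end about the three embeddings into the common triple tensor product being literally compatible (via coassociativity of $\Delta$) is a reasonable point that the paper's terse proof does not spell out, but it is not a gap.
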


\begin{proof}

Exactness in the left and middle terms of
\eqref{lift-of-HG-for-n=3} shows that
$$
\bbs^{\sigma(\alpha,\beta,\gamma)} =
\ker( \nabla^{\{1\}}_{\varnothing} )
\cap \ker( \nabla^{\{2\}}_{\varnothing} ).
$$
Then Proposition~\ref{tensoring-with-skews-is-exact-prop}
together with exactness on the left in
Proposition~\ref{concatenation-near-concatenation-prop} identifies the kernels of
$\nabla^{\{1\}}_{\varnothing},
\nabla^{\{2\}}_{\varnothing}$
with the terms intersected on the right
in the proposition.
\end{proof}

\section{The complex of ribbons and proof of Theorem~\ref{skew-Schur-functor-theorem}}
\label{resolution-section}

The goal of this section is to produce an explicit $GL(V)$-equivariant minimal $R$-free resolution of $M$,
where $R=\veralg{d}$ and $M=\vermod{d}{ r}$, with form as predicted by Theorem~\ref{skew-Schur-functor-theorem}.

\subsection{The complex of ribbons}
\label{ribbon-complex-subsection}

The {\it complex of ribbons} was mentioned
in the Introduction.  It has a differential
induced from a simple tensor-degree-lowering map (which is {\it not} itself a differential) on a tensor algebra.
Specifically,
define a tensor-degree-lowering $S$-linear endomorphism $\partial$ on 
$$
S \otimes_\kk T_\kk(S):=\bigoplus_{\ell=0}^\infty
S \otimes_\kk T_\kk^{\ell}(S)
$$
via this formula: 
$$
\begin{array}{rcl}
S \otimes_\kk T_\kk(S) &\overset{\partial}{\longrightarrow}& S \otimes_\kk T_\kk(S) \\
s \otimes_\kk (s_1 \otimes_\kk s_2 \otimes_\kk s_3 \otimes_\kk \cdots \otimes_\kk s_\ell) &\longmapsto& 
s \cdot s_1 \otimes_\kk (s_2 \otimes_\kk s_3 \otimes_\kk\cdots \otimes_\kk s_\ell)
\end{array}
$$

The definition in Section~\ref{general-linear-review-section}
of $\schurfunctor^{\sigma(\alpha)}$,
gives an inclusion of
(free) $\kk$-modules
$$
\schurfunctor^{\sigma(\alpha)}
\subseteq 
S^{\alpha_1} \otimes \cdots \otimes S^{\alpha_\ell}
\subseteq 
T_\kk^\ell(S)
$$
and hence also $S$-module inclusions
$$
S \otimes_\kk \schurfunctor^{\sigma(\alpha)}
\quad \subseteq \quad 
S \otimes_\kk \left( S^{\alpha_1} \otimes \cdots \otimes S^{\alpha_\ell} \right)
\quad \subseteq \quad 
S \otimes T_\kk^\ell(S) 
$$

\begin{definition} \rm
Define for each $\ell \geq 1$ an $S$-submodule $\fancyR_{\ell}$ of $S \otimes T_\kk^\ell(S)$ by
$$
\fancyR_{\ell}
:=S\otimes_\kk \left( \bigoplus_{\alpha} \schurfunctor^{\sigma(\alpha)} \right),
$$
where the direct sum is over compositions $\alpha=(\alpha_1,\ldots,\alpha_\ell)$ of length $\ell$. Compile them as
$$
\fancyR_{\bullet} := \bigoplus_{\ell =1}^\infty \fancyR_\ell.
$$
\end{definition}

\begin{prop}\label{prop:complexofRibbons}
The $S$-module endomorphism
$\partial$ on $S \otimes T_\kk(S)$
has these properties:
\begin{itemize}
\item[(i)] It restricts to an $S$-module endomorphism of $\fancyR_{\bullet}$.
\item[(ii)] Upon restriction to $\fancyR_{\bullet}$, it
satisfies $\partial^2=0$.
\end{itemize}
\end{prop}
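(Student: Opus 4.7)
The plan is to exploit two concatenation decompositions of the ribbon $\sigma(\alpha)=\sigma(\alpha_1,\ldots,\alpha_\ell)$: namely $\sigma(\alpha)=\sigma(\alpha_1)\cdot\sigma(\alpha_2,\ldots,\alpha_\ell)$ for part (i), and $\sigma(\alpha)=\sigma(\alpha_1,\alpha_2)\cdot\sigma(\alpha_3,\ldots,\alpha_\ell)$ for part (ii). Both decompositions, via the injective map $\Delta$ of Proposition~\ref{skew-concatenation-near-concatenation-prop}(i), yield inclusions of $\schurfunctor^{\sigma(\alpha)}$ into tensor products of Schur functors attached to shorter ribbons, which is precisely the leverage one needs.

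For (i), the first decomposition gives
$$
\schurfunctor^{\sigma(\alpha)} \hookrightarrow \schurfunctor^{\sigma(\alpha_1)} \otimes_\kk \schurfunctor^{\sigma(\alpha_2,\ldots,\alpha_\ell)} = S^{\alpha_1} \otimes_\kk \schurfunctor^{\sigma(\alpha_2,\ldots,\alpha_\ell)}.
$$
Writing $f \in \schurfunctor^{\sigma(\alpha)}$ under this inclusion as $f = \sum_i c_i \otimes d_i$ with $c_i \in S^{\alpha_1}$ and $d_i \in \schurfunctor^{\sigma(\alpha_2,\ldots,\alpha_\ell)}$, one directly computes $\partial(s \otimes f) = \sum_i (s\cdot c_i) \otimes d_i \in S \otimes_\kk \schurfunctor^{\sigma(\alpha_2,\ldots,\alpha_\ell)} \subseteq \fancyR_{\ell-1}$.

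For (ii), the second decomposition yields $\schurfunctor^{\sigma(\alpha)} \hookrightarrow \schurfunctor^{\sigma(\alpha_1,\alpha_2)} \otimes_\kk \schurfunctor^{\sigma(\alpha_3,\ldots,\alpha_\ell)}$. The crucial input is that Proposition~\ref{concatenation-near-concatenation-prop} applied to the one-part compositions $(\alpha_1),(\alpha_2)$ identifies $\schurfunctor^{\sigma(\alpha_1,\alpha_2)}$ with the kernel of the multiplication map $m : S^{\alpha_1}\otimes_\kk S^{\alpha_2} \to S^{\alpha_1+\alpha_2}$; indeed, since $(\alpha_1)\odot(\alpha_2)=(\alpha_1+\alpha_2)$, the short exact sequence there reads $0 \to \schurfunctor^{\sigma(\alpha_1,\alpha_2)} \to S^{\alpha_1}\otimes_\kk S^{\alpha_2} \xrightarrow{m} S^{\alpha_1+\alpha_2} \to 0$. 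Thus any $f\in\schurfunctor^{\sigma(\alpha)}$, viewed inside $S^{\alpha_1}\otimes_\kk S^{\alpha_2}\otimes_\kk \schurfunctor^{\sigma(\alpha_3,\ldots,\alpha_\ell)}$, satisfies $(m\otimes \mathrm{id})(f)=0$. A direct computation shows $\partial^2$ acts as $s\otimes(a\otimes b\otimes \xi) \mapsto (s\cdot a\cdot b)\otimes \xi$, so $\partial^2$ factors through multiplication on the first two tensor slots, forcing $\partial^2(s\otimes f)=0$.

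No serious obstacle is expected: both parts reduce mechanically to Proposition~\ref{skew-concatenation-near-concatenation-prop}(i) together with the $\ell=2$ case of Proposition~\ref{concatenation-near-concatenation-prop}. The one conceptual observation worth emphasizing is that the vanishing in (ii) is engineered precisely by the kernel description of $\schurfunctor^{\sigma(\alpha_1,\alpha_2)}$, which is exactly the feature distinguishing $\fancyR_\bullet$ from the ambient $S\otimes_\kk T_\kk(S)$, where $\partial^2$ genuinely fails to vanish.
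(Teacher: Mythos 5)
Your argument for part (i) matches the paper's exactly: both observe that the restriction of $\partial$ to $\schurfunctor^{\sigma(\alpha)} \cong S^0 \otimes_\kk \schurfunctor^{\sigma(\alpha)}$ coincides with the inclusion $\Delta$ into $S^{\alpha_1} \otimes_\kk \schurfunctor^{\sigma(\hat\alpha)}$ from Proposition~\ref{concatenation-near-concatenation-prop}, and then invoke $S$-linearity. For part (ii), however, you take a genuinely more economical route. The paper invokes the $\ell=3$ instance of the Hamel--Goulden exact sequence \eqref{lift-of-HG-for-n=3} (i.e.\ Theorem~\ref{lem:HGcatComplex}) with $\underline{\alpha}=((\alpha_1),(\alpha_2),\hat{\hat\alpha})$, and reads off from exactness that $\nabla^{\{1\}}_{\varnothing}\circ i = 0$, which is the vanishing of $\partial^2$. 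You instead use only the $\ell=2$ case of Proposition~\ref{concatenation-near-concatenation-prop} twice: once to embed $\schurfunctor^{\sigma(\alpha)}$ into $\schurfunctor^{\sigma(\alpha_1,\alpha_2)}\otimes_\kk \schurfunctor^{\sigma(\hat{\hat\alpha})}$ via the decomposition $\sigma(\alpha)=\sigma(\alpha_1,\alpha_2)\cdot\sigma(\hat{\hat\alpha})$, and once to identify $\schurfunctor^{\sigma(\alpha_1,\alpha_2)}=\ker\bigl(m:S^{\alpha_1}\otimes_\kk S^{\alpha_2}\to S^{\alpha_1+\alpha_2}\bigr)$, after which the formula for $\partial^2$ as ``multiply the first two slots'' gives the vanishing immediately. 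Both arguments boil down to the same underlying relation $m\circ\Delta=0$, but yours bypasses the exactness of the $\ell=3$ complex (which the paper proves by a mapping-cone induction), so it is self-contained modulo the split short exact sequence alone. The one point worth making explicit, which both you and the paper elide, is that the composite embedding $\schurfunctor^{\sigma(\alpha)} \hookrightarrow \schurfunctor^{\sigma(\alpha_1,\alpha_2)}\otimes_\kk \schurfunctor^{\sigma(\hat{\hat\alpha})} \hookrightarrow S^{\alpha_1}\otimes_\kk\cdots\otimes_\kk S^{\alpha_\ell}$ really is the standard inclusion used to define $\fancyR_\bullet$; this follows from the compatibility of the factorizations of $\fancyA_{\sigma(\alpha)}$ through partial antisymmetrizations, but deserves a sentence.
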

For this reason, we will call $(\fancyR_\bullet, \partial)$
the {\it complex of ribbons}.

\begin{proof}
To prove (i),
for
$\alpha=(\alpha_1,\alpha_2,\ldots,\alpha_\ell)$,
let $\hat{\alpha}:=(\alpha_2,\ldots,\alpha_\ell)$, and
note $\partial$ gives a map
$$
\bbs^{\sigma(\alpha)}
\cong
S^0 \otimes_\kk \bbs^{\sigma(\alpha)}
\overset{\partial}{\longrightarrow} 
S^{\alpha_1} \otimes_\kk \bbs^{\sigma(\hat{\alpha})}
$$
which is the same as the
map $\Delta$ from a special case of Proposition~\ref{concatenation-near-concatenation-prop}.
Consequently, by $S$-linearity,
$\partial$ maps
$S \otimes_\kk \bbs^{\sigma(\alpha)}
\to
S \otimes_\kk \bbs^{\sigma(\hat{\alpha})},
$
so that it restricts to $\fancyR_\bullet$.

To prove (ii), introduce $\hat{\hat{\alpha}}:=(\alpha_3,\alpha_4,\ldots,\alpha_\ell)$,
and note that, by $S$-linearity,
it suffices to check that this restricted composite $\partial^2=0$:
$$
\bbs^{\sigma(\alpha)}
\cong
S^0 \otimes_\kk \bbs^{\sigma(\alpha)}
\overset{\partial}{\longrightarrow} 
S^{\alpha_1} \otimes_\kk \bbs^{\sigma(\hat{\alpha})}
\overset{\partial}{\longrightarrow} 
S^{\alpha_1+\alpha_2} \otimes_\kk \bbs^{\sigma(\hat{\hat{\alpha}})}.
$$
We claim $\partial^2=0$ here, because one can check $\partial^2$ coincides with the composite of two maps
$\nabla^{\{1\}}_{\varnothing} \circ i$ inside a particular instance of the 
exact sequence \eqref{lift-of-HG-for-n=3} where
$\underline{\alpha}:=( (\alpha_1), (\alpha_2), \hat{\hat{\alpha}} )$:

$$
	\bbs^{\sigma (\alpha)} 
	\overset{i}{\longrightarrow} 
	S^{\alpha_1}  \otimes_\kk S^{\alpha_2}  \otimes_\kk \bbs^{\sigma(\hat{\hat{\alpha}})}	\xrightarrow[]{\left[\begin{matrix} \nabla^{\{1\}}_{\varnothing} \\ \nabla^{\{2\}}_{\varnothing} \end{matrix}\right]}
	\begin{matrix}
	S^{\alpha_1+\alpha_2} \otimes_\kk \bbs^{\sigma (\hat{\hat{\alpha}})} \\
	\oplus\\
	S^{\alpha_1} \otimes_\kk \bbs^{\sigma( (\alpha_2) \odot \hat{\hat{\alpha}})}
	\end{matrix}
$$
Exactness here implies vanishing of both composites
$\nabla^{\{1\}}_{\varnothing} \circ i
=0
=\nabla^{\{2\}}_{\varnothing} \circ i$,
so $\partial^2=0$.
\end{proof}

The next lemma analyzes some of the cycles and
boundaries within certain summands of the complex of ribbons $(\fancyR_\bullet,\partial)$, and forms a key step in many of our later results.  To state it, recall that for 
a composition $\alpha = (\alpha_1 , \alpha_2, \dots , \alpha_\ell)$,
letting $\hat{\alpha}:=(\alpha_2,\ldots,\alpha_\ell)$ as usual, one can restrict $\partial$ in the complex of ribbons $\fancyR_\bullet$ to a map
$$
\partial^\alpha : 
S \otimes_\kk \bbs^{\sigma(\alpha)}  \to S \otimes_\kk \bbs^{\sigma (\hat{\alpha})}.
$$
The map $\partial$ and this restriction $\partial^\alpha$ are homogeneous with respect to the usual grading on $\fancyR$ inherited from the
grading on $S \otimes_\kk T_\kk(S)$, where
$S^p \otimes_\kk \bbs^{\sigma(\alpha)}$
lies within the homogenenous component of degree
$p+|\alpha|$. The corresponding homogeneous component of the map $\partial^\alpha$ is therefore a map
$$
(\partial^\alpha)_{p+|\alpha|}: 
S^p \otimes_\kk \bbs^{\sigma(\alpha)}
\rightarrow 
S^{p+\alpha_1} \otimes_\kk \bbs^{\sigma(\hat{\alpha})}.
$$

\begin{lemma}\label{lem:ribbonCxLemma}
Let $p>0$ be any positive integer.
\begin{enumerate}
\item[(i)]
One has a $GL(V)$-isomorphism
$$
\ker (\partial^\alpha)_{p + |\alpha|} \cong \bbs^{\sigma (p,\alpha)} .
$$
\item[(ii)]
Moreover, one has equalities
$$
\ker (\partial^\alpha)_{p + |\alpha|} 
= \im (\partial^{(q,\alpha)})_{p  + |\alpha|}  \quad \textrm{for all} \ 0<q \leq p.$$ 
\end{enumerate}
\end{lemma}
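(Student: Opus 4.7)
The plan is to identify both restricted differentials $(\partial^\alpha)_{p+|\alpha|}$ and $(\partial^{(q,\alpha)})_{p+|\alpha|}$ with the concatenation inclusion $\Delta$ and near-concatenation surjection $m$ of Proposition~\ref{skew-concatenation-near-concatenation-prop}, and then to invoke the split short exact sequence of Proposition~\ref{concatenation-near-concatenation-prop}.

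For part~(i), by $S$-linearity and the identification observed in the proof of Proposition~\ref{prop:complexofRibbons}(i), the map $(\partial^\alpha)_{p+|\alpha|}$ factors as
\[
S^p\otimes\bbs^{\sigma(\alpha)}
\xrightarrow{\,1\otimes\Delta\,}
S^p\otimes S^{\alpha_1}\otimes\bbs^{\sigma(\hat\alpha)}
\xrightarrow{\,\mu\otimes 1\,}
S^{p+\alpha_1}\otimes\bbs^{\sigma(\hat\alpha)},
\]
where $\mu\colon S^p\otimes S^{\alpha_1}\to S^{p+\alpha_1}$ is the algebra multiplication. On the other hand, Proposition~\ref{skew-concatenation-near-concatenation-prop}(ii) applied with $D=\sigma(p)$ and $D'=\sigma(\alpha)$ produces a surjection $m\colon S^p\otimes\bbs^{\sigma(\alpha)}\twoheadrightarrow\bbs^{\sigma((p)\odot\alpha)}$ acting by the same multiplication on the bottom two tensor positions, and Proposition~\ref{skew-concatenation-near-concatenation-prop}(i) supplies a natural inclusion $\bbs^{\sigma((p)\odot\alpha)}\hookrightarrow S^{p+\alpha_1}\otimes\bbs^{\sigma(\hat\alpha)}$. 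Verifying that $(\partial^\alpha)_{p+|\alpha|}$ coincides with this composite (both send $s\otimes t$ to the result of expanding $t$ across the shared column and multiplying the first row-factor by $s$), the short exact sequence of Proposition~\ref{concatenation-near-concatenation-prop} applied with $\alpha'=(p)$, $\beta'=\alpha$ identifies $\ker(\partial^\alpha)_{p+|\alpha|}=\ker m\cong\bbs^{\sigma(p,\alpha)}$, since $(p)\cdot\alpha=(p,\alpha)$.

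For part~(ii), a parallel identification of $(\partial^{(q,\alpha)})_{p+|\alpha|}=(\mu\otimes 1)\circ(1\otimes\Delta)$ --- now with $\Delta\colon\bbs^{\sigma(q,\alpha)}\hookrightarrow S^q\otimes\bbs^{\sigma(\alpha)}$ and $\mu\colon S^{p-q}\otimes S^q\to S^p$ --- recognizes this map as the near-concatenation surjection $m$ of Proposition~\ref{skew-concatenation-near-concatenation-prop}(ii) with $D=\sigma(p-q)$, $D'=\sigma(q,\alpha)$, postcomposed with the inclusion $\bbs^{\sigma(p,\alpha)}\hookrightarrow S^p\otimes\bbs^{\sigma(\alpha)}$. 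Applying Proposition~\ref{concatenation-near-concatenation-prop} with $\alpha'=(p-q)$, $\beta'=(q,\alpha)$ then shows $m$ surjects onto $\bbs^{\sigma((p-q)\odot(q,\alpha))}=\bbs^{\sigma(p,\alpha)}$, so $\im(\partial^{(q,\alpha)})_{p+|\alpha|}=\bbs^{\sigma(p,\alpha)}$, which by part~(i) equals $\ker(\partial^\alpha)_{p+|\alpha|}$.

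The hard part will be the identification step itself: translating the ``multiply and drop a tensor factor'' operation $\partial$ into the abstractly defined near-concatenation $m$ of Proposition~\ref{skew-concatenation-near-concatenation-prop} is essentially bookkeeping, but it requires keeping straight which of the many inclusions $\bbs^{\sigma(\beta)}\hookrightarrow S^{\beta_1}\otimes\bbs^{\sigma(\hat\beta)}$ is being used at each step, and tracking that the target $\bbs^{\sigma(p,\alpha)}$ is sitting inside $S^p\otimes\bbs^{\sigma(\alpha)}$ via the correct embedding. Once that bookkeeping is done, everything follows from the split short exact sequence~\eqref{eq: concatenation SES}.
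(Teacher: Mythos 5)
Your proof is correct, and it takes a genuinely different (and more economical) route than the paper's. Both arguments begin from the observation that the graded piece of $\partial$ is the restriction of the multiply-first-two-tensor-factors map $\mu\otimes 1$ to the Schur subspaces, but they factor it differently. The paper factors $(\partial^\alpha)_{p+|\alpha|}$ as (surjection $\pi$) composed after (injection $\iota$), passing through the \emph{larger} ambient space $S^p\otimes S^{\alpha_1}\otimes\bbs^{\sigma(\hat\alpha)}$; this forces them to compute $\ker=\im(\iota)\cap\ker(\pi)$, for which they invoke the intersection lemma Proposition~\ref{prop:intersectingSchurs}, itself a consequence of the Hamel--Goulden complex $\HHH$ of Theorem~\ref{lem:HGcatComplex}. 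You instead factor $(\partial^\alpha)_{p+|\alpha|}$ as (injection $\Delta$) composed after (surjection $m$), i.e.\ the image factorization through the \emph{smaller} space $\bbs^{\sigma((p)\odot\alpha)}$; since $\Delta$ is injective, $\ker(\partial^\alpha)=\ker m$ drops out of the split short exact sequence~\eqref{eq: concatenation SES} directly, with no intersection computation and no appeal to Proposition~\ref{prop:intersectingSchurs}. The one thing you must verify --- that $(\partial^\alpha)_{p+|\alpha|}$ really does coincide with $\Delta\circ m$, equivalently that $m$ and $\partial^\alpha$ are both the restriction of $\mu\otimes 1^{\otimes(\ell-1)}$ to $S^p\otimes\bbs^{\sigma(\alpha)}\subset S^p\otimes S^{\alpha_1}\otimes\cdots\otimes S^{\alpha_\ell}$, with $m$'s codomain identified via the $\Delta$ of Proposition~\ref{skew-concatenation-near-concatenation-prop}(i) --- is indeed the ``bookkeeping'' you flag, and it is genuine: it is exactly the unwinding of how $m$ is constructed in the proof of Proposition~\ref{skew-concatenation-near-concatenation-prop}(ii), namely as the restriction of the row-merging multiplication $S^{\rows(D\oplus D')}\twoheadrightarrow S^{\rows(D\odot D')}$. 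The phrase ``expanding $t$ across the shared column'' is a slight misnomer (the inclusion $\Delta_\alpha$ separates out the \emph{bottom row}, not a column), but the mathematics is sound. Part~(ii) then becomes transparent because $(\partial^{(q,\alpha)})_{p+|\alpha|}$ has the same $\Delta\circ m$ shape with $D=\sigma(p-q)$, $D'=\sigma(q,\alpha)$, and its image $\Delta(\bbs^{\sigma(p,\alpha)})\subset S^p\otimes\bbs^{\sigma(\alpha)}$ is literally the same subspace as $\ker m$ from part~(i), since both are the copy of $\bbs^{\sigma(p,\alpha)}$ embedded via the $\Delta$ of Proposition~\ref{concatenation-near-concatenation-prop} with compositions $(p)$ and $\alpha$. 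In short: same ingredients as the paper (Propositions~\ref{concatenation-near-concatenation-prop} and~\ref{skew-concatenation-near-concatenation-prop}), but a cleaner factorization that bypasses Proposition~\ref{prop:intersectingSchurs} entirely.
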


\begin{proof}
To prove (i), start by 
factoring $(\partial^\alpha)_{p+|\alpha|}=\pi \circ i$ as the following composite
\[
S^p \otimes_\kk \bbs^{\sigma (\alpha)}
\quad
\overset{\iota}{\hookrightarrow}
\quad
S^p \otimes_\kk S^{\alpha_1} \otimes_\kk \bbs^{\sigma(\hat{\alpha})} 
\quad
\overset{\pi}{\twoheadrightarrow}
\quad
S^{p + \alpha_1} \otimes_\kk \bbs^{\sigma(\hat{\alpha})}
\]
where $\iota$ is injective and $\pi$ is surjective,
using Proposition~\ref{tensoring-with-skews-is-exact-prop}
together with these facts:
\begin{itemize}
\item $\iota=1 \otimes_\kk \Delta_{(\alpha_1),\hat{\alpha}}$ 
where $\Delta_{(\alpha_1),\hat{\alpha}}$ is an instance of the injection in Proposition~\ref{concatenation-near-concatenation-prop},
\item $\pi=m \otimes_\kk 1$ where
$m: S^p \otimes S^{\alpha_1} \rightarrow S^{p+\alpha_1}$ is multiplication in $S=S(V)$.
\end{itemize}
Therefore one has that 
$$
\begin{aligned}
\ker (\partial^\alpha)_{p + |\alpha|} 
&= \im (\iota) \cap \ker (\pi) \\
&=
\iota\left( S^p \otimes_\kk \bbs^{\sigma (\alpha)}  \right) \cap \left( \bbs^{\sigma (p , \alpha_1)}  \otimes_\kk \bbs^{\sigma (\hat{\alpha})}  \right)\\
&\cong \bbs^{\sigma (p,\alpha_1,\hat{\alpha})} \\ &=\bbs^{\sigma (p ,\alpha)} ,
\end{aligned}
$$
where the identification 
$\ker (\pi) = \bbs^{\sigma (p , \alpha_1)}  \otimes_\kk \bbs^{\sigma (\hat{\alpha})} $ uses injectivity
of the map $\Delta_{(p,\alpha_1),\hat{\alpha}}$
in Proposition~\ref{concatenation-near-concatenation-prop}
(along with Proposition~\ref{tensoring-with-skews-is-exact-prop}),
and the isomorphism of the intersection with
$\bbs^{\sigma (p,\alpha_1,\hat{\alpha})} $
applies Proposition \ref{prop:intersectingSchurs}
to the three compositions $(p), (\alpha_1), \hat{\alpha}$.

Assertion (ii) will then follow from assertion (i), 
together with the commutativity of this diagram for $0 < q \leq p$:
\[\begin{tikzcd}
	{S^{p - q}  \otimes_\kk \bbs^{\sigma (q,\alpha)} } && {S^p  \otimes_\kk \bbs^{\sigma (\alpha)} } \\
	{\bbs^{\sigma(p , \alpha)} } && {S^p  \otimes_\kk S^{\alpha_1}  \otimes_\kk \bbs^{\sigma( \hat{\alpha})} }
	\arrow["{(\partial^{(q,\alpha)})_{p+ |\alpha|}}", from=1-1, to=1-3]
	\arrow[two heads, from=1-1, to=2-1]
	\arrow[hook, from=2-1, to=2-3]
	\arrow[hook, from=1-3, to=2-3]
\end{tikzcd}\]
Surjectivity of the left vertical map follows because it is the surjection $m_{(p-q), (q,\alpha)}$
in Proposition~\ref{concatenation-near-concatenation-prop},
noting that the compositions $(p-q)$ and $(q,\alpha)$ have
$(p-q) \odot (q,\alpha)=(p,\alpha)$.
\end{proof}

\subsection{Resolution of the Veronese modules}
\label{resolution-subsection}

In this section, fix $d,r \geq 1$, and the Veronese ring $R=\veralg{d}$ with the $R$-module $M=\vermod{d}{ r}$. We will show that a minimal $R$-free resolution of $M$ is obtained by restricting to a uniform class of ribbons in the ribbon complex $\fancyR_\bullet$. 

Since $M$ is minimally generated as an $R$-module by the $r^{th}$ homogeneous component $S_r=S^r(V)=\schurfunctor^{\sigma(r)}$ of $S$, one can
start an $R$-free resolution of $M$ with the surjection
$$
\partial_0: 
R \otimes_\kk \schurfunctor^{\sigma(r)}=
R \otimes_\kk S_r
\twoheadrightarrow M
$$
given by multiplication within $S$.
Note that to make $\partial_0$ homogeneous, one should shift the degree of the $R$-basis elements for its source into degree $r$, making it
$R \otimes_\kk \schurfunctor^{\sigma(r)}(-r)$.
The following is then a precise version of Theorem~\ref{skew-Schur-functor-theorem} from the Introduction.

\begin{thm}
\label{explicit-resolution-theorem}
The map $\partial_0$ starts a $GL(V)$-equivariant minimal $R$-free resolution of $M$
$$
R \otimes_\kk \schurfunctor^{\sigma(r)}
\overset{\partial_1}{\leftarrow} R \otimes_\kk \schurfunctor^{\sigma(d,r)}
\overset{\partial_2}{\leftarrow} \cdots 
\leftarrow R \otimes_\kk \schurfunctor^{\sigma(d^{i-1},r)}
\overset{\partial_i}{\leftarrow} R \otimes_\kk \schurfunctor^{\sigma(d^i,r)}
\leftarrow \cdots
$$
obtained by  restricting the base ring in the complex of ribbons $(\fancyR_\bullet,d_\varphi)$ from $S$-modules to $R$-modules, and then
taking only the summands indexed by compositions of the form $\alpha=(d^i,r)$ 
for $i=0,1,2,\ldots$.
Here the free $R$-module in homological degree $i$ has its basis elements in
degree $di+r$, so it is
actually 
$R \otimes_\kk \schurfunctor^{\sigma(d^i,r)}(-(di+r))$.
\end{thm}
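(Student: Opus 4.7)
The plan is to exhibit the desired resolution as an $R$-subcomplex of the complex of ribbons $(\fancyR_\bullet,\partial)$, and then deduce exactness and minimality from the machinery of Section~\ref{ribbon-complex-subsection}, most importantly Lemma~\ref{lem:ribbonCxLemma}.

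First I would verify that the proposed objects form a well-defined $R$-subcomplex with $\partial^2=0$. Since $\partial$ strips away the leading entry of the composition labeling a ribbon, the summands $S\otimes_\kk \schurfunctor^{\sigma(d^i,r)}$ for $i=0,1,2,\dots$ form a chain of maps
$$
S\otimes_\kk \schurfunctor^{\sigma(d^i,r)} \xrightarrow{\partial} S\otimes_\kk \schurfunctor^{\sigma(d^{i-1},r)},
$$
and $\partial^2=0$ is inherited from Proposition~\ref{prop:complexofRibbons}(ii). The $R$-submodules $R\otimes_\kk \schurfunctor^{\sigma(d^i,r)}\subseteq S\otimes_\kk \schurfunctor^{\sigma(d^i,r)}$ are preserved by $\partial$ because, for $i\geq 1$, the map acts by $S$-linearly absorbing the first tensor factor of a ribbon element, which lies in $S^d\subseteq R$; multiplying an element of $R$ by this factor stays in $R$. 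All maps are $GL(V)$-equivariant by construction. The augmentation $\partial_0\colon R\otimes_\kk S^r\to M$ is just multiplication in $S$, and is surjective onto $M$ because $M_{r+dk}=S_{r+dk}=S_{dk}\cdot S_r = R_{dk}\cdot S_r$ for every $k\geq 0$.

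The main content is exactness at each homological position. Working in graded degree $r+d(i+k)$ for $k\geq 0$, position $i$ specializes to $S^{dk}\otimes_\kk \schurfunctor^{\sigma(d^i,r)}$, and the differential there is the map $(\partial^{(d^i,r)})_{dk+di+r}$ from Lemma~\ref{lem:ribbonCxLemma}. For $k\geq 1$, part (i) of that lemma identifies the kernel with $\schurfunctor^{\sigma(dk,d^i,r)}$, and part (ii) applied with $q=d\leq dk$ identifies this kernel with the image of $(\partial^{(d^{i+1},r)})_{dk+di+r}$, whose source $S^{d(k-1)}\otimes_\kk \schurfunctor^{\sigma(d^{i+1},r)}$ is precisely position $i+1$ in the same total degree. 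For the boundary case $k=0$ and $i\geq 1$, the differential is the map $\Delta$ of Proposition~\ref{concatenation-near-concatenation-prop} applied to compositions $(d)$ and $(d^{i-1},r)$, so it is injective; and position $i+1$ in degree $di+r$ is zero for degree reasons, so the sequence is trivially exact there. The analogous computation with $\alpha=(r)$ and $i=0$ identifies $\ker(\partial_0)$ with $\im(\partial_1)$ in each degree $r+dk$ with $k\geq 1$, finishing exactness.

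Minimality then amounts to the observation that an $R$-basis element of $R\otimes_\kk \schurfunctor^{\sigma(d^i,r)}$ sits in degree $di+r$, and $\partial$ sends it to an element of the same total degree in $R\otimes_\kk \schurfunctor^{\sigma(d^{i-1},r)}$; this forces its $R$-coefficient to lie in $R_d\subseteq R_+$. The main obstacle I anticipate is purely bookkeeping: one must carefully align the internal grading conventions of $\fancyR_\bullet$ used in Lemma~\ref{lem:ribbonCxLemma} with the homological and internal gradings of the resolution in Theorem~\ref{explicit-resolution-theorem}, and distinguish the interior case $k\geq 1$ from the corner $k=0$. Once that matching is made, Lemma~\ref{lem:ribbonCxLemma}(i)--(ii) furnishes the $\ker=\im$ identification with no further computation required.
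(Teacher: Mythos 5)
Your proposal is correct and follows essentially the same route as the paper's proof: restrict to homogeneous components, use Lemma~\ref{lem:ribbonCxLemma}(i)--(ii) with $\alpha=(d^i,r)$, $p=dm$, $q=d$ for the interior degrees, and handle the bottom degree by injectivity of $\Delta$. You are somewhat more explicit than the paper about surjectivity of $\partial_0$, preservation of the $R$-lattice under $\partial$, and minimality, but the core exactness argument is identical.
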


\begin{proof}
Since $\partial^2=0$, it remains to show $\ker(\partial_i) = \im(\partial_{i+1})$.
Using the $\NN$-grading on $S$ inherited by $R$ and $M$, 
one need only check this equality on each homogeneous component, that is, 
$\ker(\partial_i)_{d(i+m)+r} = \im(\partial_{i+1})_{d(i+m)+r}$
for all $m \geq 0$. Notice that if $m = 0$, then $\ker(\partial_i)_{d(i+m)+r} = 0$, since $(\partial_i)_{di+r}$ is just the natural inclusion
$$\bbs^{\sigma (d^i , r)}  \to S^d  \otimes_\kk \bbs^{\sigma (d^{i-1} , r)} .$$
When $m > 0$, applying Lemma \ref{lem:ribbonCxLemma} with $\alpha=(d^i,r)$, $p=dm$ and $q=d$ gives the equality:
$$
\begin{array}{ccccc}
\ker (\partial^{(d^i,r)})_{d(i+m)+r} 
&=&
\im (\partial^{(d,d^i,r)})_{d(i+m)+r}&=&\im (\partial^{(d^{i+1},r)})_{d(i+m)+r}\\
\Vert & & & &\Vert \\
\ker(\partial_i)_{d(i+m)+r} & & & & \im(\partial_{i+1})_{d(i+m)+r}.
\qedhere
\end{array}
$$
\end{proof}

\begin{example}
Take $d=3$ and $r=2$, 
so that $R=\veralg{3}$ and $M=\vermod{3}{2}$. Then the
$R$-free resolution of $M$ has the form
$$
R \otimes_\kk \schurfunctor^{
\ytableausetup{boxsize=0.3em}
\begin{ytableau}
\, & 
\end{ytableau}}
\leftarrow R \otimes_\kk \schurfunctor^{
\ytableausetup{boxsize=0.3em}
\begin{ytableau} 
 \none &\none & & \\ 
  & &  
\end{ytableau}
}
\leftarrow R \otimes_\kk \schurfunctor^{
\ytableausetup{boxsize=0.3em}
\begin{ytableau} 
\none& \none& \none &\none & &\\ 
 \none& \none&  & & \\  
   & &     
\end{ytableau}
}
\leftarrow \cdots.
$$
To demonstrate how the differential behaves, we choose an element from $R \otimes_\kk \schurfunctor^{
\ytableausetup{boxsize=0.3em}
\begin{ytableau} 
\none& \none& \none &\none & &\\ 
 \none& \none&  & & \\  
   & &     
\end{ytableau}
}$ and show that the differential composes to zero:
\begin{align*}
\ytableausetup{boxsize=0.9em}
    1 \otimes \left[\yshort{\none\none\none\none 12, \none\none 345, 678} \right] &\overset{\partial_2}{\longmapsto} x_6 x_7 x_8 \otimes \left[\yshort{\none\none 12, 345}\right] - x_3 x_6 x_7 \otimes \left[\yshort{\none\none 12, 845} \right] \\
    &\overset{\partial_1}{\longmapsto} x_3 x_4 x_5 x_6 x_7 x_8 \otimes \left[\,\, \yshort{12}\,\,\right] - x_1 x_3 x_4 x_6 x_7 x_8 \otimes \left[\,\, \yshort{52}\,\,\right] \\ &- \left(\,\, x_3 x_4 x_5 x_6 x_7 x_8 \otimes \left[\,\,\yshort{12}\,\,\right] - x_1 x_3 x_4 x_6 x_7 x_8 \otimes \left[\,\,\yshort{52}\,\,\right]\,\, \right) \\
     &=0.
\end{align*}
\end{example}

With Theorem~\ref{skew-Schur-functor-theorem} proven,
the next few subsections remark on its
consequences.

\subsection{Tor and Koszulity}
\label{tor-koszulity-subsection}

As mentioned in the Introduction,
Theorem~\ref{skew-Schur-functor-theorem} has a corollary.

\vskip.1in
\noindent
{\bf Corollary}~\ref{main-tor-corollary}
{\it
In the setting of Theorem~\ref{skew-Schur-functor-theorem}, 
$\Tor_i^R (M,\kk)_j$ vanishes for $j\neq di+r$, and
$$
\Tor_i^R (M,\kk)_{di+r}
\cong \schurfunctor^{\sigma(d^i,r)},
$$
as a polynomial $GL(V)$-representation.
}
\vskip.1in

\noindent
In particular, the resolution of  Theorem~\ref{skew-Schur-functor-theorem}
is pure, in the sense that its $i^{th}$ resolvent is a free $R$-module whose basis elements lie in degree $di+r$.

This corollary has consequences for $R$ and $M$ as Koszul algebras and modules. We recall here the
definitions of these concepts, and a few of their
basic properties.  Let $A$ be an associative graded connected $\kk$-algebra, so 
$A=\bigoplus_{m=0}^\infty A_m$ with $A_0=\kk$ and $A_i A_j \subseteq A_{i+j}$.  Say that $A$ is {\it standard graded} if it is generated
as a $\kk$-algebra by $A_1$.  

\begin{definition} \rm
In the above setting, one calls $A$ a {\it Koszul algebra} if the quotient $\kk=A/A_+$ has a {\it linear free $A$-resolution}, that is, one of the form
$$
0 \leftarrow \kk 
\leftarrow A
\leftarrow A(-1)^{\beta_1}
\leftarrow  A(-2)^{\beta_2}
\leftarrow \cdots.
$$
Equivalently, $\Tor^A_i(\kk,\kk)_j=0$ unless $j=i$.

Given a graded $A$-module $M$, one says that $M$ is a {\it Koszul module} over $A$ if it is generated in degree $0$ and has a linear free $A$-resolution, that is, of the form
$$
0 \leftarrow M 
\leftarrow A^{\beta_0(M)}
\leftarrow A(-1)^{\beta_1(M)}
\leftarrow  A(-2)^{\beta_2(M)}
\leftarrow \cdots
$$
Equivalently, $\Tor^A_i(M,\kk)_j=0$ unless $j=i$.
\end{definition}

The following basic facts appear, for example, in Positselski and Polishchuk\footnote{Actually, in \cite{polishchuk2005quadratic} the authors assume $\kk$ is a field, although their methods do not require it.  For Koszulity definitions and results over more general rings $\kk$, see Beilinson, Ginzburg and Soergel \cite{beilinson1996koszul}.} \cite{polishchuk2005quadratic}.

\begin{prop}
\label{PolischukPositselski-prop} Let $A$ be a Koszul algebra.
\begin{enumerate}[(a)]
    \item \cite[Chap. 3, Prop. 2.2]{polishchuk2005quadratic} Veronese subalgebras of $A$ are also Koszul.
    \item \cite[Chap. 2, Prop. 1.1]{polishchuk2005quadratic} Let $M$ be a Koszul module over $A$.  
Then for every $q \geq 0$,
the $q^{th}$ truncated module $M^{[q]}$ defined by 
$$
M^{[q]}_n = \begin{cases}
M_{q+n} & \text{ for }n \geq 0,\\
0 &\text{ for }n <0,
\end{cases}
$$
is also a Koszul module over $A$.
\end{enumerate}

\end{prop}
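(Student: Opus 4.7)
My plan is to prove each part separately, exploiting the characterization of Koszulity via vanishing of off-diagonal Tor groups: over a Koszul algebra $A$, a finitely generated graded module $N$ generated in degree $0$ is Koszul iff its Castelnuovo-Mumford regularity $\mathrm{reg}_A(N) := \sup\{j - i : \Tor_i^A(N,\kk)_j \neq 0\}$ equals $0$.

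For part (b), I would proceed by induction on $q$, reducing to the base case $q = 1$. Consider the short exact sequence of graded $A$-modules
$$0 \to M_{\geq 1} \to M \to M_0 \to 0,$$
where $M_0 \cong \kk^{\dim_\kk M_0}$ is a trivially Koszul $A$-module via $A \twoheadrightarrow \kk$. The long exact sequence of $\Tor_*^A(-,\kk)$ combined with the Koszulity of $M$ (giving $\Tor_i^A(M,\kk)$ concentrated in internal degree $i$) and of $M_0$ (giving $\Tor_i^A(M_0,\kk)$ concentrated in degree $i$) confines $\Tor_i^A(M_{\geq 1}, \kk)_j$ to at most $j \in \{i, i+1\}$. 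A further analysis of the connecting homomorphism, exploiting minimality of the resolution of $M$, rules out degree $i$ and leaves only degree $i+1$. Shifting $M_{\geq 1}$ down by $1$ to obtain $M^{[1]}$ then yields $\Tor_i^A(M^{[1]}, \kk)$ concentrated in degree $i$, which is Koszulity of $M^{[1]}$.

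For part (a), my plan is to construct a minimal linear $A^{(d)}$-resolution of $\kk$ starting from the minimal linear $A$-resolution $F_\bullet \to \kk$ with $F_i = A(-i)^{\beta_i}$. Upon restriction of scalars, each $F_i$ ceases to be $A^{(d)}$-free, but decomposes as a direct sum of Veronese modules $A^{(d,r)} := \bigoplus_n A_{nd+r}$ with appropriate internal degree shifts. One then inductively resolves each $A^{(d,r)}$ by a minimal linear $A^{(d)}$-resolution; existence of such resolutions in the special case $A = S = \kk[x_1,\ldots,x_n]$ is exactly the content of Theorem~\ref{skew-Schur-functor-theorem}, and an analogous argument (passing through part (b) above to obtain Koszulity of $A^{(d,r)}$ first as an $A$-module) handles the general case. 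Assembling these into a double complex and taking the total complex yields a resolution of $\kk$ over $A^{(d)}$; a grading count, using that generators of $A^{(d)}$ live in internal $A$-degree $d$, confirms linearity in the Veronese grading.

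The main obstacle for part (a) is establishing the existence of linear $A^{(d)}$-resolutions for the Veronese modules $A^{(d,r)}$ in full generality, beyond the polynomial case already handled by Theorem~\ref{skew-Schur-functor-theorem}; it is essentially here that the full force of Koszulity of $A$ (and not merely the linearity of a single resolution) gets used. For part (b), the principal technical hurdle is tracking the internal degrees through the connecting homomorphism carefully enough to pin them to a single value rather than a two-element range.
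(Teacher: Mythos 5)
The paper does not prove this proposition --- it simply cites Polishchuk--Positselski --- so there is no in-paper argument to compare against; I will evaluate your sketch on its own merits.

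Your sketch of part (b) is essentially correct, but the ``further analysis of the connecting homomorphism'' is a red herring. Once the long exact sequence confines $\Tor_i^A(M_{\geq 1},\kk)$ to internal degrees in $\{i,i+1\}$, the exclusion of degree $i$ is automatic and has nothing to do with the connecting map: $M_{\geq 1}$ is concentrated in degrees $\geq 1$, so $\Tor_0^A(M_{\geq 1},\kk)$ is concentrated in degrees $\geq 1$, and minimality of a graded free resolution forces $\Tor_i^A(M_{\geq 1},\kk)_j = 0$ for $j \leq i$. Combining the two bounds gives exactly $j = i+1$, and shifting by $1$ yields Koszulity of $M^{[1]}$. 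The induction $M^{[q+1]} = (M^{[q]})^{[1]}$ then finishes (b). This is the standard argument and your plan, minus the unnecessary appeal to connecting maps, realizes it.

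Part (a) is where the proposal has a genuine gap, and it is more than the ``main obstacle'' you flag --- the proposed route is essentially circular. Your plan restricts the linear $A$-resolution of $\kk$ to $A^{(d)}$, decomposes each free $A$-module into pieces $A^{(d,r)} = \bigoplus_n A_{nd+r}$, and then wishes to resolve each such piece linearly over $A^{(d)}$. But establishing that each $A^{(d,r)}$ admits a linear $A^{(d)}$-resolution is (together with linearity of the resolution of $\kk$ itself) precisely what part (a) asserts and more; you cannot use it as an input. Moreover, the proposed shortcut --- ``passing through part (b) above to obtain Koszulity of $A^{(d,r)}$ first as an $A$-module'' --- is ill-posed: $A^{(d,r)}$ is not an $A$-module at all (multiplication by $A_1$ leaves the congruence class $r \bmod d$), so part (b), which concerns truncations of $A$-modules, does not apply to it. Koszulity of Veronese subalgebras (Backelin's theorem) genuinely requires a different mechanism --- e.g.\ the lattice-distributivity criterion for quadratic algebras, or an argument on the $\Ext$-algebra side as in Polishchuk--Positselski Chapter~3 --- and the double-complex assembly in your plan does not supply it. As written, part (a) assumes what it is trying to prove.
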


Taking $r=d$, one sees that  Corollary~\ref{main-tor-corollary} gives an
alternate proof that $R=\veralg{d}$ is a Koszul algebra,
after rescaling the grading
so that its algebra generators lie in degree $1$; this was
originally proven by Barcanescu and Manolache \cite{BarcanescuManolache}. 

On the other hand,  taking $r$ arbitrary, they also prove that each module $M=\vermod{d}{ r}$ is a Koszul $R$-module, after shifting its grading
so that the $R$-generators of $M$ lie in degree $0$.
This Koszulity of $\vermod{d}{ r}$ as an $R$-module was proven for $0 \leq r \leq d-1$ by Aramova, Barcanescu and Herzog \cite{AramovaBarcanescuHerzog}.  As an alternative to
using Corollary~\ref{main-tor-corollary}, one could deduce Koszulity of $\vermod{d}{ r}$
for arbitrary $r \geq 1$ from the known
$0 \leq r \leq d-1$  case as follows.
Write $r=q \cdot d + \hat{r}$ where $0 \leq\hat{r} \leq d-1$, and then apply 
Proposition~\ref{PolischukPositselski-prop}
with $A=R=\veralg{d}$, and $M=\vermod{d}{\hat{r}}$.
This shows that the truncated module $M^{[q]}=\vermod{d}{ r}$ is Koszul.

\subsection{A symmetric function identity}
\label{symmetric-function-identity-section}

Exactness of the resolution in Theorem~\ref{skew-Schur-functor-theorem} implies
an identity of $GL(V)$-characters
$$
\ch(R) \cdot \sum_{i=0}^\infty (-1)^i \ch(\Tor_i^R(M,\kk)) 
= \ch(M).
$$
Since as symmetric functions, one has
$$
\begin{aligned}
\ch(R) &=1+h_d +h_{2d} + h_{3d}+ \cdots,\\
\ch(M) &=h_r + h_{d+r} +h_{2d+r}+ \cdots,\\
\ch(\Tor_i^R(M,\kk)) & = s_{\sigma(d^i,r)},
\end{aligned}
$$
this becomes the following symmetric function identity:
\begin{equation}
\label{symmetric-function-identity}
h_r + h_{d+r} +h_{2d+r}+ \cdots
=\left( 1+h_d +h_{2d} + h_{3d}+ \cdots \right)
\cdot 
\sum_{i=0}^\infty (-1)^i s_{\sigma(d^i,r)}.
\end{equation}
In fact, if one gives an independent proof of the symmetric function identity \eqref{symmetric-function-identity}, this leads in the case where $\kk$ is a field of characteristic zero, 
to an alternate proof of Corollary~\ref{main-tor-corollary}:
\begin{itemize}
    \item first appeal to the Koszulity result for $M$ proven as in Section~\ref{tor-koszulity-subsection}, then
    \item use this to deduce that \eqref{symmetric-function-identity}
determines each of the $GL(V)$-characters $\ch(\Tor_i^R(M,\kk))$, 
\item which then determine each $\Tor_i^R(M,\kk)$ uniquely as a $GL(V)$-representation. 
\end{itemize}
We therefore explain here briefly how \eqref{symmetric-function-identity} connects to
known symmetric function identities.
For each $m=0,1,2,\ldots$, extracting the homogeneous component of degree $md+r$ shows that it is equivalent
to the identity
$
h_{md+r} 
= \sum_{i=0}^m (-1)^i h_{d(m-i)} \cdot s_{\sigma(d^i,r)}.
$
This can be rewritten, isolating the last term $\sigma(d^m,r)$ in the sum, as
\begin{equation}
\label{desired-symm-function-identity}
\begin{aligned}
s_{\sigma(d^m,r)}
&=+h_{d} \cdot s_{\sigma(d^{m-1},r)} 
-h_{2d} \cdot s_{\sigma(d^{m-2},r)}
+h_{3d} \cdot s_{\sigma(d^{m-3},r)}-\cdots\\
&\qquad + (-1)^{m-2} h_{(m-1)d} \cdot s_{\sigma(d^1,r)} 
+ (-1)^{m-1} h_{md} \cdot s_{\sigma(r)}\\
&\qquad\qquad + (-1)^m h_{md+r} 
\end{aligned}
\end{equation}
We claim that this identity \eqref{desired-symm-function-identity} is a consequence of the {\it Jacobi-Trudi formula} \cite[(5.4)]{Macdonald} \cite[\S 7.16]{Stanley-EC2} expressing $s_{\sigma(d^m,r)}$ as a determinant in $h_n$'s.  For a general skew shape $\lambda/\mu$ one has
$
s_{\lambda/\mu}
=\det[ 
h_{\lambda_i - \mu_j - i + j}
]_{i,j=1,2,\ldots,\ell(\lambda)}.
$
For the special case of the ribbon shape $\sigma(d^m,r)$, this takes the following form:
$$
s_{\sigma(d^m,r)}
=\det
\left[ 
\begin{matrix}
h_r & h_{d+r} & h_{2d+r}  & h_{3d+r}& \cdots &  h_{md+r} \\
1   & h_d & h_{2d} & h_{3d} & \cdots &h_{md} \\
0   & 1   & h_d & h_{2d} & \cdots &h_{(m-1)d} \\
0   & 0   & 1   & h_d &\cdots& h_{(m-2)d}\\
\vdots   &\vdots  & &  &\ddots &\vdots \\
0  &0 & \cdots   & 0 & 1   & h_d 
\end{matrix}
\right]
$$
The identity
\eqref{desired-symm-function-identity}
is simply the above determinant expanded along its last column.


\subsection{A curious symmetry}
\label{curious-omega-symmetry}
When $r=1$ and $d=2$, then $R=\veralg{2}$ and $M=\vermod{2}{1}$ are the polynomials of even
degree and odd degree in $S=\kk[x_1,\ldots,x_n]$, respectively.  In this case, Theorem~\ref{skew-Schur-functor-theorem}
shows that $M$ has its minimal $R$-free resolution of the form
$$
0 \leftarrow M 
\leftarrow
R \otimes \schurfunctor^{
\ytableausetup{boxsize=0.3em}
\begin{ytableau}
\, 
\end{ytableau}}
\leftarrow R \otimes \schurfunctor^{
\ytableausetup{boxsize=0.3em}
\begin{ytableau} 
 \, & \,\\ 
\, \\ 
\end{ytableau}
}
\leftarrow R \otimes \schurfunctor^{
\ytableausetup{boxsize=0.3em}
\begin{ytableau} 
  \none&\, &\,\\
  & \\ 
  \\
\end{ytableau}
}
\leftarrow R \otimes \schurfunctor^{
\ytableausetup{boxsize=0.3em}
\begin{ytableau} 
  \none&\none&    &  \\
  \none&     &\\ 
  &   \\
  \\
\end{ytableau}
}
\leftarrow \cdots
$$
involving only ribbon skew Schur functors indexed by skew shapes that
are {\it invariant under transposition}, that is,
their associated skew Schur functions are
stable under the {\it fundamental
involution} $\Lambda \overset{\omega}{\longrightarrow }\Lambda$ swapping $h_n \leftrightarrow e_n$ and $s_\lambda \leftrightarrow s_{\lambda^t}$; see Stanley \cite[\S 7.6, 7.15]{Stanley-EC2}.  In this case, one could rewrite \eqref{symmetric-function-identity}
as follows:
$$
\sum_{i=0}^\infty (-1)^i \ch( \Tor_i^{\veralg{2}} ( \vermod{2}{1},\kk) ) 
=\frac{h_1 + h_{3} +h_{5}+ \cdots}{1+h_2 +h_{4} + h_{6}+ \cdots}
=s_{\ytableausetup{boxsize=0.3em}
\begin{ytableau} 
\, \\ 
\end{ytableau}} 
- s_{\ytableausetup{boxsize=0.3em}
\begin{ytableau} 
 \, & \,   \\  
  \,    \\ 
\end{ytableau}} 
+ s_{\ytableausetup{boxsize=0.3em}
\begin{ytableau} 
 \none &  &  \\  
 &   \\  
   \\ 
\end{ytableau}}
- s_{\ytableausetup{boxsize=0.3em}
\begin{ytableau} 
\none& \none &  &  \\  
\none&  &   \\  
 &   \\
 \\
\end{ytableau}}
+ \cdots $$
There is an {\it a priori} reason involving symmetric functions for why this transpose-invariance occurs, as follows.  Recall that
$$
\begin{aligned}
H(t)&:=1+h_1 t+h_2t^2+\cdots\\
E(t)&:=1+e_1 t+e_2t^2+\cdots
\end{aligned}
$$
satisfy $H(t)E(-t)=1$.  Also $\omega$ swaps $H(t) \leftrightarrow E(t)$
since it swaps $h_n \leftrightarrow e_n$.
Now note 
$$
\frac{h_1 + h_{3} +h_{5}+ \cdots}{1+h_2 +h_{4} + h_{6}+ \cdots}
 = \frac{ \frac{1}{2}(H(1) - H(-1)) }
         {\frac{1}{2}(H(1) + H(-1)) }
  = \frac{ 1 - H(-1)/H(1) }                  { 1 + H(-1)/H(1) } 
 = \frac{ 1 - H(-1)\cdot E(-1) }                  { 1 + H(-1)\cdot E(-1) } 
$$
which is stable under $\omega$.

\begin{question}
Is there a more conceptual explanation for this $\omega$-stability of
$\Tor^{\veralg{2}}(\vermod{2}{1},\kk)$?
\end{question}

\subsection{Poset homology}
\label{poset-homology-section}
The rings $S=\kk[x_1,\ldots,x_n], R=\veralg{d}$ and $R$-modules $M=\vermod{d}{ r}$ are examples of a known set-up involving affine semigroups, where one can re-interpret $\Tor_i^R(M,\kk)$ in terms of homology of certain partially ordered sets ({\it posets}).  We recall this set-up here, which in general has three players:
\begin{itemize}
    \item An ambient affine semigroup ring $S$ over the scalar ring $\kk$,
    with $\kk$-basis the semigroup elements $\{ \sigma \}$ (this applies to
    $S=\kk[x_1,\ldots,x_n]$ whose semigroup is $\NN^n$).
    \item A $\kk$-subalgebra $R$ of $S$ which is the affine semigroup ring for a subsemigroup with $\kk$-basis elements the semigroup elements $\{ \rho \}$ (this applies to $R=\veralg{d}$).
    \item An $R$-submodule $M$ of $S$ coming from a semigroup $R$-submodule, with $\kk$-basis elements the semigroup elements $\{ \mu \}$ (this applies to $M=\vermod{d}{ r}$).
\end{itemize}
In this context, define a partial order $\leq_R$ on $M$ by $\mu_1 <_R \mu_2$ if $\mu_1 \rho = \mu_2$ for some $\rho$ in $R$.
Let $M_{< \mu}$ be the subposet consisting of the elements strictly below $\mu$ in this order.  Also recall that the {\it order complex}
$\Delta P$ of a poset $P$ is the abstract simplicial complex having
$P$ as vertex set and a simplex for each {\it chain} (=totally ordered subset) of $P$.

One then has the following result, whose idea goes back at least as far as Laudal and Sletsj{\o}e \cite{LaudalSletsjoe}; see also Herzog, Reiner and Welker \cite{HerzogRWelker}, Peeva, Reiner and Sturmfels \cite{PeevaRSturmfels}.  As in those sources, the proof (whose details we omit) comes from computing $\Tor^R_i(M,\kk)_\mu$ starting with the augmented bar resolution of $\kk$, next applying $M \otimes_R(-)$, then extracting the $\mu$-graded component, and finally taking homology.

\begin{prop}
\label{Tor-as-semigroup-poset-homology}
In the above setting, for each semigroup element $\mu$ in $M$ one has
$$
\Tor^R_i(M,\kk)_\mu \cong 
\tilde{H}_{i-1}(\Delta(M_{<\mu}),\kk),
$$
equivariant for any group $G$ acting on $S$ that stabilizes $R, M$ and the multidegree $\mu$.
\end{prop}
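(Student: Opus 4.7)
The plan is to follow the recipe suggested just before the proposition: compute $\Tor^R_\bullet(M,\kk)$ using the (reduced) bar resolution of $\kk$, apply $M \otimes_R(-)$, extract the semigroup-multidegree-$\mu$ component, and recognize the resulting complex as (a shift of) the augmented simplicial chain complex of $\Delta(M_{<\mu})$.

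First I would take the reduced (normalized) bar resolution $B_\bullet \to \kk$ with $B_i = R \otimes_\kk \overline{R}^{\otimes i}$, where $\overline{R}=R_+$ is spanned by the nonidentity semigroup elements $\{\rho\}$. Since the whole picture is multigraded by the ambient semigroup, applying $M\otimes_R -$ gives a complex $C_\bullet = M \otimes_\kk \overline{R}^{\otimes \bullet}$ whose multidegree-$\mu$ piece $(C_i)_\mu$ has $\kk$-basis indexed by tuples
\[
(\mu_0 \mid \rho_1,\rho_2,\ldots,\rho_i), \qquad \mu_0 \in M,\ \rho_1,\ldots,\rho_i \in R_+, \quad \mu_0 \rho_1\cdots\rho_i = \mu.
\]
Then I would set up the bijection between such tuples and chains in $M_{<\mu}$ by sending the tuple above to the chain
\[
\mu_0 <_R \mu_0\rho_1 <_R \mu_0\rho_1\rho_2 <_R \cdots <_R \mu_0\rho_1\cdots\rho_{i-1},
\]
with $i$ elements (hence an $(i-1)$-simplex of $\Delta(M_{<\mu})$); the inverse sends a chain $\sigma_0<\sigma_1<\cdots<\sigma_{i-1}$ in $M_{<\mu}$ to the tuple whose first entry is $\sigma_0$ and whose successive $\rho_j$'s are the unique semigroup elements in $R$ with $\sigma_{j-1}\rho_j=\sigma_j$ (and $\sigma_{i-1}\rho_i=\mu$). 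This uses only the cancellative/integral-domain nature of the ambient affine semigroup ring $S$.

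Next I would verify that the bar differential matches the simplicial boundary. On the tuple $(\mu_0\mid \rho_1,\ldots,\rho_i)$ the bar differential produces the term $(\mu_0\rho_1\mid \rho_2,\ldots,\rho_i)$ (deletion of the vertex $\sigma_0$), the terms $(\mu_0\mid \rho_1,\ldots,\rho_j\rho_{j+1},\ldots,\rho_i)$ for $1\le j\le i-1$ (deletion of $\sigma_j$), and the last term $(\mu_0\mid \rho_1,\ldots,\rho_{i-1})$, which under $M\otimes_R(-)$ survives and corresponds to deleting $\sigma_{i-1}$; the alternating signs match those of the simplicial boundary. Therefore $(C_\bullet)_\mu$ is, up to a degree shift by $1$, the augmented simplicial chain complex of $\Delta(M_{<\mu})$ over $\kk$, and taking homology yields
\[
\Tor^R_i(M,\kk)_\mu \;=\; H_i((C_\bullet)_\mu) \;\cong\; \tilde{H}_{i-1}(\Delta(M_{<\mu}),\kk).
\]
The $i=0$ edge case is the sanity check: $(C_0)_\mu$ is $1$-dimensional (basis $\mu$) and $\Delta(M_{<\mu})$ is empty precisely when $\mu$ is a minimal generator of $M$, where $\tilde{H}_{-1}=\kk$.

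Finally, for the $G$-equivariance, every step is natural in the semigroup data: $G$ permutes the semigroup bases of $S, R, M$, hence acts compatibly on the tuples and on chains in $M_{<\mu}$, and since $G$ fixes $\mu$ the identification of the multidegree-$\mu$ component with simplicial chains is $G$-equivariant. The main obstacle (such as it is) is really only bookkeeping: matching the bar signs with the simplicial-boundary signs and verifying that the bijection tuple-$\leftrightarrow$-chain is well-defined and inverse, which requires nothing beyond that $R$-multiplication on semigroup elements is cancellative in the ambient affine semigroup.
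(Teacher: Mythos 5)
Your proposal is correct and follows exactly the route the paper indicates (bar resolution of $\kk$, apply $M\otimes_R(-)$, extract the multidegree-$\mu$ component, identify the result with the augmented chain complex of $\Delta(M_{<\mu})$); the paper itself explicitly omits the details and cites Laudal--Sletsj{\o}e, Herzog--Reiner--Welker, and Peeva--Reiner--Sturmfels for the same argument. One small bookkeeping slip in the middle paragraph: the term $(\mu_0\mid\rho_1,\ldots,\rho_{i-1})$ you describe as ``surviving'' is the $\epsilon$-term of the bar differential, which in fact \emph{vanishes} because $\rho_i\in R_+$ has $\epsilon(\rho_i)=0$; and the deletion of $\sigma_{i-1}$ is already accounted for by the $j=i-1$ merging term $(\mu_0\mid\rho_1,\ldots,\rho_{i-2},\rho_{i-1}\rho_i)$, so you are double-counting that face. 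Once that is cleaned up, the $i$ nonvanishing bar terms (the ``multiply into $\mu_0$'' term plus the $i-1$ merges) match bijectively, with the right signs, the $i$ facets obtained by deleting $\sigma_0,\ldots,\sigma_{i-1}$, and the rest of the argument, including the cancellativity point and the equivariance, is fine.
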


This immediately gives the following corollary of Theorem~\ref{skew-Schur-functor-theorem}.

\begin{cor}
\label{poset-homology-corollary}
For any ring $\kk$, fix $d,r \geq 1$, and let $R=\veralg{d},M=\vermod{d}{ r}$
as usual. Then for any multidegree
$\mathbf{a}=(a_1,\ldots,a_n)$ in $\NN^n$
with $|\mathbf{a}|:=\sum_j a_j \geq r$
and $|\mathbf{a}| \equiv r \bmod{d}$, one has
$\tilde{H}_{i-1}(\Delta(M_{<\mu}),\kk)=0$
unless $|\mathbf{a}|=di+r$, in which case
$$
\tilde{H}_{i-1}(\Delta(M_{<\mu}),\kk)
\cong 
( 
\schurfunctor^{\sigma(d^i,r)}
)_{\xx^{\mathbf{a}}}.
$$
Here $( 
\schurfunctor^{D}
)_{\xx^{\mathbf{a}}}$
denotes the $\xx^{\mathbf{a}}$-weight space in the polynomial $GL(V)$-representation
$\schurfunctor^{D}$, and
the isomorphism is equivariant with respect to the subgroup $\symm_{\mathbf{a}}$ within $\symm_n$ stabilizing $\mathbf{a}$.
\end{cor}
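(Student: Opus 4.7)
The proof assembles two ingredients already available in the excerpt: the semigroup-theoretic description of multigraded $\Tor$ in Proposition~\ref{Tor-as-semigroup-poset-homology}, and the explicit equivariant form of $\Tor^R_i(M,\kk)$ from Corollary~\ref{main-tor-corollary} (which is itself a consequence of Theorem~\ref{skew-Schur-functor-theorem}).

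The first step is to verify that our setup $S=\kk[x_1,\ldots,x_n]$, $R=\veralg{d}$, $M=\vermod{d}{r}$ fits the affine semigroup framework preceding Proposition~\ref{Tor-as-semigroup-poset-homology}: the ambient semigroup is $\NN^n$ (with monomial basis $\{\xx^{\mathbf{b}}\}$), $R$ is the subsemigroup ring on $\{\mathbf{b}\in\NN^n : |\mathbf{b}|\equiv 0 \bmod d\}$, and $M$ is the $R$-submodule on $\{\mathbf{b}\in\NN^n : |\mathbf{b}|\geq r \text{ and } |\mathbf{b}|\equiv r \bmod d\}$. For $\mu=\xx^{\mathbf{a}}$ in $M$, Proposition~\ref{Tor-as-semigroup-poset-homology} then gives
\[
\tilde{H}_{i-1}(\Delta(M_{<\mu}),\kk) \;\cong\; \Tor^R_i(M,\kk)_\mu,
\]
where the right-hand side is the $\xx^{\mathbf{a}}$-weight component of $\Tor^R_i(M,\kk)$ as a polynomial $GL(V)$-representation.

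Next I would invoke Corollary~\ref{main-tor-corollary}, which says that $\Tor^R_i(M,\kk)_j$ vanishes unless $j=di+r$, and otherwise is isomorphic to $\schurfunctor^{\sigma(d^i,r)}(V)$ as a polynomial $GL(V)$-representation. Since a homogeneous polynomial $GL(V)$-representation decomposes into weight spaces for the diagonal torus, extracting the $\xx^{\mathbf{a}}$-weight piece on both sides yields the vanishing statement when $|\mathbf{a}|\neq di+r$, and the desired isomorphism
\[
\tilde{H}_{i-1}(\Delta(M_{<\mu}),\kk) \;\cong\; \bigl(\schurfunctor^{\sigma(d^i,r)}\bigr)_{\xx^{\mathbf{a}}}
\]
when $|\mathbf{a}|=di+r$.

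Finally, for the $\symm_{\mathbf{a}}$-equivariance: the symmetric group $\symm_n$ acts on $S$ by permuting $x_1,\ldots,x_n$, stabilizes both $R$ and $M$, and the subgroup $\symm_{\mathbf{a}}\subseteq\symm_n$ is precisely the stabilizer of the multidegree $\mathbf{a}$. Both the poset $M_{<\mu}$ and the weight space $(\schurfunctor^{\sigma(d^i,r)})_{\xx^{\mathbf{a}}}$ are naturally $\symm_{\mathbf{a}}$-stable, and the isomorphisms above come from (i) the equivariant form of Proposition~\ref{Tor-as-semigroup-poset-homology}, and (ii) restricting the $GL(V)$-equivariant identification of Corollary~\ref{main-tor-corollary} to the subgroup $\symm_{\mathbf{a}}\subseteq \symm_n \subseteq GL(V)$ (realized as permutation matrices). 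There is no real obstacle here beyond bookkeeping; the only point that deserves a sentence of care is making sure the $\kk GL(V)$-module structure on $\Tor^R_i(M,\kk)$ that appears in Corollary~\ref{main-tor-corollary} agrees, upon restriction, with the $\kk\symm_{\mathbf{a}}$-structure supplied by the equivariant version of Proposition~\ref{Tor-as-semigroup-poset-homology}, which follows because both structures are induced from the permutation action of $\symm_n$ on monomials in $S$.
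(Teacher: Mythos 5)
Your proof is correct and follows exactly the route the paper takes: the paper introduces Corollary~\ref{poset-homology-corollary} with the sentence ``This immediately gives the following corollary of Theorem~\ref{skew-Schur-functor-theorem},'' and the intended argument is precisely to combine the multidegree-by-multidegree identification $\tilde{H}_{i-1}(\Delta(M_{<\mu}),\kk)\cong\Tor^R_i(M,\kk)_\mu$ from Proposition~\ref{Tor-as-semigroup-poset-homology} with the equivariant description of $\Tor^R_i(M,\kk)$ from Corollary~\ref{main-tor-corollary}, then restrict the $GL(V)$-equivariance to the permutation subgroup $\symm_{\mathbf{a}}$ fixing the multidegree. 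Your version simply makes explicit the setup-verification and equivariance bookkeeping that the paper leaves implicit.
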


An important special case occurs by specializing to $\mathbf{a}=(1,1,\ldots,1)=(1^{di+r})$,
so that $\mu=\xx^{\mathbf{a}}=x_1 x_2 \cdots x_{di+r}$, and both the left
and right sides of Corollary~\ref{poset-homology-corollary} have simpler interpretations.
On the right side, letting $m=di+r$,
the $x_1x_2\cdots x_m$-weight space in the $GL(V)$-representation
$\schurfunctor^{\sigma(d^i,r)}$ is the
skew Specht module $\specht^{\sigma(d^i,r)}$ for the symmetric group $\symm_m$, as discussed at the end of Section~\ref{Schur-Weyl-duality-section}.

On the left side, $M_{< \mu}$ is isomorphic to a well-studied poset from the literature on poset topology.  The supports of the squarefree monomials appearing in $M_{< \mu}$ are exactly the
proper subsets $A \subsetneq \{1,2,\ldots,id+r\}$ with cardinality
$|A| \geq r$ and $|A| \equiv r \bmod{d}$.
This poset $M_{< \mu}$ is therefore obtained
from the {\it Boolean algebra} $2^{[m]}$ of all
subsets of $[m]:=\{1,2,\ldots,m\}$ where $m=id+r$,
by selecting the elements whose ranks lie
in $\{r,r+d,r+2d,\ldots, r+(i-1)d\}$.
Thus one can rephrase this special case 
of Corollary~\ref{poset-homology-corollary}
as asserting that the following:

\begin{cor}
\label{squarefree-weight-poset-special-case-cor}
The order complex $\Delta$
for the rank-selected subposet of $2^{[m]}$ with $m=di+r$
allowing only the subsets whose ranks lie in $\{r,r+d,r+2d,\ldots, r+(i-1)d\}$
has $\tilde{H}_j(\Delta,\kk)=0$
unless $j=i-1$, in which case $\tilde{H}_{i-1}(\Delta,\kk) \cong \specht^{\sigma(d^i,r)}$ as an $\symm_m$-representation.
\end{cor}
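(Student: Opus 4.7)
The plan is to derive this corollary directly from Corollary~\ref{poset-homology-corollary} applied to the multidegree $\mathbf{a} = (1,1,\ldots,1) \in \NN^m$ with $m = di+r$, so that $\mu = \xx^{\mathbf{a}} = x_1 x_2 \cdots x_m$. The hypotheses $|\mathbf{a}| \geq r$ and $|\mathbf{a}| \equiv r \bmod d$ are automatic since $|\mathbf{a}| = m = di+r$. The statement then reduces to two routine identifications.

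First, I would identify the poset $M_{<\mu}$ with the rank-selected subposet of $2^{[m]}$ described in the corollary. Since $\mu$ is squarefree, any $\mu' \in M$ with $\mu' <_R \mu$ divides $\mu$ and is hence itself squarefree, so it corresponds to a subset $A \subseteq [m]$ via its support. Combining the membership condition $\mu' \in M$ (giving $|A| \geq r$ and $|A| \equiv r \bmod d$) with the strict inequality $\mu' <_R \mu$ (giving that $m - |A|$ is a positive multiple of $d$) forces $|A| \in \{r, r+d, \ldots, r+(i-1)d\}$. The divisibility order $\leq_R$ on squarefree monomials restricts to ordinary subset inclusion, so the order complexes coincide.

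Second, I would identify the right-hand side as an $\symm_m$-module. The stabilizer $\symm_{\mathbf{a}}$ of $\mathbf{a} = (1^m)$ is the full symmetric group $\symm_m$, and by the Schur--Weyl duality recalled in Section~\ref{Schur-Weyl-duality-section}, the squarefree $x_1 \cdots x_m$-weight space of $\schurfunctor^{\sigma(d^i,r)}(V)$ is, as an $\symm_m$-representation, precisely the skew Specht module $\specht^{\sigma(d^i,r)}$. The vanishing of $\tilde{H}_j(\Delta,\kk)$ for $j \neq i-1$ then transfers directly from the vanishing statement in Corollary~\ref{poset-homology-corollary}. I anticipate no serious obstacle: this is essentially a dictionary translation of the already-established Corollary~\ref{poset-homology-corollary} into the squarefree multidegree, exactly as foreshadowed in the paragraph preceding the statement.
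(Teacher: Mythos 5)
Your proposal is correct and follows the same route the paper takes: specialize Corollary~\ref{poset-homology-corollary} to $\mathbf{a}=(1^m)$, identify the poset $M_{<\mu}$ with the stated rank-selected subposet of $2^{[m]}$ (noting that $\leq_R$ restricts to subset inclusion because the $\bmod\,d$ degree condition is automatic among elements of $M$ dividing $\mu$), and identify the squarefree weight space of $\schurfunctor^{\sigma(d^i,r)}$ with the skew Specht module $\specht^{\sigma(d^i,r)}$ via Schur--Weyl duality. This is exactly the derivation the paper sketches in the two paragraphs immediately preceding the corollary.
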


Corollary~\ref{squarefree-weight-poset-special-case-cor} is
also an instance of a result\footnote{One also needs the $S_n$-representation isomorphism
$
\specht^{\sigma(r,d^i)} \cong
\specht^{\sigma(d^i,r)},
$
an instance of the more general isomorphism
$
\specht^{\sigma(\alpha)} \cong \specht^{\sigma(\rev(\alpha))}
$
where
$\rev(\alpha)=(\alpha_\ell,\alpha_{\ell-1},\dots,\alpha_2,\alpha_1)$.
The latter
follows, e.g., from Theorem~\ref{Solomon-ribbon-result}
using the poset anti-automorphism $A \mapsto \{1,2,\ldots,m\} \setminus A$ to the Boolean algebra $2^{[m]}$.} of
Solomon \cite[\S 6]{Solomon}, calculating the homology of all rank-selected subposets of a Boolean algebra $2^{[m]}$, which has  been re-examined many times; see Wachs \cite[\S3.4]{Wachs}. Solomon's result says the following \cite[Thm. 3.4.4]{Wachs}.

\begin{thm}
\label{Solomon-ribbon-result}
For any composition $\alpha=(\alpha_1,\ldots,\alpha_\ell)$ of $m$,
the order complex $\Delta$ of the
rank-selected subposet
of $2^{[m]}$ allowing only the subsets whose ranks lie in the partial sums
$$
\{\alpha_1, \,\, 
\alpha_1+\alpha_2, \,\, 
\alpha_1+\alpha_2+\alpha_3,\,\,
\ldots,\,\,
\alpha_1+\cdots+\alpha_{\ell-1}\}
$$
will have $\tilde{H}_i(\Delta;\kk)=0$ unless $i=\ell-2$, while
$
\tilde{H}_{\ell-2}(\Delta;\kk)
\cong 
\specht^{\sigma(\alpha)}
$
as an $\symm_m$-representation.
\end{thm}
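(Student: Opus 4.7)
The plan is to deduce Solomon's theorem from the Hamel--Goulden categorification of Theorem~\ref{lem:HGcatComplex}, applied to the specific sequence $\underline{\alpha} := ((\alpha_1),(\alpha_2),\ldots,(\alpha_\ell))$ of single-row compositions. For this choice one has $\schurfunctor^{\sigma((\alpha_i))} = S^{\alpha_i}(V)$, so for any $I \subseteq [\ell-1]$ the term $\bbs^{\underline{\alpha}(I)}$ is a tensor product of symmetric powers $S^{\gamma_1}(V) \otimes_\kk \cdots \otimes_\kk S^{\gamma_{\ell-|I|}}(V)$, whose exponents form the composition $\alpha(I)$ obtained by summing the entries of $\alpha$ within the blocks merged by the $\odot$ operations. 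Theorem~\ref{lem:HGcatComplex} then provides an exact coresolution
\[ 0 \to \bbs^{\sigma(\alpha)} \to \HHH_0(\underline{\alpha}) \to \HHH_1(\underline{\alpha}) \to \cdots \to \HHH_{\ell-1}(\underline{\alpha}) \to 0. \]

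Next I would take $V = \kk^m$ and extract the $x_1 x_2 \cdots x_m$-weight space. This is an exact operation (projection onto a weight summand) and it is $\symm_m$-equivariant for the permutation action on $x_1, \ldots, x_m$. By the Schur--Weyl dictionary recalled in Section~\ref{Schur-Weyl-duality-section}, the weight space of $\bbs^{\sigma(\alpha)}$ is the skew Specht module $\specht^{\sigma(\alpha)}$, while the weight space of $\bbs^{\underline{\alpha}(I)}$ is the permutation module $M^{\alpha(I)} = \operatorname{Ind}_{\symm_{\alpha(I)}}^{\symm_m} \kk$, whose natural basis is the set of ordered set partitions of $[m]$ of composition type $\alpha(I)$.

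The core of the argument is then the identification of this weight-space complex with a sign twist of the reduced simplicial chain complex of $\Delta(P_S)$, where $P_S$ is the rank-selected subposet of the statement. A chain $A_1 \subsetneq \cdots \subsetneq A_{k+1}$ in $P_S$ yields a pair $(R,\pi)$ consisting of the subset $R \subseteq [\ell-1]$ of selected ranks and the ordered set partition $\pi := (A_1, A_2 \setminus A_1, \ldots, [m] \setminus A_{k+1})$, whose composition type is $\alpha([\ell-1] \setminus R)$. Setting $I := [\ell-1] \setminus R$ bijects simplicial chains in degree $k$ with weight-space basis elements in cohomological degree $\ell - 2 - k$, and both differentials implement the same combinatorial operation on $\pi$: deletion of a rank from a chain and replacement of a comma by $\odot$ each merge two consecutive blocks of $\pi$, realized at the level of representations by the multiplication $S^a \otimes_\kk S^b \to S^{a+b}$. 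Acyclicity of the Hamel--Goulden complex outside degree $0$ thus forces concentration of reduced simplicial homology of $\Delta(P_S)$ in top degree $\ell-2$, and $H_0$ supplies the isomorphism $\tilde{H}_{\ell-2}(\Delta;\kk) \cong \specht^{\sigma(\alpha)}$ as $\symm_m$-representations.

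The main obstacle is the sign bookkeeping needed to match the two differentials exactly: the Hamel--Goulden sign $\sgn(I,J) = (-1)^m$ records the position of the new element $j_m$ inside $J$, whereas the simplicial sign $(-1)^{s-1}$ records the position of the deleted rank $r_s$ in the chain, and these signs differ on the $M^{\alpha(I)}$ summand by $(-1)^{r_s+1}$. I expect to reconcile them by a global sign twist multiplying each summand by $(-1)^{f(I)}$ with $f(I) := \sum_{j \in I}(j-1)$, since the incremental change of $f$ upon adding $r_s$ to $I$ is exactly $r_s - 1$. Aside from this bookkeeping, the argument reduces cleanly to Theorem~\ref{lem:HGcatComplex} and Schur--Weyl duality.
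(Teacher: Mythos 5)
Your proposal is correct, but it is not an alternative to a proof in the paper --- the paper does not prove Theorem~\ref{Solomon-ribbon-result} at all. The result is stated as a citation (Solomon \cite{Solomon}, presented as \cite[Thm.~3.4.4]{Wachs}) and is used only to corroborate the special case in Corollary~\ref{squarefree-weight-poset-special-case-cor}, which the paper derives independently from Theorem~\ref{skew-Schur-functor-theorem} via the semigroup-poset homology of Proposition~\ref{Tor-as-semigroup-poset-homology}. What you have done is reverse the logical flow: you show that Solomon's theorem in full generality is a formal consequence of the paper's own Theorem~\ref{lem:HGcatComplex}, which the paper proves but never applies in this direction. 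The reduction is sound. Choosing $\underline{\alpha}=((\alpha_1),\dots,(\alpha_\ell))$ makes every $\bbs^{\underline{\alpha}(I)}$ a tensor product of symmetric powers, whose $x_1\cdots x_m$-weight space for $V=\kk^m$ is $\operatorname{Ind}_{\symm_{\alpha(I)}}^{\symm_m}\kk$ with basis the ordered set partitions of type $\alpha(I)$; the complement bijection $I\leftrightarrow R=[\ell-1]\setminus I$ matches these bases with the faces of $\Delta(P_S)$, and both the near-concatenation map $m\colon S^a\otimes_\kk S^b\to S^{a+b}$ and the face-deletion boundary merge the same pair of adjacent blocks. Your sign analysis is also right: if $J=I\sqcup\{r_s\}$ with $r_s$ in 1-indexed position $s$ of $R'=[\ell-1]\setminus I$, then $r_s$ sits in 0-indexed position $r_s-s$ of $J$, so $\sgn(I,J)\cdot(-1)^{s-1}=(-1)^{r_s-1}$, and the twist by $(-1)^{f(I)}$ with $f(I)=\sum_{j\in I}(j-1)$ absorbs this discrepancy exactly. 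Since the weight-space functor is an exact direct-summand projection, exactness of $\HHH(\underline{\alpha})$ outside cohomological degree $0$ transports to vanishing of $\tilde H_k(\Delta;\kk)$ for $k<\ell-2$, while $H^0\cong\specht^{\sigma(\alpha)}$ gives the top homology. The payoff of your route, beyond self-containedness, is conceptual: it identifies the Hamel--Goulden complex $\HHH(\underline{\alpha})$ for single-row data as a $GL(V)$-categorification of the (sign-twisted) augmented simplicial chain complex of a rank-selected Boolean lattice, recovering the classical theorem and clarifying why the paper's Corollary~\ref{squarefree-weight-poset-special-case-cor} had to agree with Solomon's computation.
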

 
\begin{remark} 
On the other hand, as discussed in Section~\ref{Schur-Weyl-duality-section}, when $\kk$ is a field of characteristic zero, one can recover a polynomial $GL(V)$-representation  such as $\Tor_i^R(M,\kk)$ for $R=\veralg{d}, M=\vermod{d}{r}$ from the $\symm_m$-representation on its $x_1 x_2 \cdots x_m$-weight space, where $m=\dim(V)$.  In this way, using Solomon's Theorem~\ref{Solomon-ribbon-result} to deduce Corollary~\ref{squarefree-weight-poset-special-case-cor}
gives yet another alternate proof of Corollary~\ref{main-tor-corollary} over fields of characteristic zero.
\end{remark}

\section{On tensor products, $\Tor$ and $\Hom$ between the modules}
\label{tor-and-ext-between-modules-section}

Our next goal is to compute $\Tor_i^R(M,M')$ and $\Hom_R(M,M')$ where
$$
\begin{aligned}
R &= \veralg{d},\\
M &= \vermod{d}{ r},\\
M' &= \vermod{d}{ r'}.
\end{aligned}
$$

We first dispense with the easy case where $\rank_\kk V =1$, that is, $S=\kk[x]$. This case is something of an outlier,
but easily analyzed, since all of the modules
over $R=\veralg{d}=\kk[x^d]$
are free of rank $1$; that is, $\vermod{d}{ r} =x^r \kk[x^d]$. This immediately implies the following: 

\begin{prop}\label{prop: one-variable-ext-tor}
For $n=1$, the higher derived functors vanish, i.e.,
$$
\Tor^R_i(M,M')
=0
=\ext^i_R(M,M') \quad \text{ for }i \geq 1.
$$
and in the $i=0$ case, one has that both are free $R$-modules of rank one:
$$
\begin{array}{rcccl}
\Tor_0^R(M,M')
&=&x^r \kk[x^d] \otimes_{\kk[x^d]} x^{r'} \kk[x^d]&
\cong&
x^{r+r'} \kk[x^d],\\
\ext^0_R(M,M')
&=&
\hom_{\kk[x^d]} (x^{r}  \kk[x^d] , x^{r'} \kk[x^d] )
&\cong&
x^{r'-r} \kk[x^d] \quad \left( \subset \kk[x,x^{-1}] \right).
\end{array}
$$
\end{prop}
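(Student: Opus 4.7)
The key observation is that when $n = 1$, every $R$-module $\vermod{d}{r} = x^{r}\kk[x^d]$ is free of rank one over $R = \kk[x^d]$, with basis the single generator $x^{r}$. Once this is in hand, the rest is essentially bookkeeping.

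First I would justify the freeness: the $\kk$-linear map $R \to x^r\kk[x^d]$ sending $f \mapsto x^r \cdot f$ is an $R$-module isomorphism, because $\kk[x]$ is a domain (so multiplication by $x^r$ is injective) and the image is by definition $\vermod{d}{r}$. In particular $M$ is a projective and flat $R$-module, so $\Tor_i^R(M, -) = 0$ and $\Ext^i_R(M, -) = 0$ for all $i \geq 1$. This disposes of the higher derived functors.

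For $i = 0$, the tensor product computation follows from the chain of $R$-module isomorphisms
\[
M \otimes_R M' \;=\; x^r\kk[x^d] \otimes_R x^{r'}\kk[x^d] \;\cong\; R \otimes_R R \;\cong\; R \;\cong\; x^{r+r'}\kk[x^d],
\]
where the final isomorphism sends $1 \mapsto x^{r+r'}$; equivalently, the multiplication map $M \otimes_R M' \to \kk[x]$ has image exactly $x^{r+r'}\kk[x^d]$, and is injective by the rank-one freeness on both sides.

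For $\Hom_R(M, M')$, I would use the universal property of the free module $M$: any $R$-homomorphism $\varphi \colon M \to M'$ is determined by $\varphi(x^r) \in M' = x^{r'}\kk[x^d]$, and conversely every such choice extends uniquely. Writing $\varphi(x^r) = x^{r'}h$ with $h \in \kk[x^d]$, one checks $\varphi$ is realized by multiplication by the Laurent monomial $x^{r'-r}h \in \kk[x, x^{-1}]$: for any $m = x^r f \in M$ with $f \in R$, we have $\varphi(m) = f\varphi(x^r) = x^{r'-r}h \cdot m$. The resulting $R$-module isomorphism
\[
\Hom_R(M, M') \;\xrightarrow{\ \sim\ }\; x^{r'-r}\kk[x^d] \;\subset\; \kk[x, x^{-1}], \qquad \varphi \mapsto \varphi(x^r)/x^r,
\]
gives the stated identification. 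There is no real obstacle: the only mild subtlety is to interpret $\Hom$ as a submodule of the Laurent polynomial ring when $r > r'$, but once one allows the denominator $x^r$, the formula is unambiguous.
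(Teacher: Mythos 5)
Your proof is correct and takes essentially the same route as the paper: the paper simply observes that each $\vermod{d}{r} = x^r\kk[x^d]$ is free of rank one over $R=\kk[x^d]$ and declares that this immediately gives the proposition, and your write-up spells out exactly that observation (freeness $\Rightarrow$ projectivity/flatness $\Rightarrow$ vanishing of higher $\Tor$ and $\Ext$) together with the straightforward identifications in the $i=0$ case.
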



\subsection{Tensor products and proof of Theorem~\ref{thm: tensor-product} }
\label{tensor-section}

Recall the statement of the theorem.

\vskip.1in
\noindent
{\bf Theorem ~\ref{thm: tensor-product} .}
{\it
Fix $d,r,r' \geq 1$ and let $R=\veralg{d}$ with the three $R$-modules 
$$
\begin{aligned}
M&=\vermod{d}{ r},\\
M'&=\vermod{d}{ r'}, \\
M''&=\vermod{d}{ r''}, \quad \text{ where }r''=r+r'.
\end{aligned}
$$
\begin{itemize}
\item[(i)]
The multiplication map $M \otimes_R M' \overset{\varphi}{\rightarrow} M''$ gives rise to a $GL(V)$-equivariant short exact sequence of $R$-modules 
$$
0 \to \bbs^{\sigma(r,r')} (-r'') 
\to 
M \otimes_R M' 
\to M'' \to 0
$$
with the $R$-module 
$\bbs^{\sigma(r,r')} (-r'')$ concentrated in degree $r''$,
annihilated by $R_+$.
\item[(ii)] The sequence splits as $R$-modules, giving an $R$-module
isomorphism
$$
M \otimes_R M' \cong M''
\oplus \schurfunctor^{\sigma(r,r')}(-r'').
$$
\item[(iii)]
When $\binom{r+r'}{r}$ lies in $\kk^\times$,
the sequence also splits as
$\gl (V)$-representations.
\end{itemize}
}
\vskip.1in


\begin{remark}
Note that Theorem ~\ref{thm: tensor-product}
is consistent with the description of
$\Tor^R_0(M,M')$ in Proposition~\ref{prop: one-variable-ext-tor},
because $\schurfunctor^{\sigma(r,r')}(V)=0$
when $\dim(V)=1$ and $r,r' \geq 1$.
\end{remark}

\begin{remark}
Theorem ~\ref{thm: tensor-product}
holds for $r=0$ or $r'=0$ assuming
$\schurfunctor^{\sigma(0,r')}=\schurfunctor^{\sigma(r,0)}=0$.
\end{remark}

The essence of
Theorem~\ref{thm: tensor-product} is
the next lemma. For multidegrees $\alpha$ in $\NN^n$ and monomials $\xx^\alpha$, let $|\alpha|:=\sum_{i=1}^n \alpha_i$,
and similarly denote the $\NN$-degree of $\xx^\alpha$
by $|\xx^\alpha|:=|\alpha|$.

\begin{lemma}
\label{lem: unique-multidegrees} 
In the above setting, consider any multidegree $\gamma$ in $\NN^n$ occurring in $M''$ with $|\gamma| > r''$, so that $|\gamma|=r+r'+kd$ with $k \geq 1$.  Then the $\gamma$-homogeneous component of the multiplication
map $(M \otimes_R M')_\gamma \overset{\varphi_\gamma}{\rightarrow} M''_\gamma$ is a $\kk$-module isomorphism.  In other words, one has
$
\xx^\alpha \otimes_R \xx^{\alpha'}
=\xx^\beta \otimes_R \xx^{\beta'}
$
whenever $\alpha+\alpha'=\gamma=\beta+\beta'$.
\end{lemma}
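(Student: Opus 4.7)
The plan is to show that every simple tensor in $(M \otimes_R M')_\gamma$ equals a single canonical element; together with the surjectivity of $\varphi_\gamma$ onto the one-dimensional $M''_\gamma = \kk \cdot \xx^\gamma$, this will give the claimed isomorphism. The only tool available is the $R$-tensor relation
\[
\xx^{\alpha} \otimes \xx^{\alpha'} = \xx^{\alpha - \rho} \otimes \xx^{\alpha' + \rho}
\]
valid for any $\rho \in \NN^n$ with $|\rho| = d$ and $\rho \leq \alpha$ componentwise, provided $|\alpha| \geq r + d$ so that $\xx^{\alpha - \rho} \in M$; it has an analog for sliding in the reverse direction, requiring $\rho \leq \alpha'$ and $|\alpha'| \geq r' + d$.

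First I would reduce to the case $|\alpha| = r$. If $|\alpha| = r + id$ with $i \geq 1$, then $|\alpha| \geq r + d$, so for any $\rho \leq \alpha$ with $|\rho| = d$ one may slide $\rho$ from the left factor to the right, decreasing $|\alpha|$ by $d$. Iterating represents $\xx^\alpha \otimes \xx^{\alpha'}$ by some $\xx^{\tilde{\alpha}} \otimes \xx^{\tilde{\alpha}'}$ with $|\tilde{\alpha}| = r$. After this reduction, $|\tilde{\alpha}'| = |\gamma| - r = r' + kd \geq r' + d$, so the right factor has enough room to accept or release degree-$d$ monomials for the next step.

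The main obstacle is then to show that any two reduced pairs $(\alpha, \alpha')$, $(\beta, \beta')$ with $|\alpha| = |\beta| = r$ and $\alpha + \alpha' = \beta + \beta' = \gamma$ yield the same tensor. Two nonnegative integer vectors $\alpha, \beta \leq \gamma$ of the same total $r$ are linked by a sequence of elementary swaps $\alpha \mapsto \alpha + e_i - e_j$, each of which is valid precisely when $\alpha_j \geq 1$ and $\alpha'_i \geq 1$. I would realize each such swap by a pair of $R$-slides: choose any $\tau \in \NN^n$ with $|\tau| = d - 1$ and $\tau \leq \alpha' - e_i$ (possible since $|\alpha' - e_i| \geq d - 1$), set $\rho := e_i + \tau$ and $\rho' := e_j + \tau$, then slide $\rho$ from $\alpha'$ into $\alpha$ to obtain $(\alpha + \rho, \alpha' - \rho)$, and finally slide $\rho'$ from $\alpha + \rho$ back into $\alpha' - \rho$. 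The net effect on the bidegree is $(\alpha, \alpha') \mapsto (\alpha + e_i - e_j, \alpha' - e_i + e_j)$, and the componentwise inequalities $\rho \leq \alpha'$ and $\rho' \leq \alpha + \rho$ follow routinely from $\alpha_j \geq 1$ and $\alpha'_i \geq 1$. This combinatorial verification is the heart of the argument.

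Combining the reduction with invariance under elementary swaps shows that every $\xx^\alpha \otimes \xx^{\alpha'}$ with $\alpha + \alpha' = \gamma$ represents the same class in $(M \otimes_R M')_\gamma$, so this $\kk$-module is at most one-dimensional; since $\varphi_\gamma$ maps it onto $\kk \cdot \xx^\gamma$, it must be a $\kk$-module isomorphism, which is the stated conclusion.
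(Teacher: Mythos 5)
Your proof is correct, but it takes a genuinely different route from the paper's. Both arguments begin by sliding degree-$d$ factors across the tensor to reduce to the case $|\alpha|=|\beta|=r$. At that point the paper factors out the gcd: writing $\xx^\alpha = \cc\cc'$, $\xx^\beta=\cc\cc''$ with $\cc=\gcd$, observing that $\cc'$ must divide $\xx^{\beta'}$, and then, because $|\cc'|\leq r$ while $|\beta'|=r'+kd$ with $k\geq 1$ (and WLOG $r\leq r'$), there is room to enlarge $\cc'$ to a monomial $\cc'\pp$ of degree divisible by $d$; a four-line chain of equalities then finishes. You instead show the set of simple tensors is connected under a generating set of local moves: connect any two reduced $\alpha,\beta$ by elementary transpositions $\alpha \mapsto \alpha+e_i-e_j$, and realize each transposition by two degree-$d$ slides $\rho=e_i+\tau$, $\rho'=e_j+\tau$ with a shared $\tau$ of degree $d-1$. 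The paper's argument is shorter and avoids an auxiliary connectivity claim; yours is more explicit (giving a concrete sequence of moves) and sidesteps the auxiliary WLOG $r\leq r'$, needing only $r'\geq 1$ and $k\geq 1$. One small point you should make explicit: the assertion that any two $\alpha,\beta\leq\gamma$ with $|\alpha|=|\beta|=r$ are connected by elementary swaps \emph{that stay in the box $[0,\gamma]$} needs a sentence of justification (pick $i$ with $\beta_i>\alpha_i$ and $j$ with $\beta_j<\alpha_j$; then $\alpha_j\geq 1$, $\alpha'_i\geq 1$, the intermediate point stays in the box, and the $\ell_1$-distance to $\beta$ strictly drops). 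With that sentence added, the argument is complete.
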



\begin{proof}
Let $\aa,\aa',\bb,\bb'$ be monomials with
$\aa\otimes_R \aa'$ and $\bb \otimes_R \bb'$ in
$
\left( 
M \otimes_R M'
\right)_\gamma,
$
so that
$
\aa \cdot \aa' = \xx^\gamma = \bb \cdot \bb',
$
and assume $|\gamma|=(r+r')+kd$ with $k\geq 1$.
We wish to show that 
$\aa\otimes_R \aa'=\bb \otimes_R \bb'$.
By moving elements of $R=S^{(d)}$ across the tensor symbol, one may assume without loss of generality
that $|\aa|=|\bb|=r$, 
so $|\aa'|=|\bb'|=r'+kd$.  Also without loss of generality, assume $r\leq r'$.

Let $\cc = \gcd(\aa,\bb)$, and write 
$$
\begin{aligned}
\aa &= \cc\cdot\cc',\\
\bb &= \cc \cdot \cc''.
\end{aligned}
$$
Then $
\aa \cdot \aa' = \bb \cdot \bb'
$
implies 
$\cc'$ divides $\bb'$.  Since $\abs{\cc'}\leq \abs{\aa}=r \leq r'$, while $|\bb'|=r'+kd$, one can express
$\bb'=\cc' \cdot \pp \cdot \qq$
with $\abs{\cc' \cdot \pp} \equiv 0 \bmod{d}$, that is, with $\cc' \cdot \pp$ lying in $R$. Therefore
$$
\begin{aligned}
\bb \otimes_R \bb'
&= \cc \cdot \cc'' \otimes_R \cc' \cdot \pp \cdot \qq\\
&= \cc \cdot \cc'' \cdot \cc' \cdot \pp  \otimes_R \qq\\
&= \aa \cdot \cc'' \cdot \pp \otimes_R \qq\\
&= \aa  \otimes_R \cc'' \cdot \pp \cdot \qq \\
&= \aa \otimes_R \aa'.\qedhere
\end{aligned}
$$ 
\end{proof}

\begin{proof}[Proof of Theorem~\ref{thm: tensor-product} (i).]
Note that both $R$-modules $M \otimes_R M'$
and $M''$ vanish in $\NN$-degrees strictly below $r''=r+r'$.
Lemma~\ref{lem: unique-multidegrees} already shows that
the $GL(V)$-equivariant short exact sequence of $R$-modules
$$
0 \to \ker(\varphi) 
\to 
M \otimes_R M' 
\to M'' \to 0
$$
has $\ker(\varphi)$ vanishing in $\NN$-degrees strictly above
$r''$.  
On the other hand, restricting the sequence to degree exactly $r''$
gives this short exact sequence of $GL(V)$-modules
$$
0 \to \ker(\varphi)_{r''}
\to S^r \otimes_\kk S^{r'} 
\to S^{r+r'}
\to 0.
$$
This shows that 
$\ker(\varphi)_{r''} \cong \bbs^{\sigma(r,r')}(-r'')$
as $GL(V)$-representations, by comparing it to the special case of 
Proposition~\ref{concatenation-near-concatenation-prop}
with  $\alpha=(r),\beta=(r')$.

Lastly, since $\varphi$ is an $R$-module map,
$\ker(\varphi)$ must be an $R$-submodule, and since it is concentrated
in $\NN$-degree $r''$, it must be annihilated by $R_+$.
\end{proof}

Lemma~\ref{lem: unique-multidegrees}
also leads to a somewhat surprising family of 
$R$-module splittings for the multiplication map $M \otimes_R M' \overset{\varphi}{\twoheadrightarrow} M''$.
Order multidegrees $\alpha,\beta$ in $\NN^n$
componentwise by $\beta \leq \alpha$
if $\beta_i \leq \alpha_i$ for $i=1,2,\ldots,n$,
and choose for each $\alpha$ in $\NN^n$
a family of scalars 
$\{ c_{\alpha,\beta}: \beta \leq \alpha \text{ and } |\beta|=r \}$ satisfying this requirement:
\begin{equation}
\label{section-required-equality}
\sum_{\substack{\beta: \beta \leq \alpha,\\|\beta|=r}} 
c_{\alpha,\beta}=1.
\end{equation}
For example, a choice of such
scalars arises by fixing any total ordering $\prec$ on $\NN^n$ and
letting 
$$
c_{\alpha,\beta}=
\begin{cases} 1 & \text{ if }
\beta\text{ is the }\prec\text{-minimum of }\{ \beta': \beta' \leq \alpha \text{ and }|\beta'|=r\},\\
0 & \text{ otherwise}.
\end{cases}
$$

The next lemma then immediately applies assertion (ii) of
Theorem~\ref{thm: tensor-product}.

\begin{lemma}
\label{lem: product-map-split-surjection} 
For any choice of $\{ c_{\alpha,\beta} \}$ as above,
the $\kk$-module map $\psi: M'' \rightarrow M \otimes_R M'$ defined for $\alpha$ in $\NN^n$
with $|\alpha|=r+r'$ by
$$
\psi(\xx^\alpha):=  \sum_{\substack{\beta \leq \alpha\\ |\beta|=r}} c_{\alpha,\beta} \,\, \xx^\beta \otimes_R \xx^{\alpha-\beta}
$$
gives an $R$-module splitting of the multiplication
map 
$
M \otimes_R M' \overset{\varphi}{\twoheadrightarrow} M''.
$
\end{lemma}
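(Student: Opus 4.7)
The plan is to treat $\psi$ as initially defined only on the minimal $R$-generating set $M''_{r+r'} = S^{r+r'}$ and show it extends uniquely to an $R$-linear map on all of $M''$ that splits $\varphi$. First I would verify the splitting identity in degree $r+r'$: for $|\alpha|=r+r'$, the normalization \eqref{section-required-equality} gives
$$\varphi(\psi(\xx^\alpha)) = \sum_{\beta} c_{\alpha,\beta}\, \xx^\beta \cdot \xx^{\alpha-\beta} = \Big(\sum_{\beta} c_{\alpha,\beta}\Big)\xx^\alpha = \xx^\alpha.$$

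The main obstacle will be showing that the $R$-linear extension is well-defined on all of $M''$, and here Lemma~\ref{lem: unique-multidegrees} does the heavy lifting. Working multigraded, this amounts to showing that for each multidegree $\gamma \in \NN^n$ with $|\gamma| = r+r'+kd$ and $k \geq 1$, the element
$$\xx^\rho \cdot \psi(\xx^\alpha) \;=\; \sum_{\beta} c_{\alpha,\beta}\, \xx^{\rho+\beta} \otimes_R \xx^{\alpha-\beta} \;\in\; (M \otimes_R M')_\gamma$$
is independent of the chosen decomposition $\gamma = \rho + \alpha$ with $\rho \in \NN^n$, $|\rho| = kd$, and $|\alpha| = r+r'$. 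Applying $\varphi$ termwise, each summand maps to $\xx^\gamma$, so the whole sum maps to $\sum_{\beta} c_{\alpha,\beta}\,\xx^\gamma = \xx^\gamma$ regardless of the decomposition. Lemma~\ref{lem: unique-multidegrees} asserts that $\varphi_\gamma : (M \otimes_R M')_\gamma \to M''_\gamma$ is a $\kk$-module isomorphism in this degree, so $\xx^\rho \cdot \psi(\xx^\alpha)$ is forced to equal the unique preimage $\varphi_\gamma^{-1}(\xx^\gamma)$, independent of the decomposition.

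Having established well-definedness, the remaining step is routine: $\psi$ extends to a genuine $R$-module homomorphism $M'' \to M \otimes_R M'$, and the splitting identity $\varphi \circ \psi = \mathrm{id}_{M''}$ holds on the generating set $M''_{r+r'}$ by the first paragraph and hence on all of $M''$ by $R$-linearity of both sides.
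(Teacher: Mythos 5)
Your proof is correct and pivots on the same key ingredient as the paper's, namely Lemma~\ref{lem: unique-multidegrees}, but the organization is genuinely a bit different. The paper applies the formula for $\psi$ to every monomial $\xx^\alpha$ in $M''$ (hence uses the normalization \eqref{section-required-equality} for all $\alpha$, not only $|\alpha|=r+r'$), verifies $\varphi\circ\psi=1$ at once, and then checks $R$-linearity directly via $\psi(\xx^\gamma\xx^\alpha)=\xx^\gamma\psi(\xx^\alpha)$, appealing to the ``equal pure tensors'' phrasing of the lemma. You instead define $\psi$ only on the degree-$r''$ generating component $S^{r+r'}$, verify the splitting identity there, and invoke the ``$\varphi_\gamma$ is an isomorphism'' phrasing of the lemma to conclude that $\xx^\rho\cdot\psi(\xx^\alpha)$ is pinned down as $\varphi_\gamma^{-1}(\xx^\gamma)$, so the $R$-linear extension is well-defined and automatically splits $\varphi$. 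Since the two phrasings of Lemma~\ref{lem: unique-multidegrees} are stated as equivalent, the substance is the same; what your route buys is a small economy: you only need the scalars $c_{\alpha,\beta}$ and the condition \eqref{section-required-equality} for $|\alpha|=r+r'$, and the $R$-linearity check is absorbed into defining $\psi$ by extension from generators rather than verified post hoc.
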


\begin{proof}
One has $\varphi \circ \psi=1$ since
$$
\varphi(\psi(\xx^\alpha))
=\sum_{\substack{\beta \leq \alpha\\ |\beta|=r}} c_{\alpha,\beta} \xx^\beta \cdot \xx^{\alpha-\beta}
= \xx^\alpha \sum_{\substack{\beta \leq \alpha\\ |\beta|=r}} c_{\alpha,\beta}
=\xx^\alpha.
$$
What is perhaps surprising is that $\psi$ is  $R$-linear, which one checks as follows.  Using the $\kk$-linearity of $\psi$, it suffices to show for monomials $\xx^\alpha, \xx^\gamma$ in $M'',R$,  that one has an equality
$$
    \psi(\xx^{\gamma} \xx^\alpha )
    =\xx^\gamma \psi(\xx^\alpha). 
$$
If $\xx^\gamma=1$ this is vacuously true.
If $\xx^\gamma \neq 1$, then $|\gamma+\alpha|>r''$, and
the desired equality is
$$
\sum_{\substack{\beta' \leq \gamma+\alpha\\ |\beta'|=r}} c_{\gamma+\alpha,\beta'} \,\, \xx^{\beta'} \otimes_R \xx^{\gamma+\alpha-\beta'}
=
\sum_{\substack{\beta \leq \alpha\\ |\beta|=r}} c_{\alpha,\beta} \,\, \xx^{\gamma+\beta} \otimes_R \xx^{\alpha-\beta}
$$
which follows from
Lemma \ref{lem: unique-multidegrees}
together with \eqref{section-required-equality} for
$\gamma+\alpha$ and $\alpha$.
\end{proof}

Assertion (iii) of Theorem~\ref{thm: tensor-product}
follows from our next lemma, the last of this section.

\begin{lemma}
When $\binom{r+r'}{r}$ lies in $\kk^\times$, 
the $R$-module splitting
$
\psi: M'' \longrightarrow M \otimes_R M'
$
can be chosen to be $GL(V)$-equivariant.
\end{lemma}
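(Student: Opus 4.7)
The plan is to upgrade the $R$-module splitting supplied by Lemma~\ref{lem: product-map-split-surjection} to a $GL(V)$-equivariant one by choosing the scalars $\{c_{\alpha,\beta}\}$ to come from the symmetric-power comultiplication. Recall the natural $GL(V)$-equivariant comultiplication
$$
\Delta : S^{r+r'} \longrightarrow S^r \otimes_\kk S^{r'}, \qquad \xx^\alpha \longmapsto \sum_{\beta \leq \alpha,\, |\beta| = r} \binom{\alpha}{\beta}\, \xx^\beta \otimes \xx^{\alpha-\beta}.
$$
Composing $\Delta$ with multiplication recovers $\binom{r+r'}{r}\cdot\mathrm{id}_{S^{r+r'}}$ by the multinomial theorem, so when $\binom{r+r'}{r}\in\kk^\times$, the map $\tfrac{1}{\binom{r+r'}{r}}\Delta$ is a $GL(V)$-equivariant $\kk$-linear section of the multiplication map $S^r \otimes_\kk S^{r'} \twoheadrightarrow S^{r+r'}$, which agrees with the bottom-degree restriction of $\varphi$.

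I would then invoke Lemma~\ref{lem: product-map-split-surjection} with the choice $c_{\alpha,\beta} := \binom{\alpha}{\beta}/\binom{r+r'}{r}$ in bottom degree $|\alpha| = r+r'$, and any scalars satisfying \eqref{section-required-equality} in higher degrees. The required normalization $\sum_\beta c_{\alpha,\beta} = 1$ in bottom degree is the multi-index Vandermonde identity $\sum_{\beta\leq\alpha,\,|\beta|=r} \binom{\alpha}{\beta} = \binom{|\alpha|}{r}$, obtained by comparing coefficients of $x^r$ in $(1+x)^{|\alpha|} = \prod_i (1+x)^{\alpha_i}$. The particular scalars chosen in higher degrees turn out to be immaterial, because Lemma~\ref{lem: unique-multidegrees} forces all decompositions $\xx^\beta \otimes_R \xx^{\alpha-\beta}$ of a fixed multidegree greater than $r''$ to coincide in $M \otimes_R M'$, so the value of $\psi(\xx^\alpha)$ is independent of the choice.

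The final step is to propagate equivariance from the bottom degree. By construction $\psi|_{S^{r''}} = \tfrac{1}{\binom{r+r'}{r}}\Delta$ is $GL(V)$-equivariant, and $M''$ is generated as an $R$-module by $S^{r''}$. Since the $R$- and $GL(V)$-actions on $M''$ and $M \otimes_R M'$ are compatible in the sense of Section~\ref{equivariant-resolutions-section}, $R$-linearity of $\psi$ combined with equivariance on generators yields, for $g \in GL(V)$, $a \in R$, $s \in S^{r''}$,
$$
\psi(g(as)) = \psi(g(a)\, g(s)) = g(a)\, \psi(g(s)) = g(a)\, g(\psi(s)) = g(a\,\psi(s)) = g(\psi(as)),
$$
so $\psi$ is $GL(V)$-equivariant throughout $M''$. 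The one potentially subtle point I anticipated was the well-definedness of the $R$-linear extension despite arbitrary higher-degree scalars, but Lemma~\ref{lem: unique-multidegrees} has already absorbed that difficulty, so the remainder of the argument is essentially bookkeeping.
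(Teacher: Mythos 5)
Your proof is correct and establishes the same splitting the paper constructs, but your route to equivariance is genuinely different and arguably cleaner. Both arguments define $\psi$ in degree $r''$ as $\tfrac{1}{\binom{r+r'}{r}}\Delta$ and leave the higher-degree scalars essentially arbitrary (your observation that Lemma~\ref{lem: unique-multidegrees} makes those choices immaterial is exactly right). The divergence is in how $GL(V)$-equivariance in degrees $> r''$ is obtained. The paper verifies it directly: it expands $g(\psi(\xx^\alpha))$ and $\psi(g(\xx^\alpha))$ as sums over multidegree decompositions and invokes Lemma~\ref{lem: unique-multidegrees} a second time to show the two sums agree. You instead observe that Lemma~\ref{lem: product-map-split-surjection} has already delivered $R$-linearity of $\psi$, that $M''$ is generated over $R$ by $S^{r''}$, and that the $G$- and $R$-actions are compatible; the chain
\[
\psi(g(as)) = \psi(g(a)g(s)) = g(a)\,\psi(g(s)) = g(a)\,g(\psi(s)) = g(a\,\psi(s)) = g(\psi(as))
\]
then propagates equivariance from degree $r''$ to all of $M''$ by $\kk$-linearity, since $M''$ is $\kk$-spanned by products $as$ with $a\in R$ and $s\in S^{r''}$. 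What this buys you is that you reuse Lemma~\ref{lem: unique-multidegrees} once, packaged inside the splitting lemma, rather than re-deriving its consequence in the higher-degree computation; the tradeoff is that the paper's version is more self-contained at that point and makes the role of unique factorization in $M\otimes_R M'$ explicit a second time. Both are sound; yours is the more conceptual organization of the same ingredients.
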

\begin{proof}
We will define $\psi$ on monomials
$\xx^\alpha$ in $M''=\vermod{d}{r''}$, and extend $\kk$-linearly. 

\vskip.1in
\noindent
{\sf Case 1.} $|\alpha|> r''$.
Define 
$\psi(\xx^\alpha):=\xx^\beta \otimes_R \xx^{\alpha-\beta}$, where 
$\beta \leq \alpha$ has $|\beta|=r$ and is otherwise arbitrary, e.g., $\beta$ is the $\prec$-minimum of $\{\beta':\beta' \leq \alpha \text{ and }|\beta|=r\}$ for a total order $\prec$ on $\NN^n$.

\vskip.1in
\noindent
{\sf Case 2.} $|\alpha|= r''$.
Define 
\begin{equation}   \label{natural-choice-of-section-constants} 
\psi(\xx^\alpha)=
\frac{1}{\binom{r+r'}{r}}
\sum_{\substack{\beta \leq \alpha:\\ |\beta|=r}} \binom{\alpha_1}{\beta_1} \binom{\alpha_2}{\beta_2} \cdots \binom{\alpha_n}{\beta_n} \xx^\beta \otimes_R \xx^{\alpha-\beta}.
\end{equation}

One readily checks that condition \eqref{section-required-equality} holds
in Case 1. For Case 2, use the identity 
$$
\binom{r+r'}{r}=\sum_{\substack{\beta \leq \alpha:\\ |\beta|=r}}\binom{\alpha_1}{\beta_1} \binom{\alpha_2}{\beta_2} \cdots \binom{\alpha_n}{\beta_n}
$$
coming from expanding both sides of 
$(1+t)^{r+r'} = (1+t)^{\alpha_1}(1+t)^{\alpha_2} \cdots (1+t)^{\alpha_n}$
binomially, and comparing coefficients of $t^r$.
Thus by Lemma~\ref{lem: product-map-split-surjection}, this $\psi$ defines an $R$-module splitting.

To check that this splitting $\psi$ is also $GL(V)$-equivariant, one
can check it separately in each $\NN$-degree $m \geq r''$.
We distinguish the same two cases as before.
\vskip.1in
\noindent
{\sf Case 1.}  $m > r''$.
It suffices to check for monomials $\xx^\alpha$ with
$|\alpha|>r''$ and $g$ in $GL(V)$, that
$$
g(\psi(\xx^\alpha))=\psi(g(\xx^\alpha)).
$$
Here $\psi(\xx^\alpha)=\xx^\beta \otimes_R \xx^{\alpha-\beta}$
for some $\beta$ with $|\beta|=r$.  Naming the coefficients $c_\gamma,d_\delta$
in $\kk$ appearing in these unique expansions
$$
\begin{aligned}
g(\xx^\beta)&=\sum_{|\gamma|=r} c_\gamma \xx^\gamma,\\
g(\xx^{\alpha-\beta})&=\sum_{|\delta|=r'} d_\delta \xx^\delta,
\end{aligned}
$$
one finds that
\begin{equation}
    \label{g-psi-summation}
g(\psi(\xx^\alpha)) 
=g(\xx^\beta \otimes_R \xx^{\alpha-\beta})
=g(\xx^\beta) \otimes_R g(\xx^{\alpha-\beta})
= \sum_{|\epsilon|=r''} 
\sum_{\substack{\gamma+\delta=\epsilon\\|\gamma|=r\\|\delta|=r'}}
c_\gamma d_\delta \,\, \xx^\gamma \otimes_R \xx^\delta.
\end{equation}
On the other hand, one also finds that
$$
g(\xx^\alpha) =
g(\xx^\beta \cdot \xx^{\alpha-\beta})
=g(\xx^\beta) \cdot g(\xx^{\alpha-\beta})
=\sum_{|\epsilon|=r''} 
\left( \sum_{\substack{\gamma+\delta=\epsilon\\|\gamma|=r\\|\delta|=r'}}
c_\gamma d_\delta \right) \xx^\epsilon
$$
and therefore 
\begin{equation}
    \label{psi-g-summation}
\psi(g(\xx^\alpha))
= \sum_{|\epsilon|=r''}
\left( \sum_{\substack{\gamma+\delta=\epsilon\\|\gamma|=r\\|\delta|=r'}}
c_\gamma d_\delta \right) \psi(\xx^\epsilon).
\end{equation}
Lemma~\ref{lem: unique-multidegrees} shows that
\eqref{g-psi-summation} and \eqref{psi-g-summation} are equal.

\vskip.1in
\noindent
{\sf Case 2.}  $m = r''$.
One can check that the formula \eqref{natural-choice-of-section-constants}
was chosen so as to make the splitting $\psi$ have its homogeneous component 
$
\psi_{r''}: M''_{r''} \longrightarrow (M \otimes_R M')_{r''}
$
equal to $\frac{1}{\binom{r+r'}{r}}$ times the following composite of
$GL(V)$-equivariant maps:
$$
\begin{array}{cccccccc}
S^{r+r'} &\hookrightarrow & S &\overset{\Delta}{\longrightarrow}
& S \otimes_\kk S &\twoheadrightarrow &S^r \otimes_\kk S^{r'}\\
& & \Vert &  &   \Vert& &\\
 & & \displaystyle\bigoplus_d S^d &  &  \displaystyle \bigoplus_{i,j} S^i \otimes_\kk S^{j}& &
\end{array}
$$
Here $\Delta$ in the middle is the comultiplication in
the usual bialgebra structure on $S=S(V)$, defined on the
algebra generators $x_i$ in $V$ via $\Delta(x_i):=1 \otimes_\kk x_i + x_i \otimes_\kk 1$.
\end{proof}

\begin{example}
When $d=2$ and $r=r'=1$, one has $R=S^{(2)}$
and $M=M'=\vermod{2}{1}$,  $M''=\vermod{2}{2}=S^{(2)}_+$, with $\bbs^{\sigma(1,1)}=\wedge^2$.  Here the exact sequence of Theorem~\ref{thm: tensor-product} is
$$
0 \to \wedge^2(-2) \to 
\vermod{2}{1} \otimes_{\veralg{2}} \vermod{2}{1} \to S^{(2)}_+ \to 0,
$$
which for any $\kk$ is an exact sequence of
$GL(V)$-representations, and a split exact sequence of $\veralg{2}$-modules.  However, if $\kk$ has characteristic $2$ and $\dim(V)\geq 2$, it does
not split as $GL(V)$-representations
in its $\NN$-degree $2$, where
it gives this well-known non-split sequence:
$$
0 \to \wedge^2(V) \to 
V \otimes_\kk V \to 
S^2(V) \to 0.
$$
\end{example}

\subsection{Higher Tor and proof of Theorem~
\ref{higher-tor-is-all-socle-and-Artinian}.}
\label{tor-between-the-modules-subsection}
We recall the statement of the theorem.

\vskip.1in
\noindent
{\bf Theorem~\ref{higher-tor-is-all-socle-and-Artinian}.}
{\it
Fix $d,r,r' \geq 1$,
and let $R,M,M'$ denote
$\veralg{d}, \vermod{d}{ r}, \vermod{d}{ r'}$, as usual.
Then for $i \geq 1$, the $R$-module $\Tor^R_i(M,M')$ is annihilated by $R_+$, and as a module over $\kk=R/R_+$, has a $GL(V)$-isomorphism
$$
\Tor^R_i(M,M')
\cong
\schurfunctor^{\sigma(r,d^i,r')}.
$$
}
\vskip.1in

\begin{proof}
Resolve $M'$ over $R$ as in Theorem~\ref{skew-Schur-functor-theorem}, and apply $M \otimes_R (-)$ to give a complex
$$
0 
\leftarrow M \otimes_\kk \schurfunctor^{(r)}
\leftarrow M \otimes_\kk \schurfunctor^{\sigma(d,r')}
\leftarrow M \otimes_\kk \schurfunctor^{\sigma(d,d,r')}
\leftarrow \cdots
$$
whose homology computes $\Tor^R_i(M,M')$.
The $i^{th}$ term $M \otimes_\kk \bbs^{\sigma(d^i,r')}$ vanishes in degrees
below $di+r+r'$.  For $m \geq 0$, the
degree ${d(i+m)+r+r'}$ component of the 
boundary map
$$
\begin{array}{ccc}
(
M \otimes_\kk \bbs^{\sigma(d^i,r')}
)_{d(i+m)+r+r'}&
\xrightarrow[]{(\partial_i)_{d(i+m)+r+r'}}
&
(
M \otimes_\kk \bbs^{\sigma(d^{i-1},r')}
)_{d(i+m)+r+r'}\\
\Vert& &\Vert \\
S^{dm+r} \otimes_\kk
\bbs^{\sigma(d^i,r')} & &
S^{d(m+1)+r} \otimes_\kk
\bbs^{\sigma(d^{i-1},r')}
\end{array}
$$
may be identified with the component
$\partial^{(d^i,r')}_{d(i+m)+r+r'}$
within the complex of ribbons.

In particular, if $m=0$, 
then Lemma \ref{lem:ribbonCxLemma}(i) gives
a $GL(V)$-isomorphism
$$
\ker \partial^{(d^i,r)}_{di + r + r'} \cong \bbs^{\sigma(r,d^i,r')} 
$$
and for $m \geq 1$, Lemma \ref{lem:ribbonCxLemma}(ii)
shows that 
$$
\ker \partial^{(d^i,r')}_{d(i+m)+r+r'} = \im \partial^{(d^{i+1},r')}_{d(i+m)+r+r'}.
$$
Thus $\Tor_i^R(M,M')
\cong \bbs^{\sigma(r,d^i,r')} $, which is
pure of degree $di+r+r'$ and annihilated by $R_+$.
\end{proof}

\subsection{Hom and proof of Theorem~\ref{ext-zero-theorem}.}
\label{hom-section}
Recall the statement of the theorem.

\vskip.1in
\noindent
{\bf Theorem~\ref{ext-zero-theorem}.}
{\it
Assume $n \geq 2$, so that $S=\kk[x_1,\ldots,x_n]$
is not univariate.  Fix an integer $d \geq 1$, 
defining $R=\veralg{d}$.
For $r, r' \geq 0$,
consider three $R$-modules 
$$
\begin{aligned}
M&=\vermod{d}{ r},\\
M'&=\vermod{d}{ r'}, \\
M''&=\vermod{d}{ r''},
\end{aligned}
$$
defining
$r'':= r'-r$ if $r \leq r'$,
otherwise if $r>r'$, defining $r''$ to be the unique integer in $[0,d)$ congruent to $r'-r \bmod{d}$.
Then one has a $GL(V)$-equivariant $R$-module isomorphism 
$$
\begin{array}{rcl}
M'' &\longrightarrow &\hom_R (M , M' )\\
m'' &\longmapsto & (m \mapsto m''\cdot m).
\end{array}
$$
}
\vskip.1in

Note that this map sending $m''$ to multiplication by $m''$ 
is $GL(V)$-equivariant and an $R$-module
map, and it is always {\it injective}.
What is not obvious is its {\it surjectivity}.
We build up the proof of
the theorem with a few lemmas.

\begin{lemma}\label{lem: tensorIsoHom}
For any integers $r,r',r'' \geq 0$, there is an isomorphism
$$\hom_R (\vermod{d}{r+r'} , \vermod{d}{r''}) \cong \hom_R (\vermod{d}{r} \otimes_R \vermod{d}{r'} , \vermod{d}{r''}).$$
\end{lemma}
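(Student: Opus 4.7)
The plan is to deduce this lemma directly from Theorem~\ref{thm: tensor-product}(ii) together with a short torsion-freeness argument. Specifically, Theorem~\ref{thm: tensor-product}(ii) provides an $R$-module isomorphism
$$
\vermod{d}{r} \otimes_R \vermod{d}{r'} \;\cong\; \vermod{d}{r+r'} \;\oplus\; \bbs^{\sigma(r,r')}(V)(-(r+r')),
$$
valid when $r,r' \geq 1$; when $r=0$ or $r'=0$, the conclusion of the lemma is immediate since $\vermod{d}{0} = R$ and tensoring with $R$ is the identity.

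Applying the contravariant functor $\hom_R(-\,,\vermod{d}{r''})$ to this direct sum decomposition splits $\hom_R(\vermod{d}{r}\otimes_R\vermod{d}{r'},\vermod{d}{r''})$ as the direct sum of $\hom_R(\vermod{d}{r+r'},\vermod{d}{r''})$ and the ``junk'' term
$$
\hom_R\bigl(\bbs^{\sigma(r,r')}(V)(-(r+r')),\,\vermod{d}{r''}\bigr).
$$
The lemma will therefore follow once I show this junk term vanishes.

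To see the vanishing, recall from Theorem~\ref{thm: tensor-product}(i) that the source module $\bbs^{\sigma(r,r')}(V)(-(r+r'))$ is concentrated in a single degree and is annihilated by $R_+$. Hence for any $R$-linear $\varphi$ into $\vermod{d}{r''}$, the image of every element is annihilated by the whole ideal $R_+$. But $\vermod{d}{r''}$ sits inside $S=\kk[x_1,\ldots,x_n]$, so as an $R$-module it embeds in a polynomial ring; multiplying by any nonzero element of $R$ (for instance, any monomial of degree $d$, which exists since $n\geq 1$) is injective on $\vermod{d}{r''}$. In particular, no nonzero element of $\vermod{d}{r''}$ can be annihilated by all of $R_+$, forcing $\varphi=0$.

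The only step that might feel delicate is the torsion-freeness argument, but it is truly elementary once one remembers that $\vermod{d}{r''} \subseteq S$ sits inside a domain. No obstacle remains; the content of the lemma is really just an unpacking of Theorem~\ref{thm: tensor-product}(ii).
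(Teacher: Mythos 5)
Your proof takes essentially the same route as the paper's: apply $\hom_R(-,\vermod{d}{r''})$ to the split sequence from Theorem~\ref{thm: tensor-product} and observe that the contribution from $\bbs^{\sigma(r,r')}$ vanishes because it is $R_+$-torsion while $\vermod{d}{r''}$ has no such torsion. Your explicit handling of the degenerate cases $r=0$ or $r'=0$ (where Theorem~\ref{thm: tensor-product} as stated requires $r,r'\geq 1$) is a small but worthwhile addition.

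One caveat on the torsion-freeness step: you justify it by saying $\vermod{d}{r''} \subseteq S$ ``sits inside a domain,'' and that ``multiplying by any nonzero element of $R$ is injective.'' Both of these are false in the paper's generality, where $\kk$ is an arbitrary commutative ring with $1$ and may have zero divisors, so $S=\kk[x_1,\ldots,x_n]$ need not be a domain. Fortunately the parenthetical example you give already supplies the correct fix: multiplication by a fixed \emph{monomial} of degree $d$, say $x_1^d \in R_+$, is injective on $S$ (and hence on $\vermod{d}{r''}$) regardless of $\kk$, simply by comparing monomial coefficients. That single monomial suffices to show no nonzero element of $\vermod{d}{r''}$ is annihilated by all of $R_+$, which is all the argument needs. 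The paper's proof leaves the torsion-freeness of the target implicit; your version is more explicit but should invoke the monomial argument rather than domainhood of $S$.
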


\begin{proof}
Apply $\hom_R ( - , \vermod{d}{r''})$ to the split exact sequence from Theorem~\ref{thm: tensor-product}(i)
$$0 \to \bbs^{\sigma (r,r')}  \to \vermod{d}{r} \otimes_R \vermod{d}{r'} \to \vermod{d}{r+r'} \to 0$$
and note $\hom_R (\bbs^{\sigma (r,r')}  , \vermod{d}{r''}) = 0$ because  $\bbs^{\sigma (r,r')} $ is all $R$-torsion.
\end{proof}

\begin{lemma}\label{lem: grade2} 
For $n:=\dim V \geq 2$, with $0 \leq r < d$ and $i \geq 1$, one has
$$\ext^1_R (\veralg{d} / \vermod{d}{di} , \vermod{d}{r} ) = 0.$$
\end{lemma}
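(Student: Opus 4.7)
The plan is to reduce the statement to showing $\ext^1_R(\kk, \vermod{d}{r}) = 0$, and then establish the latter via a depth computation.

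For the depth calculation, I would first observe that for $0 \leq r < d$, the module $\vermod{d}{r}$ is an $R$-module direct summand of $S$, via the canonical decomposition
$$S = \bigoplus_{r=0}^{d-1} \vermod{d}{r}$$
which sorts homogeneous components of $S$ by residue class mod $d$. Next, $S$ is a finitely generated $R$-module (generated by monomials of degree $< d$), and the sequence $x_1^d, x_2^d, \ldots, x_n^d$ lies in $R_+$ and is a regular sequence on $S$ (being a regular sequence on $S$ itself). Hence $\depth_{R_+} S = n$. Since depth behaves as a minimum over direct sums, this forces $\depth_{R_+} \vermod{d}{r} = n$ as well, and therefore
$$\ext^j_R(\kk, \vermod{d}{r}) = 0 \quad \text{for } j < n.$$
In particular, the hypothesis $n \geq 2$ gives $\ext^1_R(\kk, \vermod{d}{r}) = 0$.

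To reduce the lemma to this vanishing, I would use devissage: induct on $i$, with base case $i=0$ trivial since $\veralg{d}/\vermod{d}{0} = 0$. For the inductive step, consider the short exact sequence of $R$-modules
$$0 \to \vermod{d}{d(i-1)}/\vermod{d}{di} \to R/\vermod{d}{di} \to R/\vermod{d}{d(i-1)} \to 0.$$
The leftmost term is isomorphic to $S_{d(i-1)}$, which is concentrated in a single degree and hence annihilated by $R_+$, so it is a direct sum of copies of the $R$-module $\kk$. Applying $\hom_R(-, \vermod{d}{r})$ yields a long exact sequence whose relevant piece reads
$$\ext^1_R(R/\vermod{d}{d(i-1)}, \vermod{d}{r}) \to \ext^1_R(R/\vermod{d}{di}, \vermod{d}{r}) \to \ext^1_R(\vermod{d}{d(i-1)}/\vermod{d}{di}, \vermod{d}{r}).$$
The leftmost term vanishes by the inductive hypothesis, and the rightmost is a direct sum of copies of $\ext^1_R(\kk, \vermod{d}{r})$, which vanishes by the depth computation above. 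The result follows.

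The main potential obstacle is verifying the depth computation correctly — specifically, ensuring that $x_1^d, \ldots, x_n^d$ really is a regular sequence in $R_+$ on $S$ (it is, since it is a regular sequence on $S$) and that the direct summand decomposition of $S$ preserves depth (which it does, by the standard $\depth(M \oplus N) = \min(\depth M, \depth N)$). Everything else is a routine application of the long exact $\ext$ sequence.
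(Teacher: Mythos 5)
Your proof is correct, and the core ideas match the paper's: both exploit the $R$-module decomposition $S = \bigoplus_{r=0}^{d-1}\vermod{d}{r}$, both use powers of the variables as a regular sequence in $R_+$, and both conclude by relating $\ext$-vanishing to grade/depth $\geq n \geq 2$. The one place you diverge is in the packaging: the paper works directly with $\mathrm{grade}(I,M)$ for $I = \vermod{d}{di} = (R_+)^i$, observing that $x_1^{di},\dots,x_n^{di}$ already lie in $I$ and form an $M$-regular sequence (since $M$ is a Cohen-Macaulay $R$-module), so $\mathrm{grade}(I,M)=n$ and $\ext^1_R(R/I,M)=0$ follows immediately from $\mathrm{grade}(I,M)=\min\{\ell:\Ext^\ell_R(R/I,M)\neq 0\}$. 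You instead reduce to the case $i=1$ by a d\'evissage on the filtration $R \supset R_+ \supset (R_+)^2 \supset \cdots$ and then compute $\depth_{R_+}\vermod{d}{r}$. Your extra inductive step is sound, but the paper's choice of regular sequence $x_j^{di}$ inside $I$ makes the d\'evissage unnecessary; if you want to avoid it, just note that your regular sequence $x_1^d,\dots,x_n^d$ can be replaced by $x_1^{di},\dots,x_n^{di}$, which lands directly in $\vermod{d}{di}$.
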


\begin{proof}
Note $R=\veralg{d}$ has
 $I:=\vermod{d}{di}=(R_+)^i$, the $i^{th}$ power
 of the graded maximal ideal $R_+$.
 Denoting $M:=\vermod{d}{r}$, it suffices to show that
 whenever $0 \leq r < d$ we have
 that $\ext^j_R(R/I,M)=0$ for $0 \leq j \leq n-1$.
 This vanishing is controlled by
 the {\it grade} of $I$ on $M$, denoted $\mathrm{grade}(I,M)$, defined as the maximal
 length $\ell$ of an $M$-regular sequence $(\theta_1,\ldots,\theta_\ell)$ contained in $I$:
 one has (see, e.g., Bruns and Herzog \cite[Thm. 1.2.5]{BrunsHerzog}),  
 $$
 \mathrm{grade}(I,M):=
 \min\{ \ell: \Ext_R^\ell(R/(I, M) \neq 0\}.
 $$
 Now $S$ is a Cohen-Macaulay $R$-module,
 and an $R$-module direct sum
 $
 S=\bigoplus_{r=0}^{d-1} \vermod{r}{d}.
 $
 Each $R$-summand $M=\vermod{d}{r}$
 is therefore also a Cohen-Macaulay $R$-module.
 So $x_1^{di},\ldots,x_n^{di}$ in $I =(R_+)^i$
 give an $M$-regular sequence of length $n$,
 showing $\mathrm{grade}(I,M)=n$.
\end{proof}

\begin{remark}
The hypothesis $0 \leq r < d$ in Lemma~\ref{lem: grade2}
is important. When $r \geq d$, the $R$-modules
$\vermod{d}{r}$ are {\it not} $R$-summands of $S$, and are 
{\it not} Cohen-Macaulay for $n \geq 2$.  They have $R$-module
depth $1$ for $r \geq d$, as observed by Greco and Martino in \cite[Thm. 3.5]{GrecoMartino}.
\end{remark}

The next lemma is the case $r=r'$ in Theorem \ref{ext-zero-theorem}.
\begin{lemma}\label{lem: Hom(r,r) is R}
The inclusion $R \hookrightarrow \hom_R (\vermod{d}{r} , \vermod{d}{r})$ is an isomorphism for all $r \geq 0$.
\end{lemma}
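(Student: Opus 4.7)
The case $r = 0$ is trivial since $\vermod{d}{0} = R$; assume $r \geq 1$. Injectivity of the natural map $R \to \hom_R(M, M)$ is immediate: if $fM = 0$, then in particular $f \cdot x_1^r = 0$ in $S$, and the monomial $x_1^r$ is a nonzerodivisor on the free $\kk$-module $S$, so $f = 0$. The substance is surjectivity: given $\varphi \in \hom_R(M, M)$, we plan to construct a single element $h \in R$ with $\varphi(m) = hm$ for all $m \in M$.

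The core compatibility that $R$-linearity will force is
\[
x_j^r \, \varphi(x_i^r) \;=\; x_i^r \, \varphi(x_j^r) \quad \text{in } S
\]
for any two distinct indices $i, j \in \{1, \ldots, n\}$ --- and here we crucially use $n \geq 2$. We will derive this by evaluating $\varphi$ on a monomial $x^\gamma \in M$ divisible by both $x_i^r$ and $x_j^r$ (for example $x^\gamma = x_i^r x_j^b$ with $b$ a sufficiently large multiple of $d$, so that both $x_j^b$ and $x_i^r x_j^{b-r}$ lie in $R$), factoring $x^\gamma$ through $x_i^r$ and through $x_j^r$ in these two ways, applying $R$-linearity, and cancelling the monomial nonzerodivisor $x_j^{b-r}$. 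Then we invoke the elementary observation, valid over any commutative $\kk$, that an equation $x_j^r a = x_i^r b$ in $S$ forces $a = x_i^r h$ and $b = x_j^r h$ for a common $h \in S$; this is proved by expanding $a$ in powers of $x_i$ and using that $x_j^r$ is a nonzerodivisor on the polynomial subring in the remaining variables. Applied to the displayed identity, this produces a single $h \in S$ satisfying $\varphi(x_i^r) = x_i^r h$ for every $i$.

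To propagate $\varphi(m) = hm$ from the special monomials $x_i^r$ to all of $M$, for a general monomial $x^\alpha \in M$ we multiply by $x_1^{kd} \in R$ with $k$ large enough that $\alpha_1 + kd \geq r$; then $x_1^r$ divides $x_1^{kd} x^\alpha$, and $R$-linearity together with $\varphi(x_1^r) = x_1^r h$ gives $\varphi(x_1^{kd} x^\alpha) = x_1^{kd} x^\alpha h$, so cancelling the nonzerodivisor $x_1^{kd}$ yields $\varphi(x^\alpha) = x^\alpha h$. By $\kk$-linearity on the monomial basis of $M$, $\varphi(m) = hm$ on all of $M$. Finally, $h$ must lie in $R$, because $x_1^r h = \varphi(x_1^r) \in M$ forces every nonzero homogeneous component $h_e$ of $h$ to satisfy $e \equiv 0 \pmod d$ and $e \geq 0$. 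The main technical hurdle will be arranging the coprimality and cancellation step so that it works over an arbitrary commutative coefficient ring $\kk$ rather than a domain; the monomial-expansion argument sidesteps any integrality hypothesis on $\kk$.
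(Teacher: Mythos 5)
Your proof is correct, but it takes a genuinely different and more elementary route than the paper. The paper derives the lemma homologically: for $r \equiv 0 \bmod d$ it applies $\hom_R(-,\vermod{d}{di})$ to the sequence $0 \to \vermod{d}{di} \to R \to Q \to 0$, invokes the grade/depth computation of Lemma~\ref{lem: grade2} (Cohen--Macaulayness of the summands $\vermod{d}{r}$ for $0 \leq r < d$), and closes with the snake lemma; for $r \not\equiv 0 \bmod d$ it descends by induction via Hom--tensor adjunction and Lemma~\ref{lem: tensorIsoHom}. Your argument instead works directly with the monomial $\kk$-basis: you extract the compatibility $x_j^r\varphi(x_i^r) = x_i^r\varphi(x_j^r)$ by evaluating $\varphi$ on a suitable monomial divisible both ways and cancelling a nonzerodivisor, apply the elementary ``Koszul'' observation that $x_j^r a = x_i^r b$ forces a common factorization through $h \in S$ (which you correctly prove over an arbitrary commutative $\kk$ by expanding in powers of $x_i$), propagate $\varphi = h\cdot(-)$ to all monomials by clearing denominators with $x_1^{kd}$, and finally read off $h \in R$ from the degree constraint $x_1^r h \in \vermod{d}{r}$. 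All the individual steps check out, including the needed memberships $x_j^b, x_i^r x_j^{b-r} \in R$ and the divisibility $x_1^r \mid x_1^{kd}x^\alpha$ with quotient in $R$. What the paper's route buys is that the same toolkit (grade lemma, adjunction, Lemma~\ref{lem: tensorIsoHom}) carries through to Lemma~\ref{lem:claim3} and ultimately Theorem~\ref{ext-zero-theorem}; what yours buys is self-containment and an explicit, constructive description of the multiplier $h$, avoiding Cohen--Macaulay machinery entirely. Two small notes: you rightly flag that you need $n \geq 2$ (for the compatibility step), which the lemma statement omits but which is harmless since the $n=1$ case was dispensed with in Proposition~\ref{prop: one-variable-ext-tor} and is trivial ($M$ is free of rank one); and you should observe that the $h$ produced by distinct pairs $(i,j)$ agrees, which follows by cancelling $x_1^r$ from $x_1^r h_{1j} = \varphi(x_1^r) = x_1^r h_{1j'}$.
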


\begin{proof}
We prove this first for $r \equiv 0 \bmod{d}$, and then induct downward for $r \not\equiv 0 \bmod{d}$. 

Fix any $i \geq 1$ with $r=di$, and apply $\hom_R (-, \vermod{d}{di})$ to the short exact sequence
\begin{equation}\label{eq: ses-di}
0 \to \vermod{d}{di} \to R \to Q \to 0.  
\end{equation}
where $R:=\veralg{d}$ and  $Q:=\veralg{d}/\vermod{d}{di}$,
giving a long exact sequence in $\ext$.
Its first term vanishes
$
\hom_R (Q , \vermod{d}{di}) = 0
$
since $Q$ is all $R$-torsion. 
As $\ext_R^1(R,\vermod{d}{di})=0$, one has a short exact sequence
$$0 \to \hom_R (R , \vermod{d}{di}) \to \hom_R (\vermod{d}{di} , \vermod{d}{di}) \to \ext^1_R (Q , \vermod{d}{di}) \to 0.$$
On the other hand, applying  $\hom_R (Q , -) $ to the sequence (\ref{eq: ses-di}) yields 
\begin{equation}
\label{second-long-exact}
\begin{aligned}
0\ra \Hom_R(Q,\vermod{d}{di}) \ra \Hom_R(Q,R)\ra \Hom_R(Q,Q) \\
\ra \Ext^1_R(Q,\vermod{d}{di}) \ra \Ext^1_R(Q,R)\ra \dots
\end{aligned}
\end{equation}
but again $\hom_R(Q,\vermod{d}{di}) = \hom_R(Q,R) = 0$, as $Q$ is  $R$-torsion.  Also $\hom_R(Q,Q) \cong Q$ since $Q$ is a cyclic $R$-module.
Lastly, $\Ext^1_R(Q,R) = 0$ by the $r=0$ case of Lemma~\ref{lem: grade2}.  Thus \eqref{second-long-exact} yields
an isomorphism
$
Q \cong \ext^1_R (Q , \vermod{d}{di}).
$
One can check that this lets one assemble a morphism of 
short exact sequences given by the following commuting diagram:
\[\begin{tikzcd}
	0 & {\hom_R (R , \vermod{d}{di})} & {\hom_R (\vermod{d}{di} , \vermod{d}{di})} & {\ext^1_R (Q , \vermod{d}{di})} & 0 \\
	0 & {\vermod{d}{di}} & {R} & Q & 0
	\arrow[from=2-4, to=2-5]
	\arrow[from=1-4, to=1-5]
	\arrow[from=1-3, to=1-4]
	\arrow[from=1-2, to=1-3]
	\arrow[from=1-1, to=1-2]
	\arrow[from=2-1, to=2-2]
	\arrow["\sim", from=2-2, to=1-2]
	\arrow[hook, from=2-3, to=1-3]
	\arrow["\sim", from=2-4, to=1-4]
	\arrow[from=2-3, to=2-4]
	\arrow[from=2-2, to=2-3]
\end{tikzcd}\]
The outer two vertical maps are isomorphisms, so the snake lemma implies the middle map is an isomorphism, proving the
$r \equiv 0\bmod{d}$ case.

In the case $d(i-1) < r < di$ we proceed by downward induction on $r$.  Apply the (left-exact) functor $\hom_R (\vermod{d}{r} , -)$ to the inclusion $\vermod{d}{r} \hookrightarrow \hom_R (\vermod{d}{1} , \vermod{d}{r+1})$. This induces an inclusion:
\begin{align*}
\hom_R (\vermod{d}{r} , \vermod{d}{r}) \hookrightarrow &\hom_R (\vermod{d}{r} , \hom_R (\vermod{d}{1} , \vermod{d}{r+1})) \\ &\cong \hom_R (\vermod{d}{r} \otimes_R \vermod{d}{1},  \vermod{d}{r+1}),
\end{align*}
where the latter isomorphism is Hom-Tensor adjunction. 
Lemma \ref{lem: tensorIsoHom} gives an isomorphism 
$$\hom_R (\vermod{d}{r} \otimes_R \vermod{d}{1},  \vermod{d}{r+1}) \cong \hom_R (\vermod{d}{r+1} , \vermod{d}{r+1} ),
$$ 
and by induction, $\hom_R (\vermod{d}{r+1} , \vermod{d}{r+1} ) \cong \veralg{d}$. It follows that there are inclusions
$$\veralg{d} \hookrightarrow \hom_R (\vermod{d}{r} , \vermod{d}{r}) \hookrightarrow \veralg{d},$$
and one can check that composing these two inclusions is the identity. 
Thus the inclusion $$\hom_R (\vermod{d}{r} , \vermod{d}{r}) \hookrightarrow \veralg{d}$$ has a right inverse and is hence also a surjection, so the isomorphism follows.
\end{proof}

The next lemma is the case $r'=0$ in Theorem \ref{ext-zero-theorem}.
\begin{lemma}
\label{lem:claim3}
For $\dim V \geq 2$ and any $r \geq 0$, one has that
$$
\hom_R (\vermod{d}{r} , \veralg{d}) \cong \vermod{d}{di-r},
$$
where $i$ is the smallest integer such that $r \leq di$.
\end{lemma}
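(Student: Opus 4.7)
The plan is to reduce the computation to that of $\hom_R(\vermod{d}{di}, \vermod{d}{k})$ for $0 \leq k < d$, combining the short exact sequence involving $\vermod{d}{di}$ with Hom-Tensor adjunction and the earlier lemmas of this subsection. Write $k := di - r \in [0, d)$. The case $r = 0$ is immediate since $\vermod{d}{0} = R$. For $r \geq 1$, the key auxiliary step is that applying $\hom_R(-, \vermod{d}{k})$ to
\[
0 \to \vermod{d}{di} \to R \to R/\vermod{d}{di} \to 0
\]
yields $\hom_R(\vermod{d}{di}, \vermod{d}{k}) \cong \vermod{d}{k}$, realized by multiplication: the $R_+$-torsion quotient $R/\vermod{d}{di}$ admits no nonzero map into the torsion-free $\vermod{d}{k}$, and $\ext^1_R(R/\vermod{d}{di}, \vermod{d}{k})$ vanishes by Lemma~\ref{lem: grade2} (applicable because $k \in [0, d)$ and $n \geq 2$).

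When $r = di$ (so $k = 0$), the auxiliary calculation already is the desired statement. When $d(i-1) < r < di$ (so $k > 0$), chain together the isomorphisms
\begin{align*}
\hom_R(\vermod{d}{r}, R) &\cong \hom_R\bigl(\vermod{d}{r}, \hom_R(\vermod{d}{k}, \vermod{d}{k})\bigr) \\
&\cong \hom_R\bigl(\vermod{d}{r} \otimes_R \vermod{d}{k}, \vermod{d}{k}\bigr) \\
&\cong \hom_R(\vermod{d}{r+k}, \vermod{d}{k}) \\
&\cong \vermod{d}{k},
\end{align*}
where the first substitutes $R \cong \hom_R(\vermod{d}{k}, \vermod{d}{k})$ from Lemma~\ref{lem: Hom(r,r) is R}, the second is Hom-Tensor adjunction, the third is Lemma~\ref{lem: tensorIsoHom}, and the fourth uses the auxiliary calculation with $r + k = di$.

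The main obstacle will be verifying that the composite isomorphism $\vermod{d}{k} \to \hom_R(\vermod{d}{r}, R)$ is the multiplication map $g \mapsto (m \mapsto mg)$ stated in the lemma, rather than merely an abstract isomorphism. I intend to check this by tracing a generator $g \in \vermod{d}{k}$ back through each step: $g$ corresponds to $(m \mapsto mg) \in \hom_R(\vermod{d}{di}, \vermod{d}{k})$, lifts along the split surjection of Theorem~\ref{thm: tensor-product}(ii) to the map $(m \otimes g' \mapsto mgg')$, curries to the $R$-linear map $m \mapsto (g' \mapsto mgg')$, and under the identification $\hom_R(\vermod{d}{k}, \vermod{d}{k}) \cong R$ becomes $m \mapsto mg \in R$, which is exactly the multiplication map in the lemma.
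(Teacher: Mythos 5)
Your proposal is correct and follows essentially the same route as the paper: you use the same three key tools (Lemma~\ref{lem: Hom(r,r) is R}, Hom-Tensor adjunction via Lemma~\ref{lem: tensorIsoHom}, and the vanishing $\ext^1_R(R/\vermod{d}{di},\vermod{d}{k})=0$ from Lemma~\ref{lem: grade2}) in the same chain of isomorphisms. The only cosmetic differences are that you break out the $r=0$ and $r=di$ cases explicitly (the paper's argument handles them uniformly) and that you prove the auxiliary computation $\hom_R(\vermod{d}{di},\vermod{d}{k})\cong\vermod{d}{k}$ before, rather than after, the adjunction chain; your closing remark about tracing the isomorphism to see it is multiplication is a correct observation, though the lemma statement itself only asserts an abstract isomorphism.
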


\begin{proof}
Observe first that
\begingroup\allowdisplaybreaks
\begin{align*}
    \hom_R ( \vermod{d}{r} , \veralg{d}) 
    &\cong \hom_R (\vermod{d}{r} , \hom_R (\vermod{d}{di-r} , \vermod{d}{di-r} ) ) \quad &(\textrm{Lemma~\ref{lem: Hom(r,r) is R}}) \\
    &\cong \hom_R (\vermod{d}{r} \otimes_R \vermod{d}{di-r} , \vermod{d}{di-r} ) \quad &(\textrm{adjunction}) \\
    &\cong\hom_R (\vermod{d}{di} , \vermod{d}{di-r} ) \quad &(\textrm{Lemma~\ref{lem: tensorIsoHom}}).
\end{align*}
\endgroup
Letting $Q:=\veralg{d}/\vermod{d}{di}$ again, and applying $\hom_R (- , \vermod{d}{di-r})$ to the exact sequence
$$
0 \to \vermod{d}{di} \to R \to Q \to 0
$$
yields, as in the proof of Lemma~\ref{lem: Hom(r,r) is R}, the short exact sequence
$$0 \to \vermod{d}{di-r} \to \hom_R (\vermod{d}{di} , \vermod{d}{di-r} ) \to \ext^1_R (Q , \vermod{d}{di-r} ) \to 0.$$
One can apply Lemma~\ref{lem: grade2} to conclude $\ext^1_R (Q , \vermod{d}{di-r} ) = 0$, since the assumption on $i$ implies that $0 \leq di - r < d$.  Thus $\hom_R (\vermod{d}{di} , \vermod{d}{di-r} ) \cong \vermod{d}{di-r}$.
\end{proof}




\begin{proof}[Proof of Theorem \ref{ext-zero-theorem}]
We induct on $r$, with
base cases $r=0,1$ handled separately.

\vskip.1in
\noindent
{\sf The base case $r=0$.} There is not much to
prove, as $r=0$ implies $M=R$ and $r''=r'$ so  $M''=M'$.  Here the theorem says the inclusion $M' \hookrightarrow \hom_R(R,M')$ is an isomorphism.

\vskip.1in
\noindent
{\sf The base case $r=1$.}
When $r=1$ and $r'=0$, the theorem is an instance of Lemma~\ref{lem:claim3}. When $r=1$ and $r'=1$, the theorem is an instance of 
Lemma~\ref{lem: Hom(r,r) is R}.
Consequently, one may assume without
loss of generality that $r' \geq 2$. Choose $i$ to be the smallest integer such that $r' \leq di$. The preceding lemmas yield a string of isomorphisms:
\begin{align*}
    \hom_R (\vermod{d}{1} , \vermod{d}{r'} ) &= \hom_R (\vermod{d}{1} , \hom_R (\vermod{d}{di-r'} , \veralg{d} ) ) \quad &(\textrm{Lemma~\ref{lem:claim3}}) \\
    &\cong \hom_R (\vermod{d}{1} \otimes_R \vermod{d}{di-r'} , \veralg{d} ) \quad &(\textrm{adjunction}) \\
    &\cong \hom_R (\vermod{d}{di-r'+1} , \veralg{d}) \quad &(\textrm{Lemma}~\ref{lem: tensorIsoHom}) \\
    &\cong \vermod{d}{r'-1} \quad &(\textrm{Lemma~\ref{lem:claim3}}).
\end{align*}

\vskip.1in
\noindent
{\sf The inductive step where $r\geq 2$.}
When $r'=0$, the theorem is an instance of Lemma \ref{lem:claim3}, so assume without loss of generality that $r' \geq 1$.  There is then a string of isomorphisms:
\begin{align*}
&\hom_R (\vermod{d}{r} , \vermod{d}{r'} ) & \\
&= \Hom_R(\vermod{d}{r-1}\otimes_R \vermod{d}{1}, \vermod{d}{r'}) &\textrm{(Lemma~\ref{lem: tensorIsoHom})} \\
&\cong \Hom_R(\vermod{d}{r-1}, \Hom_R(\vermod{d}{1}, \vermod{d}{r'})) &\text{(Hom-tensor adjunction)} \\
&\cong \Hom_R(\vermod{d}{r-1}, \vermod{d}{r'-1}) & \textrm{(above $r=1$ base case)} \\
&\cong \vermod{d}{r''} &\text{(inductive hypothesis).}
\end{align*}
\end{proof}

\begin{remark}
Under some extra hypotheses on the scalars $\kk$, one can regard the special case where $0 \leq r,r' \leq d-1$ in Theorem~\ref{ext-zero-theorem}
as an instance of a more general statement,
Proposition~\ref{cor:abelian-auslander}
below.  Let $\kk$ be a field,
and $G$ a finite abelian subgroup of $GL_n(\kk)$ acting on $S=\kk[x_1,\ldots,x_n]$.  
For a {\it linear character} $\chi: G \rightarrow \kk^\times$
define the 
{\it $S^G$-module of relative invariants} in $S$ as follows:
$$
S^{G,\chi}:=\{f \in S: g(f)=\chi(g) f\}.
$$

\begin{prop}
\label{cor:abelian-auslander}
In the above setting with $\kk$ a field, assume furthermore that
\begin{itemize}
    \item 
$\#G$ lies in $\kk^\times$,
\item  $\kk$ contains the $e^{th}$ roots-of-unity, for $e:=\mathrm{lcm}\{ \text{multiplicative orders of }g\text{ in }G\}$, and
\item $G$ contains no pseudo-reflections,
that is, no $g$ with
$\ker(g - I_n)$ a hyperplane in $\kk^n$.
\end{itemize}
Then for any two linear characters $\chi, \chi': G \rightarrow \kk^\times$, defining a
third character $\chi'':=\chi^{-1}\chi'$, the following map
is an $S^G$-module isomorphism:
$$
\begin{array}{rcl}
S^{G,\chi''} &\longrightarrow&
\hom_{S^G}( S^{G,\chi}, S^{G,\chi'})\\
s'' &\longmapsto & (s \mapsto s''\cdot s).
\end{array}
$$
\end{prop}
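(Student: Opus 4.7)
The plan is to deduce Proposition~\ref{cor:abelian-auslander} from the classical theorem of Auslander: under the stated hypotheses on $G$ --- in particular the absence of pseudo-reflections --- the natural $\kk$-algebra homomorphism
\[
\Psi: S \# G \longrightarrow \mathrm{End}_{S^G}(S), \qquad s \otimes g \longmapsto \bigl(f \mapsto s\cdot g(f)\bigr),
\]
is an isomorphism. So my task reduces to decomposing both sides of $\Psi$ via idempotents indexed by $\hat{G}$ and identifying the $(\chi,\chi')$-block with the multiplication map.

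First I would exploit the semisimplicity of $\kk[G]$, guaranteed by the hypotheses that $\#G \in \kk^\times$ and that $\kk$ contains the $e^{th}$ roots of unity. For each linear character $\chi$, the element $e_\chi := \tfrac{1}{\#G}\sum_{g \in G} \chi(g)^{-1} g$ is a well-defined central idempotent in $\kk[G]$, hence also in $S \# G$ via the inclusion $\kk[G] \hookrightarrow S \# G$. Using orthogonality of characters, one verifies $e_\chi \cdot S = S^{G,\chi}$, which gives the $S^G$-module decomposition $S = \bigoplus_{\chi \in \hat{G}} S^{G,\chi}$. Applying $\mathrm{End}_{S^G}$ and grouping by idempotent then identifies $e_{\chi'}\,\mathrm{End}_{S^G}(S)\,e_\chi \cong \hom_{S^G}\bigl(S^{G,\chi},\, S^{G,\chi'}\bigr)$.

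Next I would compute the corresponding block on the skew-group side using the product rule $(s\otimes g)(s'\otimes g') = s\cdot g(s') \otimes gg'$. A direct reindexing calculation yields, for every $s \in S$,
\[
e_{\chi'}\,(s \otimes 1)\,e_\chi \;=\; e_{\chi''}(s)\cdot e_\chi,
\]
where $\chi'' := \chi^{-1}\chi'$. This identifies $e_{\chi'}(S \# G)e_\chi = S^{G,\chi''}\cdot e_\chi$ and gives an $S^G$-module isomorphism $S^{G,\chi''} \xrightarrow{\sim} e_{\chi'}(S \# G)e_\chi$ via $s'' \mapsto s''\cdot e_\chi$.

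Finally, combining these two identifications with Auslander's isomorphism $\Psi$ yields an $S^G$-module isomorphism $S^{G,\chi''} \xrightarrow{\sim} \hom_{S^G}(S^{G,\chi}, S^{G,\chi'})$. Tracing an element $s'' \in S^{G,\chi''}$ shows its image is the endomorphism $f \mapsto s''\cdot e_\chi(f)$, whose restriction to $S^{G,\chi}$ (where $e_\chi$ acts as the identity) is precisely the multiplication map $s \mapsto s''\cdot s$ asserted in the proposition. The main obstacle is citing and applying the correct form of Auslander's theorem, ensuring that the no-pseudo-reflection hypothesis is used in exactly the right way; the remaining idempotent computations are a routine verification once the bookkeeping conventions for $S \# G$ have been fixed.
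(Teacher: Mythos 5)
Your proposal is correct, and it is exactly the argument the paper has in mind: the authors explicitly state that the (omitted) proof uses Auslander's theorem, and your idempotent decomposition of $S \# G$ and $\mathrm{End}_{S^G}(S)$ by the characters in $\hat{G}$, followed by the identification $e_{\chi'}(s\cdot 1)e_\chi = e_{\chi''}(s)\,e_\chi$, is the standard way to extract the $(\chi,\chi')$-block from that isomorphism. The computations you outline (that $e_\chi S = S^{G,\chi}$, that $e_{\chi'}(S\#G)e_\chi = S^{G,\chi''}e_\chi$, and that the resulting map restricts to multiplication on $S^{G,\chi}$) all check out.
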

The proof, which we omit here, uses the following result of
Auslander \cite[Thm. 5.15]{LeuschkeWiegand}
about the {\it skew group ring} $S \# G$,
which is defined to be a free $S$-module on $S$-basis $\{g\}_{g \in G}$, with multiplication defined
for $s,t \in S$ and $g,h \in G$ by
$$
(t \cdot h) \cdot (s \cdot g) := t \cdot h(s) \cdot hg.
$$

\begin{thm}[Auslander's theorem]\label{thm:auslander}
Let $\kk$ be a domain, 
and $G$ a finite subgroup of $GL_n(\kk)$ that contains no pseudo-reflections and has $\#G$ in $\kk^\times$.  Then the following map
is is an isomorphism of $S^G$-algebras, 
and hence also of $S^G$-modules:
$$
\begin{array}{rcl}
S \# G & \longrightarrow &\hom_{S^G} (S , S)\\
s \cdot g &\longmapsto& ( t \mapsto s \cdot g(t) ).
\end{array}
$$
\end{thm}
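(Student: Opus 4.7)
The plan is to show that $\phi : S \# G \to \hom_{S^G}(S, S)$, $s \cdot g \mapsto (t \mapsto s \cdot g(t))$, is an isomorphism by reducing its bijectivity to a check on primes of height at most one in $S^G$, where the no-pseudo-reflections hypothesis turns $S^G \subset S$ into an étale Galois extension. First I would verify that $\phi$ is an $S^G$-algebra homomorphism: composition $(s \cdot g) \circ (t \cdot h)$ sends $u$ to $s \cdot g(t) \cdot gh(u)$, which matches the skew multiplication $(s \cdot g)(t \cdot h) = s \cdot g(t) \cdot gh$. Since $\kk$ is a domain, $S$ is a domain, and $S^G$ is a normal Noetherian domain with $S$ module-finite over it (Hilbert--Noether).

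Next I would establish that source and target are reflexive $S^G$-modules. The module $S$ is maximal Cohen--Macaulay over $S^G$ because $S$ is Cohen--Macaulay and the inclusion $S^G \hookrightarrow S$ is finite; MCM modules over normal domains are reflexive, so $S$ and $S \# G = \bigoplus_{g \in G} S \cdot g$ are reflexive $S^G$-modules, and $\hom_{S^G}(S, S)$ is reflexive as the hom between a finitely generated module and a reflexive one. Consequently $\phi$ is an isomorphism as soon as it is one after localizing at every prime of $S^G$ of height at most one.

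At the zero prime, writing $L := \operatorname{Frac}(S)$ and $K := \operatorname{Frac}(S^G) = L^G$, the extension $L/K$ is Galois with group $G$, and $\phi \otimes_{S^G} K$ becomes the classical Galois isomorphism $L \# G \xrightarrow{\sim} \operatorname{End}_K(L)$: injective by Dedekind's independence of characters and bijective by a $K$-dimension count ($|G|^2 = [L{:}K]^2$ since $[L{:}K] = |G|$). This also gives injectivity of $\phi$ itself. At a height-one prime $\mathfrak{p} \subset S^G$, the no-pseudo-reflections hypothesis forces the inertia group of any prime $\mathfrak{q} \subset S$ lying over $\mathfrak{p}$ to be trivial: an element $g \neq 1$ fixing $\mathfrak{q}$ and acting trivially on $S/\mathfrak{q}$ would have its fixed locus contain the codimension-one subvariety $V(\mathfrak{q})$, making $g$ a pseudo-reflection. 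Combined with flatness of $S_{\mathfrak{p}}$ over the DVR $S^G_{\mathfrak{p}}$, this unramifiedness promotes $S^G_{\mathfrak{p}} \subset S_{\mathfrak{p}}$ to a finite étale Galois extension with group $G$, and for such an extension $\phi_{\mathfrak{p}}$ is an isomorphism by the standard characterization of étale Galois extensions of commutative rings.

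The hard part will be the codimension-one step: one must rigorously link the representation-theoretic assumption of no pseudo-reflections to the ring-theoretic statement that $S/S^G$ is étale in codimension one, which requires a small amount of ramification theory for Galois extensions of regular rings (the fact that inertia subgroups at height-one primes are generated by pseudo-reflections). Once this bridge is in place, the étale Galois characterization together with reflexivity on both sides closes the argument.
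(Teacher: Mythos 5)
The paper does not prove Auslander's theorem; it states it as a black box citing Leuschke--Wiegand \cite[Thm.~5.15]{LeuschkeWiegand} and then only \emph{uses} it (with the proof of Proposition~\ref{cor:abelian-auslander} also omitted). So there is no ``paper's own proof'' for me to compare against. Your argument is the standard textbook proof --- essentially the one in Leuschke--Wiegand --- and it is sound in outline: verify $\phi$ is an $S^G$-algebra map, observe that both $S\#G$ and $\hom_{S^G}(S,S)$ are reflexive $S^G$-modules over the normal Noetherian domain $S^G$, reduce to primes of height $\le 1$, handle height~$0$ via classical Galois theory ($L\#G\cong\operatorname{End}_K(L)$ by Dedekind independence plus dimension count), and handle height~$1$ by showing the no-pseudo-reflection hypothesis forces trivial inertia and hence an \'etale Galois extension after localization.

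A few places where the writeup is looser than what a referee would want. First, the reduction step needs \emph{both} sides to satisfy $(S_2)$, not just the target: you get injectivity of $\phi$ from torsion-freeness of $S\#G$ and the generic-point isomorphism, and then to kill the cokernel (supported in codimension $\ge 2$) you use the long exact sequence of depth, which requires $\operatorname{depth} M_{\mathfrak p}\ge 2$ and $\operatorname{depth} N_{\mathfrak p}\ge 2$ at any height-$\ge 2$ prime; you have asserted reflexivity of both sides, so this is available, but it should be invoked explicitly. Second, in the height-one step, ``trivial inertia implies unramified'' silently uses that the residue extension $\kappa(\mathfrak q)/\kappa(\mathfrak p)$ is separable; this is where $\#G\in\kk^\times$ (tameness) enters and should be flagged, since trivial inertia alone controls only the ramification index. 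Third, the identification of the inertia subgroup at a height-one prime with the pseudo-reflection locus is the genuinely nontrivial input (as you acknowledge); a clean way to phrase it is that $g$ lying in the inertia of $\mathfrak q$ means $(g-1)(x_i)\in\mathfrak q$ for all $i$, so the linear subspace $\ker(g-I)$ has codimension $\le \operatorname{ht}\mathfrak q = 1$, forcing $g=1$ or $g$ a pseudo-reflection. Finally, note the paper states $\kk$ a domain, but your argument (and the cited reference) really needs $\kk$ a field --- or at least enough to make $S^G$ normal Noetherian and $S$ a CM $S^G$-module --- which is harmless here since the paper only applies the theorem over a field.
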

To recover the case $0 \leq r,r' \leq d-1$ in Theorem~\ref{ext-zero-theorem} from
Proposition~\ref{cor:abelian-auslander}, take
$G$ to be the cyclic subgroup of order $d$ given by all of the scalar matrices in $GL_n(\kk)$
having a $d^{th}$-root-of-unity on the diagonal.
When $n \geq 2$, this group $G$ contains no pseudoreflections.
\end{remark}

\begin{remark}
A natural next subject of study after computing $\hom_R (\vermod{d}{r} , \vermod{d}{r'})$ is on the behavior of the higher derived functors $\ext^R_i (\vermod{d}{r} , \vermod{d}{r'} )$ for $i > 0$. It turns out that this case is rather subtle in contrast to the case of $\tor$, and there is no similarly ``clean" statement as in Theorem \ref{ext-zero-theorem}.

There \emph{are} commonalities with the case of $\tor$ worth mentioning: it turns out that the higher $\ext$ modules are also annihilated by $R_+$, and the $\ext$-modules satisfy a more delicate persistence of nonvanishing than the $\tor$-modules. The tools and ideas used to prove these statements, along with some surprising connections between the Yoneda algebra and categorification of certain noncommutative Hopf algebras, will be the subject of a future paper.
\end{remark}

\bibliographystyle{abbrv}
\bibliography{bibliography}

\end{document}